\theoremstyle{plain}
\newtheorem{thm}{Theorem}
\newtheorem{cor}[thm]{Corollary}
\newtheorem{lem}[thm]{Lemma}
\newtheorem{conj}[thm]{Conjecture}
\newtheorem{prop}[thm]{Proposition}
\theoremstyle{definition}
\newtheorem{defn}[thm]{Definition}
\newtheorem{rem}[thm]{Remark}
\newcommand{\tree}[3]{\text{ {$ {\text{\normalsize $ #1$}}-\!\!\!<^{#2}_{#3}$}}}
\renewcommand{\int}{\operatorname{int}}
\newcommand{\Ker}{\operatorname{Ker}}
\newcommand{\id}{\operatorname{id}}
\newcommand{\Artin}{\operatorname{Artin}}
\newcommand{\im}{\operatorname{Im}}
\newcommand{\Hom}{\operatorname{Hom}}
\newcommand{\Z}{\mathbb{Z}}
\newcommand{\J}{\mathbb{J}}
\newcommand{\Y}{\mathbb{Y}}
\newcommand{\z}{{\Z_2}}
\newcommand{\R}{\mathbb{R}}
\newcommand{\LL}{\mathbb{L}}
\newcommand{\W}{\mathbb{W}}
\newcommand{\SW}{\mathbb{SW}}
\newcommand{\SL}{\mathbb{SL}}
\newcommand{\SY}{\mathbb{SY}}
\newcommand{\SJ}{\mathbb{SJ}}
\newcommand{\HC}{\mathbb{HC}}
\newcommand\sL{{\sf L}}
\newcommand\sD{{\sf D}}
\newcommand\sY{{\sf Y}}
\newcommand\sJ{{\sf J}}
\newcommand{\sW}{{\sf W}}
\newcommand{\sSJ}{{\sf SJ}}
\newcommand{\sSY}{{\sf SY}}
\newcommand{\sSW}{{\sf SW}}
\newcommand{\cT}{\mathcal{T}}
\newcommand{\cW}{\mathcal{W}}
\newcommand{\cF}{\mathcal{F}}
\newcommand{\cG}{\mathcal{G}}
\newcommand{\cA}{\mathcal{A}}
\newcommand{\wT}{\widetilde \cT}
\newcommand{\sra}{\twoheadrightarrow}
\newcommand{\ra}{\longrightarrow}
\newcommand{\iinfty}{{\mathchoice
{\begin{minipage}{.15in}\includegraphics[width=.15in]{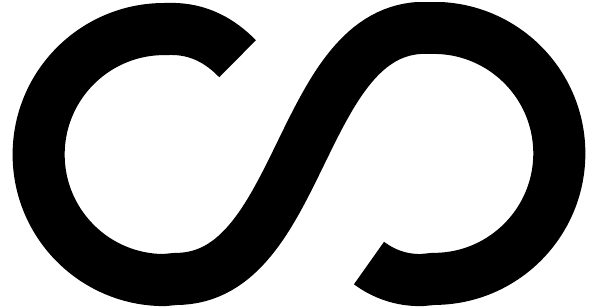}\end{minipage}}
{\begin{minipage}{.13in}\includegraphics[width=.13in]{infty2.pdf}\end{minipage}}
{\begin{minipage}{.11in}\includegraphics[width=.11in]{infty2.pdf}\end{minipage}}
{\begin{minipage}{.08in}\includegraphics[width=.08in]{infty2.pdf}\end{minipage}}
}}
\begin{document}

\title{Geometric filtrations of string links and homology cylinders}

\begin{abstract}
We show that the Artin representation on concordance classes of string links induces a well-defined epimorphism modulo order $n$ twisted Whitney tower concordance, and that the kernel of this map is generated by band sums of iterated Bing-doubles of any string knot with nonzero Arf invariant. We also continue J. Levine's work \cite{L1,L2,L3} comparing two filtrations of the group of homology cobordism classes of $3$-dimensional homology cylinders, one defined in terms of an Artin-type representation (the Johnson filtration) and one defined using clasper surgery (the Goussarov-Habiro filtration). In particular, the associated graded groups are completely classified up to an unknown $2$-torsion summand for the Goussarov-Habiro filtration, for which we obtain an upper bound, in a precisely analogous fashion to the classification of the Whitney tower filtration of link concordance. 


\end{abstract}

\author[J. Conant]{James Conant}
\email{jconant@math.utk.edu}
\address{Department of Mathematics, University of Tennessee, Knoxville, TN 37996}

\author[R. Schneiderman]{Rob Schneiderman}
\email{robert.schneiderman@lehman.cuny.edu}
\address{Department of Mathematics and Computer Science, Lehman College, City University of New York, Bronx, NY 10468 }

\author[P. Teichner]{Peter Teichner} 
\email{teichner@mac.com} 
\address{Max-Planck Institut f\"ur Mathematik, Bonn, Germany and} 
\address{Department of Mathematics, University of California in Berkeley} 


\keywords{Artin representation, clasper, homology cylinder, Johnson filtration, string link, Whitney tower, Y-filtration}

\maketitle
\section{Introduction}

This paper applies the techniques of our recent work on Whitney tower filtrations of classical link concordance \cite{CST0,CST1,CST3} to the settings of string links and $3$-dimensional homology cylinders. 
The problem is to understand certain geometric filtrations of links, string links and homology cylinders, respectively. We start by briefly summarizing the relevant filtrations; precise definitions are in subsequent sections. 
\begin{itemize}
\item[$\LL$\ :] The (framed) Whitney tower filtration $\LL \supset\W_0 \supset \W_1 \supset \W_2 \supset \cdots$ and twisted Whitney tower filtration $\W^\iinfty_n\supset\W_n$ on the set $\LL=\LL(m)$ of concordance classes of framed $m$-component links in the 3--sphere. Here $\W_n$ (respectively $\W^\iinfty_n$) consists of those links that bound a framed (respectively twisted) Whitney tower of order $n$ in the 4--ball \cite{CST1}.
\item[$\SL$\ :] 
\begin{itemize}
\item The analogous Whitney tower filtrations $\SW_n$ and $\SW^\iinfty_n$ on the group $\SL=\SL(m)$ of concordance classes of framed $m$-component string links (obtained from the usual closure operation from string links to links). 
\item The Johnson filtration $\SJ_n$ on $\SL$, defined as kernels of nilpotent Artin representations $\operatorname{Artin}_n\colon\mathbb{SL}\to \operatorname{Aut}_0(F/F_{n+2})$, where $F=F(m)$ is a free group on $m$ generators, $F_n$ are the terms in its lower central series and $\operatorname{Aut}_0(F/F_n)$ consists of those automorphisms of $F/F_n$ which are defined by conjugating each generator and which fix the product of generators.
\item The Goussarov-Habiro $Y$-filtration $\SY_n$ on $\SL$ consists of string links obtained from the unlink via surgeries along claspers with $n$ nodes.
\end{itemize}
\item[$\HC$\ :] The Johnson and Goussarov-Habiro filtrations generalize to filtrations $\J_n$ and $\Y_n$ respectively on the group $\HC=\HC(g,b)$ of homology cobordism classes of homology cylinders over a surface $\Sigma_{g,b}$ of genus $g$ with $b$ boundary circles. 
\end{itemize}

Various graded groups associated to these filtrations will also be defined precisely below, and will be denoted by the sans-serif versions of the above letters, for example $\sSW_n := \SW_n/\SW_{n+1}$. Our results are as follows:

\begin{thm} \label{thm:nilpotent}
The sets $\mathbb{SW}_n$ and $\mathbb{SW}_n^\iinfty$ are normal subgroups of $\mathbb{SL}$ which are central modulo the next order. We obtain nilpotent groups $\mathbb {SL}/\mathbb{SW}_n$ and
$\mathbb {SL}/\mathbb{SW}^\iinfty_n$, with associated graded groups
\[
\sSW_n\cong\sW_n   \quad \text{ and } \quad\sSW^\iinfty_n \cong \sW^\iinfty_n
\]
\end{thm}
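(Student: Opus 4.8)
The plan is to transport the structure of the link filtration $\W_\bullet$ established in \cite{CST1} along the closure map $\widehat{\,\cdot\,}\colon\SL\to\LL$, $\sigma\mapsto\hat\sigma$. The one genuinely geometric step is to match the intrinsic definition of $\SW_n$ (resp.\ $\SW^\iinfty_n$) --- string links cobounding an order $n$ (twisted) Whitney tower with the trivial string link --- with the closure description $\{\sigma : \hat\sigma\in\W_n\}$ (resp.\ $\{\sigma:\hat\sigma\in\W^\iinfty_n\}$): one direction caps off a Whitney-tower concordance with the trivial closing region, the other cuts an order $n$ Whitney tower for $\hat\sigma$ open along the closing disk, kept in general position so that the order is preserved. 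Granting this, and noting that $\W_0=\W^\iinfty_0=\LL$ (order zero is vacuous) so that $\SW_0=\SW^\iinfty_0=\SL$, the rest is formal: I would check that $\SW_n$ (and $\SW^\iinfty_n$) is a subgroup, that it is normal and central modulo the next order, and that closure induces the asserted isomorphism of graded groups. Nilpotency of $\SL/\SW_n$ is then automatic, since centrality modulo the next order forces $\gamma_{k+1}(\SL)\subseteq\SW_k$ by induction on $k$ (starting from $\SW_0=\SL$), so $\gamma_{n+1}(\SL/\SW_n)=1$.

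For the subgroup property I would use that $\widehat{\sigma\tau}$ is an iterated internal band sum of $\hat\sigma$ and $\hat\tau$, formed from their split union, and that $\widehat{\sigma^{-1}}$ is the mirror reverse of $\hat\sigma$; since \cite{CST1} shows $\W_n$ and $\W^\iinfty_n$ to be closed under split unions, band sums, mirrors and orientation reversals, it follows immediately that $\SW_n$ and $\SW^\iinfty_n$ are closed under the string-link product and under inversion, hence are subgroups containing the class of the trivial string link. Normality I would obtain from the trace property of closure, $\widehat{ABC}=\widehat{CAB}$ in $\LL$ for string links $A,B,C$ (slide $C$ around the closing arcs): then $\widehat{\tau\sigma\tau^{-1}}=\widehat{\tau^{-1}\tau\,\sigma}=\hat\sigma$, so $\sigma\in\SW_n$ forces $\tau\sigma\tau^{-1}\in\SW_n$, and likewise for $\SW^\iinfty_n$.

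Closure sends $\SW_n$ to $\W_n$ and $\SW_{n+1}$ to $\W_{n+1}$, and since $\widehat{\sigma\tau}$ is a band sum of $\hat\sigma$ and $\hat\tau$ while internal band sum is exactly the addition on $\sW_n$ --- well defined modulo $\W_{n+1}$ by \cite{CST1} --- the induced map $\sSW_n\to\sW_n$ is a homomorphism. It is onto because every link is a string-link closure, and one-to-one because $[\sigma]\mapsto 0$ says $\hat\sigma\in\W_{n+1}$, hence $\sigma\in\SW_{n+1}$ by the first step; running the same argument with the twisted filtration gives $\sSW^\iinfty_n\cong\sW^\iinfty_n$. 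Centrality modulo the next order now drops out of the trace property together with this injectivity: $\widehat{\tau\sigma\tau^{-1}}=\hat\sigma$ makes $[\tau\sigma\tau^{-1}]=[\sigma]$ already in $\sW_n$, hence in $\sSW_n$, so $[\tau,\sigma]\in\SW_{n+1}$; thus $\SW_n/\SW_{n+1}$ is central, and in particular abelian, in $\SL/\SW_{n+1}$, and similarly for $\SW^\iinfty_n$.

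The main obstacle I anticipate is the first step, and behind it the facts about links quoted from \cite{CST1}: that the Whitney-tower filtration is preserved under band sums and split unions, and that the resulting graded addition is independent of the band and of the chosen representatives. These rest on the Whitney-tower cut-and-paste and ``stacking'' constructions of \cite{CST1,CST3}, where the real work lies; with those in hand the passage to string links --- including the parallel but slightly more delicate bookkeeping of twisted intersection points and $\infty$-framed sheets in the $\iinfty$-versions --- is essentially formal.
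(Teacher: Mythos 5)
Your proposal is correct but organizes the argument around a different key observation than the paper does. The paper's proof works directly with Whitney tower concordances: it shows normality by an explicit construction in which $\cW_1$ (a concordance from $\sigma_1$ to the identity) is sandwiched between product concordances of $\sigma_2$ and $\bar\sigma_2$ and capped off, yielding a concordance $\cW_2$ from $\sigma_2\sigma_1\bar\sigma_2$ to the identity with the same intersection forest; and it shows centrality by stacking $\cW_1$ with a reflected copy to get a concordance $\cW$ from the commutator to the identity with $\tau_n(\cW)=\tau_n(\cW_1)+\tau_n(\overline{\cW_1})=0$, then invoking the obstruction theory of \cite{CST1} to raise the order. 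Your route instead exploits the trace identity $\widehat{ABC}\sim\widehat{CAB}$ for closures, which immediately collapses $\widehat{\tau\sigma\tau^{-1}}$ to $\hat\sigma$ in $\LL$; normality then follows from the definition $\sigma\in\SW_n\iff\hat\sigma\in\W_n$, and centrality from this plus the injectivity of $\sSW_n\to\sW_n$ that you establish independently. This localizes essentially all of the geometric content in two places --- the definitional equivalence (cutting a Whitney tower for $\hat\sigma$ open along the closing region, which is where the paper also does work) and the compatibility of stacking with band sums --- whereas the paper distributes the geometry across the normality, centrality, and graded-isomorphism steps. Both approaches rest on the same facts from \cite{CST1}: that band sum is well-defined on $\sW_n$ and that vanishing of $\tau_n$ raises the order. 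Two small caveats worth flagging: first, you attribute to \cite{CST1} closure of $\W_n$ under split unions and mirrors as citable facts, whereas these are really easy consequences of the Whitney tower constructions rather than stated theorems there (the paper instead proves the inverse case directly by reflecting the Whitney tower in $B^4$); second, your nilpotency induction starts from $\SW_0=\SL$, which is correct (an order-0 framed Whitney tower is just framed immersed disks, which every framed link bounds), but the paper's notation $\LL\supset\W_0$ could be misread as strict inclusion, so it is worth stating the base case explicitly as you do.
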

The finitely generated abelian groups  $\sW_n$ and $\sW^\iinfty_n$ associated to the (twisted) Whitney tower filtration on $\LL$ are computed in \cite{CST1,CST2} by identifying the Whitney tower intersection/obstruction theory with the first non-vanishing Milnor invariants and certain higher-order Sato-Levine invariants and higher-order Arf invariants. As an application of these techniques to the setting of string links, Theorem~\ref{thm:nilpotent} will be proven
in Section~\ref{sec:string-links}.

Also proven in Section~\ref{sec:string-links} are the next theorem and subsequent corollary, which follow from
the classification of $\sW^\iinfty_n$, together with the interpretation of the Artin representation
as the ``universal'' Milnor invariant \cite{HL2}. 
\begin{thm} \label{thm:Artin-kernel}
We have $ \SW^\iinfty_n\subset\SJ_n$, and the
 Artin representation $\operatorname{Artin}_n$ induces an epimorphism 
\[
\operatorname{Artin}^\iinfty_{n}\colon\mathbb {SL}/\mathbb{SW}^\iinfty_{n}\twoheadrightarrow\operatorname{Aut}_0(F/F_{n+2})
\]
The kernel is a finite $2$-group, generated by (internal band sums of) iterated Bing-doubles of the figure eight string knot (possibly with some additional trivial strands). In particular, for each $n$ there is an upper bound on the size of this kernel.
\end{thm}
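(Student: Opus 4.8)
The plan is to combine three ingredients: the identification $\sSW^\iinfty_n\cong\sW^\iinfty_n$ from Theorem~\ref{thm:nilpotent}, the classification of $\sW^\iinfty_n$ from \cite{CST1,CST2}, and the fact that the Artin representation realizes exactly the Milnor-invariant part of that classification. First I would establish the inclusion $\SW^\iinfty_n\subset\SJ_n$: a string link bounding a twisted Whitney tower of order $n$ has vanishing Milnor invariants through length $n+1$, so by the characterization of the Johnson filtration in terms of $\operatorname{Artin}_n$ (the Artin representation being the ``universal'' Milnor invariant, \cite{HL2}) it lies in $\SJ_n=\ker\operatorname{Artin}_n$. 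Hence $\operatorname{Artin}_n$ descends to a well-defined homomorphism $\operatorname{Artin}^\iinfty_n\colon\SL/\SW^\iinfty_n\to\aut_0(F/F_{n+2})$, and surjectivity is immediate since $\operatorname{Artin}_n$ was already surjective on $\SL$.

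Next I would analyze the kernel by passing to associated graded groups. Because $\SL/\SW^\iinfty_n$ is nilpotent (Theorem~\ref{thm:nilpotent}) with graded pieces $\sSW^\iinfty_k\cong\sW^\iinfty_k$ for $k<n$, and $\aut_0(F/F_{n+2})$ is likewise nilpotent with graded pieces given by the degree-$k$ Johnson homomorphisms, a standard induction on the lower central series reduces the computation of $\ker\operatorname{Artin}^\iinfty_n$ to computing, in each degree $k$, the kernel of the induced map $\sW^\iinfty_k\to \sJ_k$ on graded quotients. The papers \cite{CST1,CST2} identify $\sW^\iinfty_k$ as an extension involving the Milnor-invariant summand together with higher-order Sato--Levine and higher-order Arf contributions; the map to the Johnson graded quotient is precisely projection onto the Milnor part, so its kernel in degree $k$ is the higher-order Arf summand, which is detected by band sums of iterated Bing-doubles of a string knot with nonzero Arf invariant — the figure eight string knot being the standard representative. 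Since each such graded kernel is a finite $2$-group (the higher-order Arf invariants take values in $\mathbb Z_2$-vector spaces of known, finite dimension in degrees $\le n$), assembling the finitely many graded pieces shows $\ker\operatorname{Artin}^\iinfty_n$ is a finite $2$-group, and an explicit upper bound on its order follows from the upper bounds on the higher-order Arf ranks in \cite{CST1,CST2}.

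It remains to promote the generating statement from the graded level to $\SL/\SW^\iinfty_n$ itself, i.e.\ to show that the iterated Bing-double band-sum elements actually generate the whole kernel, not merely a graded quotient of it. This is the routine but essential final step: run the lower-central-series induction the other way, choosing at each stage explicit Bing-double preimages of a generating set of the degree-$k$ Arf summand, and observe that the resulting finite family, together with the (empty) generators coming from lower degrees, generates $\ker\operatorname{Artin}^\iinfty_n$ by the five-lemma applied to the short exact sequences of the filtration. The geometric input here — that internal band sums of iterated Bing-doubles of the figure eight knot realize the higher-order Arf invariants — is exactly what was used in \cite{CST1,CST2} for links, and it transfers verbatim to string links via the closure operation relating $\SW^\iinfty_n$ to $\W^\iinfty_n$.

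\medskip

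The main obstacle I anticipate is not the surjectivity or the $2$-group conclusion, which are formal, but pinning down precisely which combination of higher-order Sato--Levine and higher-order Arf summands survives in the kernel — that is, checking carefully that the Johnson/Milnor map kills \emph{exactly} the Arf part and nothing more in each degree. This requires matching the Whitney-tower obstruction theory's identification of $\sW^\iinfty_k$ with the output of $\operatorname{Artin}_n$ degree by degree, and is where the bookkeeping from \cite{CST1,CST2} does the real work.
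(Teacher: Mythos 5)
Your overall approach matches the paper's: both rest on the classification of $\sW^\iinfty_k$ from \cite{CST1,CST2} (Milnor invariants together with higher-order Arf invariants detect all nontrivial elements, \cite[Cor.~1.16]{CST1}), and both establish $\SW^\iinfty_n\subset\SJ_n$ by the observation that order-$n$ twisted Whitney towers force longitudes into $F_{n+1}$. Where you diverge is in the proof that the kernel is a finite $2$-group: you run a structural induction on the lower central series, arguing that each graded piece of the kernel injects into $\ker(\sW^\iinfty_k\to\sJ_k)$, which is $2$-torsion, hence the extension problem forces a finite $2$-group. The paper instead gives a hands-on computation: for $\sigma$ in the kernel, the intersection invariant $\tau^\iinfty_k$ of any twisted Whitney tower automatically vanishes for $k\equiv 0,1,3\bmod 4$ and is $2$-torsion for $k\equiv 2\bmod 4$, so stacking shows $\sigma^2\in\SW^\iinfty_6$, $\sigma^4\in\SW^\iinfty_{10}$, and so on, giving $\sigma^{2^k}=1$ once $4k-2>n$. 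Both arguments are valid and consume the same external inputs; yours is cleaner structurally, the paper's gives explicit exponent bounds and avoids having to re-assemble from the graded level.

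Two small corrections. First, you anticipate having to separate higher-order Sato--Levine from higher-order Arf contributions, but in the \emph{twisted} filtration $\SW^\iinfty$ the Sato--Levine invariants do not appear at all — they are a feature only of the framed filtration $\SW$ (they are mod-$2$ projections of higher Milnor invariants needed to classify $\sW_n$, not $\sW^\iinfty_n$). So the kernel of $\sW^\iinfty_k\to\sJ_k$ really is exactly the Arf part with no extra bookkeeping. Second, the higher-order Arf invariants take values in $\Z_2$-vector spaces of \emph{bounded}, not \emph{known}, dimension: the paper is explicit that the image of $\arf_k$ is not determined, which is precisely why the theorem asserts only ``an upper bound on the size of this kernel.'' Your argument still works, since an upper bound on the target rank suffices, but the phrasing as stated overclaims.
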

See Figures~\ref{fig:bingstring} and ~\ref{fig:internalbandsum} for the definition of Bing-doubling and internal band sums in the setting of string links.

In fact the kernel of $\operatorname{Artin}^\iinfty_{n}$ can be characterized geometrically in several other ways (the different geometric conditions are explained in Definition~\ref{def:string link}):
\begin{cor}\label{cor:Artin-kernel}
The following subsets of $\mathbb {SL}/\mathbb{SW}^\iinfty_{n}$ are equal to the kernel of $\operatorname{Artin}^\iinfty_{n}$:
\begin{enumerate}
\item The subgroup generated by (internal band sums of) iterated Bing-doubles of a fixed string knot $K_0$ with nontrivial Arf invariant (possibly with some additional trivial strands).
\item The subgroup generated by (internal band sums) of iterated Bing-doubles of all string knots with non-trivial Arf invariant (possibly with some additional trivial strands).
\item The set of equivalence classes of boundary string links.
\item The set of equivalence classes of $\pi_1$-null string links.
\end{enumerate}
\end{cor}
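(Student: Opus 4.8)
The plan is to bootstrap from Theorem~\ref{thm:Artin-kernel}, which already identifies $\ker\operatorname{Artin}^\iinfty_n$ with the subgroup $\cB(K_8)\subseteq\SL/\SW^\iinfty_n$ generated by the internal band sums (possibly with added trivial strands) of iterated Bing-doubles of the figure-eight string knot $K_8$, together with the graded computation of \cite{CST1,CST2}: for each $j$, under the isomorphism $\sSW^\iinfty_j\cong\sW^\iinfty_j$ of Theorem~\ref{thm:nilpotent}, the group $\sW^\iinfty_j$ splits as the image of the total Milnor invariant plus a higher-order Arf summand, the latter a $\z$-vector space which is zero unless $j\equiv 2\pmod 4$, which is spanned by the classes of internal band sums of order-$j$ iterated Bing-doubles of a fixed string knot with nontrivial Arf invariant, and on which this class depends only on the Arf invariant of that string knot. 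Write $\operatorname{Bing}_j(K)$ for a choice of order-$j$ iterated Bing-double of a string knot $K$, so that $\operatorname{Bing}_j(K)\in\SW^\iinfty_j$.

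\emph{The inclusions ``$\subseteq\ker\operatorname{Artin}^\iinfty_n$''.} An iterated Bing-double of any string knot is a boundary string link, and by Definition~\ref{def:string link} internal band sums along the prescribed bands, and the addition of trivial strands, preserve both the class of boundary and the class of $\pi_1$-null string links; a standard gluing argument (join the Seifert surfaces, resp.\ the $\pi_1$-null immersed surfaces, of the two factors) shows that stacking preserves these classes too, so each of the four subsets in (1)--(4) is a subgroup of $\SL/\SW^\iinfty_n$. Boundary and $\pi_1$-null string links have vanishing Milnor invariants, hence lie in $\SJ_n=\ker\operatorname{Artin}_n$, so their classes lie in $\ker\operatorname{Artin}^\iinfty_n$; this gives ``$\subseteq$'' in all of (1)--(4).

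\emph{The reverse inclusions.} The substantive point is part (1): $\cB(K_0)=\ker\operatorname{Artin}^\iinfty_n$ for an arbitrary string knot $K_0$ with $\arf K_0=1$. I would prove this by downward induction over the finite filtration $N_j:=\ker\operatorname{Artin}^\iinfty_n\cap(\SW^\iinfty_j/\SW^\iinfty_n)$, $j=n,n-1,\dots,0$, with $N_0=\ker\operatorname{Artin}^\iinfty_n$ and $N_n=0$. The inductive claim is that $N_j$ is generated by the internal band sums (with trivial strands) of the $\operatorname{Bing}_{j'}(K_0)$ for $j\le j'<n$; the inclusion ``$\supseteq$'' is immediate since these generators lie in $\SW^\iinfty_j$ and, being boundary string links, lie in $\ker\operatorname{Artin}^\iinfty_n$, and $N_n=0$ starts the induction. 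For the inductive step, given $x\in N_j$, its leading term $\bar x\in\sW^\iinfty_j$ lies in the kernel of the total Milnor invariant, hence in the higher-order Arf summand; by the graded computation above and $\arf K_0=1$, $\bar x$ is represented there by a product $p$ of internal band sums (possibly with trivial strands) of copies of $\operatorname{Bing}_j(K_0)$. Then $p\in N_j$ and $xp^{-1}\in N_{j+1}$, which by the inductive hypothesis is generated by the internal band sums of the $\operatorname{Bing}_{j'}(K_0)$ for $j+1\le j'<n$; hence $x=(xp^{-1})\,p$ is of the required form. At $j=0$ this yields $\ker\operatorname{Artin}^\iinfty_n=\cB(K_0)$, which is (1). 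Part (2) follows from $\cB(K_0)\subseteq(\text{the subgroup in (2)})\subseteq\ker\operatorname{Artin}^\iinfty_n=\cB(K_0)$, and (3), (4) follow since the subgroups there contain every generator of $\cB(K_0)$ (iterated Bing-doubles and their internal band sums are boundary, hence $\pi_1$-null, string links) and are contained in $\ker\operatorname{Artin}^\iinfty_n$ by the previous paragraph.

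\emph{The main obstacle} I expect is the identification used in the inductive step, namely that the image of $N_j$ in $\sW^\iinfty_j\cong\sSW^\iinfty_j$ is exactly the kernel of the total Milnor invariant: a class with nonzero Milnor invariant cannot be the leading term of any element of $\ker\operatorname{Artin}^\iinfty_n$, while every class with vanishing Milnor invariant is. This amounts to matching the Whitney-tower grading of $\SW^\iinfty_\bullet$ with the lower-central-series grading of the target $\operatorname{Aut}_0(F/F_{n+2})$ of the Artin representation, and the ``realizability'' half is in essence the surjectivity assertion of Theorem~\ref{thm:Artin-kernel}. A secondary technical point is to verify, from Definition~\ref{def:string link}, that the internal band sum operation is compatible with connected sum of the underlying string knots closely enough that the class of $\operatorname{Bing}_j(K)$ in $\sW^\iinfty_j$ depends only on $\arf K$ (so that $\operatorname{Bing}_j(K_0)\equiv\operatorname{Bing}_j(K_8)$ modulo $\SW^\iinfty_{j+1}$), and that the conditions ``boundary'' and ``$\pi_1$-null'' are genuinely preserved under stacking of string links, so that the subsets in (3) and (4) are subgroups.
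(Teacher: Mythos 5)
Your proposal is correct and follows essentially the same route the paper takes: the paper likewise observes that all four sets have vanishing Milnor invariants (hence lie in $\ker\operatorname{Artin}^\iinfty_n$), that the sets are nested increasingly from (1) to (4), and that (1) equals the kernel via the Milnor-plus-higher-order-Arf classification of $\sW^\iinfty_j$ and realization by Bing doubles as in the proof of Theorem~\ref{thm:Artin-kernel}. Your downward induction over the $\SW^\iinfty_j$ filtration and the explicit remark that the class of $\operatorname{Bing}_j(K)$ in the Arf summand depends only on $\arf(K)$ (allowing $K_0$ in place of the figure-eight knot) merely flesh out what the paper leaves implicit in its two-sentence proof.
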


Regarding the other filtrations, we will show in Proposition~\ref{prop:inclusions} that $\SY_n\subseteq \SW_n$. (In fact, in an upcoming paper we will prove that this is an equality, and that the relation of Whitney tower concordance is equivalent to the notion of $C_n$-concordance studied by Meilhan and Yasuhara \cite{MY}.) Summarizing, we see that the filtrations on string links $\SL$ are ordered as follows:
\[
\SY_n\subseteq \SW_n\subseteq \SW^\iinfty_n\subseteq \SJ_n
\]
For $n=1$, all these filtrations are equal to the set $\SL_1:=\SY_1=\SW_1=\SW_1^\iinfty=\SJ_1$ of concordance classes of string links with trivial linking numbers and framings.  

Similarly, for $n=1$ the above filtrations of $\HC(g,b)$ give those homology cylinders which induce the identity homomorphism on first homology $H_1(\Sigma_{g,b})$. 
We write $\HC_1=\Y_1 = \J_1$ for this subgroup.
By taking the complement of a string link in $D^2 \times [0,1]$, one gets a well-known group homomorphism 
\[
\mathrm{C}: \SL(m) \ra\HC(0,m+1)
\]
which takes $\SJ_n$ to $\J_n$, and takes $\SY_n$ to $\Y_n$. In unpublished work \cite{H}, Habegger used the fact that $\Sigma_{g,1} \times [0,1]\cong \Sigma_{0,2g+1} \times [0,1]$ to give a bijection
\[
\HC_1(0,2g+1) \longleftrightarrow \HC_1(g,1)
\]
which is not a group homomorphism but identifies the filtrations $\J_n$ (respectively $\Y_n$) on the two different types of homology cylinders. 
In Section~\ref{sec:map}, we generalize the map $\mathrm{C}$ to another geometric map
\[
\mathrm{H}: \SL_1(2g) \ra \HC_1(g,1) \quad\text{(see Figure~\ref{fig:stringclosure})}
\]
which is not a homomorphism but takes both $\SY_n$ to $\Y_n$ and $\SJ_n$ to $\J_n$ for $n\geq 1$. We prove that 
this map $\mathrm{H}$ agrees with Habegger's bijection pre-composed with $\mathrm{C}$.  Combining results from  \cite{HL2} and \cite{GL} it follows that the induced maps on the associated graded groups $\mathrm{C}_n:\sSJ_n \to \sJ_n$ are group isomorphisms for all $n\geq 1$, and by composing with Habegger's bijection we see that the same is true for $\mathrm{H}_n: \sSJ_n \to \sJ_n$. Here $\sSJ_n$ and $\sJ_n$ are the quotient groups $\SJ_n/\SJ_{n+1}$ and $\J_n/\J_{n+1}$ of the Johnson filtrations. 

As will be explained in Section~\ref{sec:map}, the analogous induced maps for the $Y$-filtrations are not yet fully understood but again the statements for $\mathrm{C}_n$ and $\mathrm{H}_n$ are equivalent:

\begin{thm}\label{thm:H}
The induced maps $\mathrm{C}_n, \,\mathrm{H}_n: \sSY_n\to \sY_n$ are group isomorphisms for $n \equiv 0,2,3 \mod 4$. In the remaining cases, $\mathrm{C}_{4n+1},\,\mathrm{H}_{4n+1}$ are epimorphisms with finitely generated 2-torsion kernel. 
\end{thm}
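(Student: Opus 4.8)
The plan is to leverage the already-established commutative square relating the maps $\mathrm{C}$ and $\mathrm{H}$ to Habegger's bijection, so that it suffices to analyze $\mathrm{C}_n\colon\sSY_n\to\sY_n$. First I would record the algebraic reformulation: by the discussion in Section~\ref{sec:map}, the map $\mathrm{H}$ equals Habegger's bijection pre-composed with $\mathrm{C}$, and Habegger's bijection induces isomorphisms on the graded Johnson groups and on the graded $Y$-filtration groups (it identifies the $\Y_n$ filtrations on the two types of homology cylinders). Hence $\mathrm{H}_n$ is an isomorphism, epimorphism, or has a given kernel precisely when $\mathrm{C}_n$ does. So the whole theorem reduces to the statement for $\mathrm{C}_n$.

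Next I would set up the diagram of graded groups. There is a commutative square whose vertical maps are the realization/obstruction maps from the Whitney tower theory and whose horizontal maps come from $\mathrm{C}$: on the string link side we have the surjection $\sSY_n\twoheadrightarrow\sSW_n\cong\sW_n$ (using Proposition~\ref{prop:inclusions} and Theorem~\ref{thm:nilpotent}), and on the homology cylinder side the analogous surjection $\sY_n\twoheadrightarrow$ (the $Y$/clasper-graded quotient, which by Levine's work \cite{L1,L2,L3} together with \cite{GL} is identified with a Lie-theoretic target up to $2$-torsion). The key input is that the Whitney tower / clasper obstruction theory computes both sides in terms of the \emph{same} combinatorial objects: trivalent graphs (the group usually written $\mathsf{T}_n$ or its relatives), with the degree-$n$ piece surjecting onto the target. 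The content of \cite{CST1,CST2}, as invoked for Theorem~\ref{thm:nilpotent}, is that for the twisted theory $\sW^\iinfty_n$ is computed exactly, and the discrepancy between the framed theory $\sW_n$ and the twisted theory is governed precisely by the higher-order Arf invariants, which live only in degrees $n\equiv 1\bmod 4$ — i.e., $\sW_{4k+1}\to\sW^\iinfty_{4k+1}$ has (conjecturally trivial, but at worst) finitely-generated $2$-torsion kernel/cokernel coming from the higher Arf obstructions, while for $n\equiv 0,2,3\bmod 4$ the framed and twisted theories agree and are computed cleanly by Milnor invariants and higher-order Sato–Levine invariants.

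Then I would transport this trichotomy across the maps. Because $\mathrm{C}$ (and likewise $\mathrm{H}$) is compatible with the obstruction theories — a Whitney tower of order $n$ in the $4$-ball bounded by (the closure of) a string link yields, via the $D^2\times[0,1]$-complement construction, the corresponding clasper/$Y$-structure of class $n$ on the homology cylinder, and conversely — the induced map $\mathrm{C}_n$ on graded groups is, after identifying both sides with their combinatorial descriptions, essentially the identity on the trivalent-graph part. For $n\equiv 0,2,3\bmod 4$ both graded groups are the (same) torsion-computed quotient of trivalent graphs with no Arf contribution, so $\mathrm{C}_n$ is an isomorphism. For $n=4k+1$, the source $\sSY_{4k+1}$ surjects onto the trivalent-graph part but the potential higher-order Arf summand may fail to be detected on the homology cylinder side (or vice versa), so $\mathrm{C}_{4k+1}$ is at least a surjection, with kernel controlled by the higher-order Arf groups, which \cite{CST1,CST2} bound by finitely-generated $2$-torsion. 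Finally, I would invoke Theorem~\ref{thm:Artin-kernel} and Corollary~\ref{cor:Artin-kernel}: the kernel of $\operatorname{Artin}^\iinfty$ is exactly generated by Bing-doubles of Arf-nontrivial string knots, which is the geometric incarnation of the higher-order Arf phenomenon, confirming that any failure of injectivity in degree $4k+1$ is carried by $2$-torsion of this Bing-double type.

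The main obstacle is the last step: verifying that the map $\mathrm{C}$ (equivalently $\mathrm{H}$) is genuinely compatible with the \emph{framed} (not just twisted or Johnson) obstruction theories at the graded level — i.e., that a clasper/$Y$-surgery presentation of class $n$ on the homology cylinder can be matched, order by order, with a framed Whitney tower of order $n$ for the associated string link, including correct bookkeeping of the framing/twisting data that distinguishes the $n\equiv 1\bmod4$ case. Establishing this dictionary carefully — and in particular checking that the higher-order Sato–Levine invariants (degrees $\equiv 2\bmod4$) are matched so that those degrees give isomorphisms rather than merely surjections — is where the real work lies; everything else is formal once the commuting square of obstruction theories is in place.
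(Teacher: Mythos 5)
Your reduction of $\mathrm{H}$ to $\mathrm{C}$ via Habegger's bijection and your instinct to push everything through a commuting square of obstruction/realization maps are both correct and match the paper's general framework (Proposition~\ref{prop:comm} and the two corollaries following it). However, there is a substantive gap in how you handle the $n\equiv 3\bmod 4$ case, and your mechanism for distinguishing degrees $\equiv 3$ from degrees $\equiv 1$ is not the one that actually does the work.

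The paper's argument for the isomorphism in degree $4k-1$ is a 5-lemma argument: it needs to know that the kernel of $\sY_{4k-1}\to\sD_{4k-1}$ and the kernel of $\sW_{4k-1}\to\sD_{4k-1}$ are \emph{both} isomorphic to $\z\otimes\sL_{2k+1}$, matching the kernel of $\widetilde\cT_{4k-1}\to\sD_{4k-1}$. The $\sW$-side of this is in \cite{CST1}, but the $\sY$-side is exactly Theorem~\ref{thm:improved-filtration}, whose proof in turn depends on showing that Habiro's realization map $\theta_{2n+1}$ factors through $\widetilde\cT_{2n+1}$ (Proposition~\ref{prop:twistedtheta}, the main technical content of Section~\ref{sec:homology-cylinders}) and on the surjectivity of the snake-lemma map $\kappa_{2n-1}$ (Proposition~\ref{prop:ksurjective}). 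You gesture at ``matching a clasper/$Y$-surgery presentation with a framed Whitney tower,'' but that phrasing misses the point: the nontrivial step is translating the \emph{framing relations} into clasper language via twisted claspers, so that the quotient $\widetilde\cT_n$ of $\cT_n$ still surjects onto $\sY_n$. Without this, the most your argument can give for odd $n$ (both $3$ and $1\bmod 4$) is an epimorphism with 2-torsion kernel, not the claimed isomorphism for $n\equiv 3\bmod 4$.

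The trichotomy in the statement is also not driven by ``framed vs.\ twisted theory'' as you suggest. The even case is settled by the resolution of Levine's conjecture ($\eta'_{2k}\colon\cT_{2k}\to\sD'_{2k}$ is an isomorphism). The split between $4k-1$ and $4k+1$ comes from the fact that in order $4k-1$ the relevant kernel involves $\sL_{2k+1}=\sL'_{2k+1}$, so the Lie and quasi-Lie pieces coincide and a 5-lemma closes the gap, whereas in order $4k+1$ the kernel of $\widetilde\cT_{4k+1}\to\sD_{4k+1}$ is $\z\otimes\sL'_{2k+2}$, and whether this injects into $\sY_{4k+1}$ is exactly Conjecture~\ref{conj:a-maps}; only the finitely-generated 2-torsion bound on the kernel of $\mathrm{C}_{4k+1}$ is formal. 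So the step you dismissed as ``essentially the identity on the trivalent-graph part'' is in fact where Theorem~\ref{thm:improved-filtration} and hence Proposition~\ref{prop:twistedtheta} are indispensable.
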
 
The graded group $\sSY_n$ is defined as the quotient of $\SY_n$ by the equivalence relation generated by simple order $n+1$ clasper surgeries. (So for example two string links representing elements in $\SY_n$ are equivalent if and only if they differ by a sequence of concordances and order $n+1$ simple clasper surgeries.)
Similarly $\sY_n$ is defined as the quotient of  $\Y_n$ by the equivalence relations generated by order $n+1$ simple clasper surgeries. 
 
As an aside, we recall that by a theorem of Nielsen \cite{Nie,ZVC}, $\HC(g,b)$ contains the mapping class group of $\Sigma_{g,b}$. This is one source of interest in the filtrations $\J_n$ and $\Y_n$.

Levine had already observed that in $\HC(g,1)$ there is an inclusion $\Y_n\subseteq \J_n$ and he started to study the difference in \cite{L1,L2}. He conjectured the statements of the next theorem which we proved in \cite{CST3}. 

In the following, $\sL=\sL(2 g)$ denotes the free Lie algebra (over the integers) on $2 g$ generators (of degree 1), and $\sL_n$ is the degree $n$ part of $\sL$ (additively generated by $n$-fold brackets of generators, modulo the Jacobi identity and the self-annihilation relations $[X,X]=0$).

\begin{thm}[\cite{CST3}] \label{thm:filtration}
For $n\geq 1$, the inclusion of filtrations $\Y_n\subseteq \J_n$ of $\HC(g,1)$ induces the following exact sequences of associated graded groups:
$$0\to{\sf Y}_{2n}\to {\sf J}_{2n}\to \sL_{n+1}\otimes\z \to 0$$
$$\Z_2^m\otimes \sL_{n+1} \to{\sf Y}_{2n+1}\to {\sf J}_{2n+1}\to 0$$
\end{thm}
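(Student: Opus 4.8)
\textbf{Proof proposal for Theorem~\ref{thm:filtration}.}

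The plan is to realize both associated graded groups as quotients of a common source --- the degree-$n$ (tree-level) part of the clasper/Whitney-tower obstruction theory --- and then to track the difference between the ``Johnson'' relations and the ``$Y$'' relations degree by degree. First I would set up the target: the Johnson filtration graded group $\sf J_n$ is computed by Levine's work \cite{L1,L2,L3}, where the first Johnson homomorphism identifies $\sf J_n$ with a known quotient of the space of symplectic trees/derivations; in the notation fixed in the excerpt this quotient is governed by $\sL_{n+1}$ and its relatives, so the terms $\sL_{n+1}\otimes\z$ and $\Z_2^m\otimes\sL_{n+1}$ appearing on the two ends of the sequences are exactly the ``extra'' classes visible to the Johnson homomorphism but not to clasper surgery at the relevant order. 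The source is the group generated by order-$n$ claspers (equivalently, order-$n$ Whitney towers) modulo the order-$(n+1)$ relations, on which there is a surjection onto $\sf Y_n$ by definition and a map to $\sf J_n$ via the Artin-type representation. The key structural input is the clasper/tree calculus: the IHX and antisymmetry relations, together with the ``geometric'' relations (the framing relation and the interior-twist relation) that were already analyzed in \cite{CST0,CST1,CST3} and carry over verbatim here because homology cylinders over $\Sigma_{g,1}$ behave, for these purposes, exactly like string links do.

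Next I would split into the two parities. In even degree $2n$, the plan is to show that the map $\sf Y_{2n}\to\sf J_{2n}$ is injective with cokernel $\sL_{n+1}\otimes\z$: injectivity comes from the fact that in even degrees there are no higher-order Arf-type obstructions --- the only relations killing a tree class are IHX, antisymmetry and framing, all of which are already imposed on the Johnson side --- while the cokernel $\sL_{n+1}\otimes\z$ records precisely the ``Sato--Levine'' or framing-type classes (the image of the symmetric part of the bracketing map) which the Johnson homomorphism detects but which are automatically trivial in $\sf Y_{2n}$ because a clasper realizing them can be undone by a sequence of higher-order clasper moves. In odd degree $2n+1$, the asymmetry flips: now the map $\sf Y_{2n+1}\to\sf J_{2n+1}$ is surjective (every Johnson class of this degree is realized by an order-$(2n+1)$ clasper, by a direct geometric realization argument), but it can fail to be injective. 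The kernel is the image of $\Z_2^m\otimes\sL_{n+1}$, coming from the higher-order Arf invariants: these are exactly the $\z$-valued obstructions carried by ``looped'' trees (Bing-double-type configurations of the figure-eight knot, as in Theorem~\ref{thm:Artin-kernel}) which are invisible to the Artin/Johnson representation but can be nonzero for clasper surgery. Showing that map $\Z_2^m\otimes\sL_{n+1}\to\sf Y_{2n+1}$ is well-defined and that its image is all of the kernel is the heart of the odd case; exactness on the left ($\Z_2^m\otimes\sL_{n+1}\to\sf Y_{2n+1}$ need not be injective, which is why the sequence is not written with a leading $0$) is precisely the as-yet-undetermined $2$-torsion.

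The main obstacle, I expect, is the odd-degree exactness at $\sf Y_{2n+1}$: namely proving that the kernel of $\sf Y_{2n+1}\to\sf J_{2n+1}$ is \emph{exactly} the image of $\Z_2^m\otimes\sL_{n+1}$, not something larger. For the inclusion ``$\supseteq$'' one needs to check that every Bing-double-type class is genuinely killed by the Artin representation --- this follows from the interpretation of $\operatorname{Artin}_n$ as the universal Milnor invariant \cite{HL2} combined with the computation of $\sW^\iinfty_n$ in \cite{CST1,CST2}, since the figure-eight string knot has trivial Milnor invariants. For the inclusion ``$\subseteq$'' one must show that any order-$(2n+1)$ clasper with trivial Artin invariant can be reduced, modulo order-$(2n+2)$ claspers, to a band sum of such Bing-doubles; this is the analogue for homology cylinders of the main classification theorem of \cite{CST1} for the Whitney tower filtration of $\LL$, and I would prove it by transporting that argument through the map $\mathrm{H}$ of Section~\ref{sec:map} (which takes $\SY_n$ to $\Y_n$ and $\SJ_n$ to $\J_n$) together with Habegger's bijection, reducing the homology-cylinder statement to the string-link statement and thence to the already-established link case. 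The even-degree statement and the surjectivity claims in odd degree should then be comparatively routine consequences of the tree calculus and a realization lemma.
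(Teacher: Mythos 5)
There is a genuine gap, and it starts with the status of the statement: this paper does not prove Theorem~\ref{thm:filtration} at all --- it is quoted from \cite{CST3}, and the actual argument is algebraic rather than geometric. One starts from Levine's diagram (Theorem~\ref{thm:Levine}): Habiro's surjection $\theta_n\colon\cT_n\twoheadrightarrow{\sf Y}_n$ whose composite with ${\sf Y}_n\to{\sf J}_n\cong{\sf D}_n$ is $\eta_n$. The two exact sequences then follow from two inputs: (i) the integral Levine Conjecture, proved in \cite{CST3} by a tree-homology computation, that $\eta'_n\colon\cT_n\to\sD'_n$ is an isomorphism; and (ii) Levine's quasi-Lie comparison $0\to\z\otimes\sL_k\to\sL'_{2k}\to\sL_{2k}\to 0$ from \cite{L3}, which at the level of the bracket-map kernels gives $0\to\sD'_{2n}\to\sD_{2n}\to\z\otimes\sL_{n+1}\to 0$ and $\Z_2^m\otimes\sL_{n+1}\to\sD'_{2n+1}\to\sD_{2n+1}\to 0$. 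A diagram chase using surjectivity of $\theta_n$ then yields both sequences: in even order $\theta_{2n}$ is forced to be injective (hence an isomorphism onto ${\sf Y}_{2n}$), and in odd order the kernel of ${\sf Y}_{2n+1}\to{\sf J}_{2n+1}$ is the $\theta$-image of the kernel of $\sD'_{2n+1}\to\sD_{2n+1}$. Your proposal has the right skeleton (a common source $\cT_n$ mapping onto ${\sf Y}_n$ and into ${\sf J}_n$), but it treats the even-order injectivity as routine (``no higher-order Arf-type obstructions in even degrees; the only relations are IHX, AS and framing, already imposed on the Johnson side''), which assumes exactly what must be proven: that $\eta_{2n}$ has no integral kernel is the hard combinatorial theorem of \cite{CST3}. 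The details are also off: framing relations occur only in odd orders ($\widetilde\cT_{2k}=\cT_{2k}$ by definition), and the cokernel $\sL_{n+1}\otimes\z$ consists of Johnson classes \emph{not} hit by order-$2n$ clasper surgery, not classes realized by claspers that can be ``undone by higher-order moves'' (such classes would map to zero in ${\sf Y}_{2n}$, which is irrelevant to the cokernel).

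For the odd case your strategy --- identify the kernel with Bing-double/Arf-type classes and reduce to the link classification by transporting through $\mathrm{H}$ and Habegger's bijection --- is circular within this paper's logic and not how the result is obtained. Section~\ref{sec:map} and Theorem~\ref{thm:H} \emph{use} Theorem~\ref{thm:filtration} (and its refinement Theorem~\ref{thm:improved-filtration}) as input, and the comparison $\sSY_n\cong\sW_n$ you would need is only announced for a future paper. Moreover, the kernel in order $2n+1$ arises algebraically from the quasi-Lie squaring phenomenon (the image of $\Z_2^m\otimes\sL_{n+1}\cong\sL_1\otimes\z\otimes\sL_{n+1}$, i.e.\ symmetric trees of the form $\tree{\ell(v)}{J}{J}$), not from an a priori analysis of figure-eight Bing doubles; the Arf-invariant analogy enters only later, in Theorem~\ref{thm:improved-filtration}, Proposition~\ref{prop:twistedtheta} and Conjecture~\ref{conj:a-maps}, where the question is precisely whether part of this $\z$-torsion survives. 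So the heart of the theorem --- the integral isomorphism $\eta'_n\colon\cT_n\to\sD'_n$ together with Levine's Lie versus quasi-Lie exact sequences --- is missing from your outline, and the steps you propose in its place either beg the question or invert the paper's logical order.
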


Levine did not conjecture that the map $\z^m\otimes \sL_{n+1} \to{\sf Y}_{2n+1}$ is injective, and in fact it is \emph{not} because the \emph{framing relations} introduced in \cite{CST1} are also present in this context, as will be seen in Proposition~\ref{prop:twistedtheta} below. 
Unravelling the odd order case is the main result in this paper:

\begin{thm}\label{thm:improved-filtration}
For $n\geq 1$, there are exact sequences of associated graded groups:
$$0\to\sL_{2n+1}\otimes\z\to {\sf Y}_{4n-1}\to{\sf J}_{4n-1}\to 0$$
$$0\to{\sf K}^{\sf Y}_{4n+1}\to{\sf Y}_{4n+1}\to {\sf J}_{4n+1}\to 0$$
and the kernel ${\sf K}^{\sf Y}_{4n+1}$ fits into the exact sequence $\sL_{n+1}\otimes\z\overset{a_{n+1}}\to{\sf K}^{\sf Y}_{4n+1}\to \sL_{2n+2}\otimes\z\to 0$.
\end{thm}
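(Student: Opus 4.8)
The plan is to take the odd--order exact sequence already supplied by Theorem~\ref{thm:filtration} (i.e.\ by \cite{CST3}) and determine its left--hand term. For every $k\geq 1$ we thus have a homomorphism $\phi_k\colon\z^m\otimes\sL_{k+1}\to\sY_{2k+1}$ whose image is ${\sf K}^{\sf Y}_{2k+1}:=\ker(\sY_{2k+1}\twoheadrightarrow\sJ_{2k+1})$; granting this, surjectivity onto $\sJ_{4n\mp1}$ and the second displayed short exact sequence are automatic, and everything reduces to computing $\operatorname{im}\phi_k$, separately for $2k+1=4n-1$ (where $k$ is odd) and for $2k+1=4n+1$ (where $k$ is even).

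I would analyse $\phi_k$ using the tree/clasper description of $\sY_\bullet(\HC(g,1))$ from \cite{CST3} together with two further inputs. First, Proposition~\ref{prop:twistedtheta}: the framing relations of \cite{CST1} persist in this twisted setting, which forces $\phi_k$ to be compatible with the Lie bracket $H\otimes\sL_{k+1}\to\sL_{k+2}$ modulo an ``internal doubling'' correction; more precisely, $\phi_k$ annihilates the kernel $D_{k+1}(H)\otimes\z$ of the bracket except for the image of an internal doubling map $\mu\colon\sL_{(k+2)/2}\otimes\z\to D_{k+1}(H)\otimes\z$, which is defined only when $k$ is even. Second, the classification of the associated graded groups of the (framed and twisted) Whitney tower filtrations of link concordance from \cite{CST1,CST2} --- together with the identification $\sJ_n\cong D_n(H):=\ker(H\otimes\sL_{n+1}\to\sL_{n+2})$ of \cite{HL2,GL}, and the standard $2$--torsion description of the kernel and cokernel of the comparison of $D_n(H)$ with the free Lie algebra --- which tells us which of the framing and internal--doubling relations actually survive in $\sY_\bullet$ in each degree (the geometric maps $\mathrm{C}$ and $\mathrm{H}$ of Section~\ref{sec:map} are what let these link computations be transported here). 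Assembling this, ${\sf K}^{\sf Y}_{2k+1}=\operatorname{im}\phi_k$ fits into a right--exact sequence whose cokernel is $\sL_{k+2}\otimes\z$ and whose kernel is a quotient of $\sL_{(k+2)/2}\otimes\z$, the kernel being trivial when $k$ is odd.

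It then remains to check that the cokernel is genuinely all of $\sL_{k+2}\otimes\z$, i.e.\ that $\phi_k$ introduces no extra relations on the bracket image; this ``realization'' step I would carry out as in \cite{CST1,CST2}, detecting the classes $\phi_k(\bar h\otimes\ell)$ --- realized by internal band sums of iterated Bing--doubles --- by the higher--order Sato--Levine invariants, which account for a full copy of $\sL_{k+2}\otimes\z$. For $k=2n-1$ the internal doubling term is absent, so ${\sf K}^{\sf Y}_{4n-1}\cong\sL_{2n+1}\otimes\z$, giving the first sequence. For $k=2n$, letting $a_{n+1}$ denote $\phi_k$ restricted through $\mu$ to $\sL_{n+1}\otimes\z$, the same right--exact sequence reads $\sL_{n+1}\otimes\z\xrightarrow{\,a_{n+1}\,}{\sf K}^{\sf Y}_{4n+1}\to\sL_{2n+2}\otimes\z\to 0$, which is the last sequence.

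The main obstacle is to say anything about $a_{n+1}$ beyond right--exactness: its image is precisely the ``higher--order Arf'' $2$--torsion (living in Whitney tower degree $4n+2$), so injectivity of $a_{n+1}$ is exactly the higher--order Arf invariant conjecture of \cite{CST1,CST2}, and without it one obtains only an upper bound for ${\sf K}^{\sf Y}_{4n+1}$. This is also why the $4n+1$ case --- unlike the clean $4n-1$ case, where there is no internal doubling map at all --- can only be pinned down up to $2$--torsion, consistently with $\mathrm{H}_n$ being merely an epimorphism with $2$--torsion kernel for $n\equiv1\bmod 4$ in Theorem~\ref{thm:H}. A secondary, more technical point is verifying that pushing the \cite{CST1,CST2} computations through $\mathrm{C}$ and $\mathrm{H}$ produces no relations other than framing and internal doubling in the degrees of interest, which can be controlled by the same Artin--representation / Milnor--invariant bookkeeping used in the proof of Theorem~\ref{thm:Artin-kernel}.
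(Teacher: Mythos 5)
Your proposal has the right overall shape: start from the odd--degree sequence in Theorem~\ref{thm:filtration}, note that ${\sf K}^{\sf Y}_{2k+1}$ is the image of the realization map, and constrain it via the framing relations (Proposition~\ref{prop:twistedtheta}) together with Levine's squaring sequence $0\to\z\otimes\sL_{n+1}\to\z\otimes\sL'_{2n+2}\to\z\otimes\sL_{2n+2}\to 0$. The conclusion you land on --- a right--exact sequence with cokernel $\sL_{k+2}\otimes\z$ and kernel a quotient of $\sL_{(k+2)/2}\otimes\z$ when $k$ is even, vanishing when $k$ is odd --- is indeed the content of the theorem.

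There is, however, a genuine gap in how you propose to pin down the size of ${\sf K}^{\sf Y}_{2k+1}$. The paper's decisive ingredient is a homomorphism $\kappa_{2k+1}\colon{\sf K}^{\sf Y}_{2k+1}\to\z\otimes\sL_{k+2}$ constructed intrinsically in the homology--cylinder setting, via the Johnson homomorphism and the snake--lemma map $s\ell_{2k+2}\colon\sD_{2k+2}\to\z\otimes\sL_{k+2}$, together with a proof of surjectivity of $\kappa$ by a direct clasper computation of the Artin--type representation of a twisted clasper surgery (Proposition~\ref{prop:ksurjective} via Lemmas~\ref{lem:commutator} and~\ref{lem:twistedcommutator}). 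In the $4n-1$ case this gives two opposite surjections between finite $\z$--vector spaces, forcing both to be isomorphisms; in the $4n+1$ case, commutativity with the squaring sequence (Remark~\ref{rem:commutative-diagram}) identifies $\ker\kappa_{4n+1}$ with $\operatorname{im}a_{n+1}$ and yields the stated right--exact sequence. You replace this with ``detecting the classes by higher--order Sato--Levine invariants,'' transported from the Whitney--tower setting via the maps $\mathrm{C},\mathrm{H}$ of Section~\ref{sec:map} --- but that transport is circular here. The statement that $\mathrm{C}_n,\mathrm{H}_n\colon\sSY_n\to\sY_n$ are (near) isomorphisms is Theorem~\ref{thm:H}, and its proof in degree $4k-1$ explicitly cites Theorem~\ref{thm:improved-filtration}; moreover the isomorphism $\sSY_n\cong\sW_n$ that would let you quote \cite{CST1,CST2} verbatim is only announced, not proved, in this paper. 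Without a self--contained substitute for Proposition~\ref{prop:ksurjective}, your argument gives only the upper bound that $\sY_{4n\pm1}$ is a quotient of $\widetilde{\cT}_{4n\pm1}$, not the claimed exact sequences.
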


Theorem~\ref{thm:improved-filtration} will be proven in section~\ref{sec:homology-cylinders}.
The calculation of ${\sf K}^{\sf Y}_{4n+1}$ is thus reduced to the calculation of $\Ker(a_{n+1})$. As explained at the end of Section~\ref{sec:map}, this is the precise analog of the question ``how nontrivial are the higher-order Arf invariants?'' in the setting of Whitney tower filtrations of classical links (compare the $a_{n+1}$ in Theorem~\ref{thm:improved-filtration} with the maps $\alpha_{n+1}$ defined in \cite{CST0}).

\begin{conj}\label{conj:a-maps}
The homomorphisms $a_{n+1}$ are injective for all $n\geq 1$.
\end{conj}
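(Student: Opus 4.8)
The plan is to treat Conjecture~\ref{conj:a-maps} as the homology-cylinder incarnation of the classical problem of the nontriviality of the higher-order Arf invariants, i.e.\ of the injectivity of the maps $\alpha_{n+1}$ of \cite{CST0}, and to attack it by producing a ``higher-order Arf invariant'' of homology cylinders that splits $a_{n+1}$. By analogy with Theorem~\ref{thm:Artin-kernel} and with \cite{CST0,CST2}, the image of $a_{n+1}$ in ${\sf K}^{\sf Y}_{4n+1}\subseteq {\sf Y}_{4n+1}$ is represented by internal band sums of iterated Bing-doubles of the figure-eight knot, organized along rooted trees realizing the brackets that additively generate $\sL_{n+1}$, with the $\z$ appearing because only the Arf invariant of the Bing-doubled knot survives. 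First I would transport the statement along the maps already available: using that $\mathrm{H}_{4n+1}\colon\sSY_{4n+1}\to{\sf Y}_{4n+1}$ is surjective (Theorem~\ref{thm:H}) I would lift each class in the image of $a_{n+1}$ to a string link in $\SY_{4n+1}$, and then, composing with the closure from string links to classical links (under which $\sSW_n\cong\sW_n$, Theorem~\ref{thm:nilpotent}), I would push it to an iterated Bing-double of the figure-eight knot in $\W_{4n+1}$. The point of this reduction is to match $a_{n+1}$ with (a summand of) the classical $\alpha_{n+1}$, so that it suffices to detect these Bing-double classes in the classical (twisted) Whitney tower filtration.

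Next I would try to construct a left inverse ${\sf Y}_{4n+1}\to\sL_{n+1}\otimes\z$, or at least a homomorphism out of ${\sf K}^{\sf Y}_{4n+1}$, whose composite with $a_{n+1}$ is the identity, using one of three natural sources. (a)~A secondary obstruction theory: given a homology cylinder bounding a twisted Whitney tower of order $4n$, one measures the $\z$-valued obstruction to raising its order, recorded against the tree labels — this is how the higher-order Arf invariants are defined in \cite{CST0}, and one would adapt that construction to the clasper setting. (b)~A mod-$2$ quadratic refinement of the Milnor/Johnson invariants, designed so that the extra $\z^m\otimes\sL_{n+1}$ contribution is the defect between a quadratic form and its associated bilinear form; the framing relations of Proposition~\ref{prop:twistedtheta} are the input making such a quadratic enhancement non-split. (c)~Dualizing: use Proposition~\ref{prop:twistedtheta} together with the exact sequence of Theorem~\ref{thm:improved-filtration} to pin down the cokernel side of $a_{n+1}$ and then extract the detecting invariant by a diagram chase against the Garoufalidis--Levine description of ${\sf J}_{4n+1}$ from \cite{GL,HL2}. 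As a concrete first case I would settle $n=1$: show $a_2\colon\sL_2\otimes\z\to{\sf K}^{\sf Y}_5$ is injective by an explicit obstruction computation, paralleling the known injectivity of $\alpha_2$, i.e.\ the nontriviality of the ordinary Arf invariant.

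The hard part will be exactly what it is in the classical case: the classes in the image of $a_{n+1}$ are annihilated by all Milnor/Johnson invariants, by all finite-type invariants below the relevant order, and by the symplectic-derivation Lie algebra that governs ${\sf J}_{4n+1}$, so none of the presently available invariants of homology cylinders can see them, and a genuinely new $\z$-valued invariant sensitive to the $\arf$ invariant of the Bing-doubled knot is required. I therefore expect that, short of a breakthrough, the reachable outcomes are (i)~partial injectivity results for small $n$ or after tensoring with suitable coefficients, and (ii)~a clean equivalence between Conjecture~\ref{conj:a-maps} and the classical conjecture that the $\alpha_{n+1}$ are injective — the reductions above supply one implication, while the converse would require showing that the image of the classical $\alpha_{n+1}$ embeds faithfully into ${\sf K}^{\sf Y}_{4n+1}$, for which the map $\mathrm{H}$ together with the results of \cite{HL2,GL} are the natural tools.
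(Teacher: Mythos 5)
This statement is labeled a \emph{conjecture} in the paper, and the paper offers no proof of it; Section~\ref{sec:map} (see the diagram at the end of \S\ref{subsec:higher-order-Arf-for-homology-cylinders}) only establishes that Conjecture~\ref{conj:a-maps} is \emph{equivalent} to the existence of a lift $\widetilde{\mu}^{\sf Y}_{4k+1}\colon{\sf Y}_{4k+1}\to\widetilde{\sf D}_{4k+1}$ splitting $\widetilde{\eta}_{4k+1}$, and that its truth would imply the existence of higher-order Arf invariants for links. You correctly identify the conjecture as the homology-cylinder incarnation of the higher-order Arf problem, correctly identify the image of $a_{n+1}$ as represented by Bing-double/band-sum classes (cf.\ Theorem~\ref{thm:Artin-kernel} and \cite[Lemma~12]{CST2}), and you are honest that no proof is at hand; your proposed endpoints --- an explicit low-order computation and an equivalence with the classical conjecture on the $\alpha_{n+1}$ --- are realistic and match the spirit of the paper's own discussion.

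Two cautions on the reduction step. First, the closure $\sigma\mapsto L_\sigma$ identifies $\sSW_n\cong\sW_n$ (Theorem~\ref{thm:nilpotent}), but the passage you actually need starts in $\sSY_n$; the paper only gives the inclusion $\SY_n\subseteq\SW_n$ (Proposition~\ref{prop:inclusions}), and the assertion that $\sSY_n\to\sSW_n$ is an isomorphism is \emph{announced, not proved} here. Without it you only get a map out of $\sSY_{4n+1}$, not an identification, so "push to $\W_{4n+1}$'' needs the unpublished input. Second, the logical direction matters: the paper's diagram shows Conjecture~\ref{conj:a-maps} $\Rightarrow$ higher-order Arf invariants exist for links (via the factorization through $\widetilde{\mu}_{4k+1}$), so transporting to the classical side and invoking $\alpha_{n+1}$ would require precisely the converse implication, as you note; that converse is not supplied by $\mathrm{H}$ or by \cite{HL2,GL}, since those detect only the Milnor/Johnson part, which vanishes on $\operatorname{Im}(a_{n+1})$ by construction. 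Finally, it is worth flagging (in support of your caution) that the paper records Habiro's computation ${\sf K}^{\sf Y}_1\cong(\z\otimes{\sf L}'_2)\oplus\z$, showing the naive statement fails at $n=0$; the extra $\z$ (Rochlin invariant) is a concrete warning that clasper-surgery realization maps can have unexpected kernel/cokernel phenomena not visible from the tree group $\widetilde{\cT}$.
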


Underlying all of the above results are two key aspects of our recent classification of the Whitney tower filtrations 
on classical links (as surveyed in \cite{CST0}): The first aspect is a strengthening of the tree-valued intersection theory for Whitney towers that incorporates the above-mentioned framing relations, which reflect geometric moves that affect obstructions to framing Whitney disks in a Whitney tower \cite{CST1}. Second is the affirmation in \cite{CST3} of a combinatorial conjecture of Levine \cite{L2} which allows for a complete computation of the resulting target groups for the Whitney tower intersection invariants. We briefly touch on the relevance of these points next, with details given in the body of the paper: 

The abelian group $\cT=\cT(m)$ is the additively generated by oriented unitrivalent trees, where univalent vertices are labeled by elements of the index set $\{1,\ldots,m\}$, modulo IHX relations and AS antisymmetry relations $t+(-t)=0$, where $-t$ denotes the tree $t$ with the opposite orientation (see e.g.~\cite{CST1,CST3}). Here the {\em orientation} of a tree is given by cyclic orderings of the edges at all trivalent vertices, and trees are considered up to label-preserving isomorphisms.
The \emph{order} of a tree is the number of trivalent vertices. Since the AS and IHX relations are homogeneous with respect to order, $\cT$ inherits a grading $\cT=\oplus_n\cT_n$, where $\cT_n$ is the free abelian group on order $n$ trees, modulo AS and IHX relations. 

The resolution \cite{CST3} of the Levine Conjecture allows for a complete computation of $\cT_n$ for all $n$ as free abelian groups of known rank in even orders, and having only $2$-torsion of known rank in odd orders.
Analogous groups with rational coefficients were well understood previously, and play a central role in finite type theories (e.g.~\cite{GGP,H,HM,Hab}). The move to integral coefficients is critical for extracting combinatorial/topological information from invariants such as Milnor's link invariants and the closely related
Artin representation.

Using surgeries along tree claspers, Habiro \cite{Hab} constructed surjective realization maps
\[
\theta_n :\cT_n \sra \sY_n
\]
where $\Y_n$ filters either $\HC(0,m+1)$ or $\HC(g,1)$ (which are equivalent for $n\geq 1$ by the above discussion).
As described in \cite{CST1}, the quotients $\wT_n$ of the $\cT_n$ by the framing relations are natural targets for the intersection invariants associated to the obstruction theory of (framed) Whitney towers, and play a crucial role in the computation of $\sW_n$. (See Definition~\ref{def:reduced-tree-group} below for the definition of $\widetilde{\cT}_n$.)

A lot of the work of Section~\ref{sec:homology-cylinders} goes into translating the geometry of the framing relations into the clasper language, culminating in the following result which is used in proving our main Theorem~\ref{thm:improved-filtration} above.

\begin{prop}\label{prop:twistedtheta}
Habiro's realization map $\theta_{2n+1}$ factors through the quotient $\wT_{2n+1}$ for $n\geq 1$.
\end{prop}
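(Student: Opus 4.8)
The plan is to show that the realization map $\theta_{2n+1}\colon\cT_{2n+1}\to\sY_{2n+1}$ kills the framing relations, i.e.\ that for each generator of the subgroup by which one quotients $\cT_{2n+1}$ to obtain $\wT_{2n+1}$, the corresponding clasper surgery is realized by a string link (equivalently a homology cylinder) lying in $\Y_{2n+2}$. Recall from \cite{CST1} that the framing relations in odd order are the \emph{twisted IHX relations} together with the \emph{framing (interior twist) relations}: a tree $t$ with a ``$\infty$''-decorated univalent vertex corresponds to an interior twist on a Whitney disk, and the relevant relation sets $2\cdot t_{\text{untwisted}}$ equal to a boundary-twist term plus lower-complexity corrections. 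So the first step is to translate each such relation, living a priori in the Whitney-tower world, into an identity among clasper surgeries modulo $\Y_{2n+2}$.

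The key steps, in order, are as follows. First, I would set up the precise clasper-theoretic avatar of an interior twist: following Habiro's calculus and the conventions of Section~\ref{sec:homology-cylinders}, a framed tree clasper whose leaves are null-homotopic in the relevant complement realizes $\theta(t)$, and changing the framing on one leaf by $1$ changes the surgery result by a clasper of one higher order (a ``twist move''), which is the clasper analog of the geometric interior-twist move on Whitney disks. Second, I would identify the order $2n+1$ tree $t$ that is subject to the framing relation as one which has a repeated ``symmetric'' sub-structure — in odd order $2n+1$, the framing relation involves exactly the trees that are boundary-twists of order $n+1$ subtrees doubled up — and show, using the AS/IHX calculus on claspers (Habiro's moves 1--12, or the ones recorded in \cite{CST1,CST3}), that $2\theta_{2n+1}(t)$ is realized by a clasper surgery that can be isotoped and split so that it is visibly the result of surgery on an order $2n+2$ clasper, hence trivial in $\sY_{2n+1}$. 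Third, I would check naturality: that this holds for all label configurations and both for $\HC(0,m+1)$ and $\HC(g,1)$, which is automatic for $n\geq 1$ by the equivalence of the two settings already invoked in the excerpt. Assembling these, every framing-relation generator maps to $0$ in $\sY_{2n+1}$, so $\theta_{2n+1}$ descends to $\wT_{2n+1}\twoheadrightarrow\sY_{2n+1}$.

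The main obstacle is the second step: faithfully matching the \emph{interior twist / boundary twist} bookkeeping of the framed Whitney tower obstruction theory with the \emph{leaf-framing} bookkeeping of tree claspers. The subtlety is that an interior twist on a Whitney disk produces, after resolving, not just a single lower-order tree but a specific signed combination (the ``$\operatorname{twist}$'' term together with a Jacobi/IHX correction), and one must verify that the clasper picture produces exactly the same combination and not merely something in the same order. I expect this to require a careful local model — a standard genus-one piece carrying the twist — together with repeated application of the clasper splitting lemma and the IHX clasper move, keeping precise track of signs via the AS orientation convention on $\cT$. The passage from ``order $2n+2$'' back down to the claim of triviality in $\sY_{2n+1}=\Y_{2n+1}/(\text{order }2n+2\text{ surgeries})$ is then immediate. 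A secondary, more routine point is confirming that the leaves can be made null-homotopic (not merely null-homologous) so that one is genuinely in the regime where $\theta$ is defined; this is where the hypothesis $n\geq 1$ and the identification $\HC_1=\Y_1=\J_1$ are used.
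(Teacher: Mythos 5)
Your proposal has a genuine gap at its central step. You propose (step two) to show that $2\theta_{2n+1}(t)$ is realized by an order $2n+2$ clasper surgery, where $t$ is a ``symmetric'' tree of the form $\tree{i}{J}{J}$ appearing in the framing relation. But any such symmetric tree is \emph{already} $2$-torsion in $\cT_{2n+1}$: swapping the two identical $J$--subtrees is a label-preserving isomorphism of $t$ that reverses the orientation, so $t=-t$ and $2t=0$ by AS. Thus ``$2\theta_{2n+1}(t)=0$ in $\sY_{2n+1}$'' is the tautology $\theta_{2n+1}(0)=0$ and carries no content. The framing relator $\Delta(s)=\sum_v \ell(v)\!\tree{}{T_v(s)}{T_v(s)}$ (sum over univalent vertices of an order-$n$ tree $s$) is a signed \emph{sum} of such symmetric trees, and it is this sum -- not twice any individual term -- that must be shown to vanish under $\theta$. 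Your proposal never addresses why the sum dies.

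The paper's proof supplies the missing mechanism. It introduces \emph{twisted claspers} (order-$n$ tree claspers with one $+1$-framed unknotted leaf) and develops a calculus (Lemmas~\ref{lem:bordism}--\ref{lem:nullhomologous}, \ref{lem:independent}) culminating in Lemma~\ref{lem:rightorder}: a twisted clasper with $\iinfty$-tree $\tree{\iinfty}{h}{J}$ represents, up to $(2n)$-equivalence and homology bordism, the same surgery as an untwisted clasper of tree type $\tree{h}{J}{J}$. This realizes the ``boundary-twist relation'' geometrically. The group $\wT_{2n-1}$ is then re-presented (Lemma~\ref{lem:reduced-Tau-groups-with-twisted-trees}) using both ordinary order $2n-1$ trees and order $n$ $\iinfty$-trees subject to IHX, AS, and the boundary-twist relation, and the sum $\Delta(s)$ is seen to vanish precisely because, after converting each symmetric term to its $\iinfty$-tree avatar, the resulting sum over vertices is an IHX consequence among $\iinfty$-trees, and IHX is realized by claspers. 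You gesture at matching ``interior/boundary twist bookkeeping'' but conflate the interior-twist relation in $\cT^\iinfty$ (even orders) with the framing relation in $\cT_{2n+1}$ (odd orders), and you do not produce the key geometric move (twisted-leaf slide plus zip) that converts a $+1$-framed unknotted leaf into the doubled tree $\tree{h}{J}{J}$. Without that conversion, and without observing that $\Delta(s)$ then becomes an IHX consequence of $\iinfty$-trees, the argument does not go through.
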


In even orders $\widetilde{\cT}_{2k}=\cT_{2k}$ by definition (since the framing relations only occur in odd orders), 
and the main result of \cite{CST3} also computes the groups $\widetilde{\cT}_n$ for all $n$. These computations are essential to the proofs of Theorem~\ref{thm:H} and Theorem~\ref{thm:improved-filtration}. Further implications of \cite{CST3} appear
in the detailed description of the relationship between the string link and homology cylinder settings given in Section~\ref{sec:map}; as well as in the characterization of the kernel of $\operatorname{Artin}^\iinfty_{n}$ given by Theorem~\ref{thm:Artin-kernel} and Corollary~\ref{cor:Artin-kernel} as explained in Section~\ref{sec:string-links}. 

The paper is organized as follows. In section~\ref{sec:string-links}, we present basic Whitney tower techniques in the setting of string links and prove Theorems~\ref{thm:nilpotent} and \ref{thm:Artin-kernel}, as well as Corollary~\ref{cor:Artin-kernel}. Section~\ref{sec:homology-cylinders} is devoted to clarifying and proving Theorem~\ref{thm:improved-filtration}, and is mostly occupied with proving the just-stated Proposition~\ref{prop:twistedtheta}.  Finally in section \ref{sec:map}, we compare string links with homology cylinders using Habegger's map, proving Theorem~\ref{thm:H} and explaining the connection between Conjecture~\ref{conj:a-maps} above and higher-order Arf invariants for links \cite{CST0,CST1}.

{\bf Acknowledgments:} We thank Max-Planck-Institut-f\"ur-Mathematik for their generous support. Parts of this paper were conceived and/or written while the first two authors were visting the third at MPIM in the Fall of 2010.

\section{The string link Artin Representation and twisted Whitney towers}\label{sec:string-links}
Recall from e.g.~\cite{CST1} that a Whitney tower $\cW\subset B^4$ bounded by a link $L\subset S^3=\partial B^4$ consists
of properly immersed (\emph{order zero}) disks bounded by the link components, together with iterated collections of Whitney disks of increasing \emph{order}
pairing up lower order intersections. An order $n$ Whitney tower $\cW$ has all intersections of order less than $n$ paired, and each unpaired order $n$ intersection point in $\cW$ determines an order $n$ generator of $\cT_n$ which sits as a subset of $\cW$, with each trivalent vertex contained in a Whitney disk interior (which induces the vertex-orientation), and each edge a sheet-changing path across a Whitney disk boundary. The univalent vertices of generators of $\cT_n$ are labeled by the components of $L$, and
summing over all order $n$ intersections determines an intersection invariant $\tau_n(\cW)\in\cT_n$, with the vanishing of $\tau_n(\cW)$
sufficient to conclude that $L$ bounds an order $n+1$ Whitney tower. 

An order $n$ \emph{twisted} Whitney tower only differs in that
certain Whitney disks (of order at least $n/2$ for $n$ even) are allowed to be twisted rather than framed, and such an order $n$ twisted
$\cW$ has an analogous intersection invariant $\tau^\iinfty_n(\cW)\in\cT^\iinfty_n$, where (in even orders) the abelian group $\cT^\iinfty_n$ is a quadratic refinement of $\cT_n$, defined using trees which may have an $\iinfty$ symbol labeling a univalent vertex. (In section 3, we will introduce \emph{twisted claspers}, which are allowed to have a null-homotopic twisted leaf.)

This obstruction theory was used in \cite{CST1} to show that the sets  $\W_n^{(\iinfty)}$ 
of links bounding order $n$ (twisted) Whitney towers, modulo order $n+1$ (twisted) Whitney tower concordance, form groups 
$\sW_n^{(\iinfty)}$ under component-wise band sums.  As surveyed in \cite{CST0}, the sequence of papers \cite{CST1,CST2,CST3} computes the $\sW_n^{(\iinfty)}$ in terms of Milnor invariants together with \emph{higher-order Arf invariants}
taking values in finite $2$-groups, with the caveat that
the image of the higher-order Arf invariants is not known. (The framed case requires \emph{higher-order Sato-Levine invariants} as well, which are certain mod $2$ projections of Milnor invariants of higher order.)

Let $\sigma$ be a string link with $m$ strands embedded in $B=D^2\times[0,1]$. By a theorem of Stallings \cite{Sta}, the inclusions 
$(D^2\setminus \{\mbox{$m$ points}\})\times\{i\}\hookrightarrow B\setminus \sigma$ for $i=0,1$ induce isomorphisms on all lower central quotients of the fundamental groups. In fact, the induced automorphism of the lower central quotients 
$F/F_n$ of the free group $F=\pi_1(D^2\setminus \{\mbox{$m$ points}\})$ is explicitly characterized by conjugating the meridianal generators by the string link longitudes. Let $\operatorname{Aut}_0(F/F_n)$ consist of those automorphisms of $F/F_n$ which are defined by conjugating each generator and which fix the product of generators. This leads to the \emph{Artin representation} 
$$\operatorname{Artin}_n\colon\mathbb{SL}\to \operatorname{Aut}_0(F/F_{n+2})$$ where $\mathbb{SL}$ is the set of concordance classes of framed string links. See \cite{HL,HL2} for more details. Using the correspondence between string links and homology cylinders over planar surfaces, we define the \emph{Johnson filtration}  by letting $\SJ_n=\Ker\{\operatorname{Artin}_n\colon\mathbb{SL}\to \operatorname{Aut}_0(F/F_{n+2})\}$. 

The set of string links has a well-defined monoid structure given by stacking, which, modulo concordance, becomes a (nonabelian \cite{LD}) group $\mathbb{SL}$. The Whitney tower filtrations have analogous definitions in this context, giving rise to filtrations $\mathbb{SW}_n$ and $\mathbb{SW}_n^\iinfty$ of $\mathbb{SL}$. More precisely, given a string link $\sigma$, there is a standard closure $L_\sigma\subset S^3$ obtained by connecting the endpoints by fixed arcs in the boundary of $B$ and taking a standard embedding $B\subset S^3$ (thinking of $B=B\times\{0\}\subset B\times[0,1]$, with $S^3=\partial (B\times[0,1])$). Then a string link $\sigma$ is by definition in $\mathbb{SW}_n^{(\iinfty)}$ if $L_\sigma$ is the boundary of a (twisted) Whitney tower of order $n$ in $B\times[0,1]\cong B^4$. 

 For the statement of Theorem~\ref{thm:Artin-kernel} we need the following operation of Bing-doubling on a framed string link, represented by a tangle $T$. Here the four parallel arcs in the tangle $T'$  that replace the arc in $T$ are assumed to follow the given framing.
\begin{figure}[h]
\begin{center}
$$
\begin{minipage}{.45\linewidth}
\includegraphics[width=\linewidth]{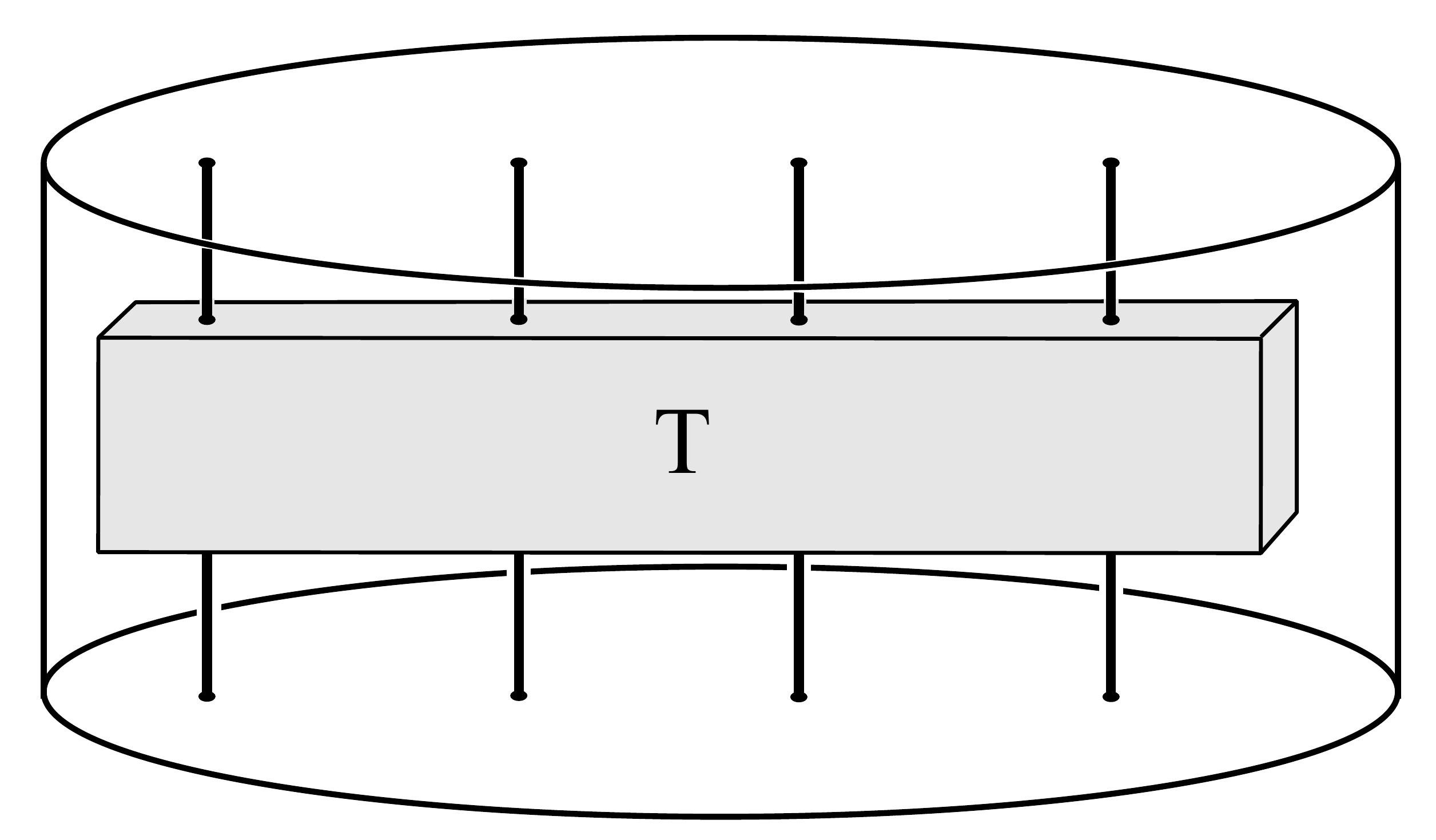}
\end{minipage}
\longrightarrow
\begin{minipage}{.45\linewidth}
\includegraphics[width=\linewidth]{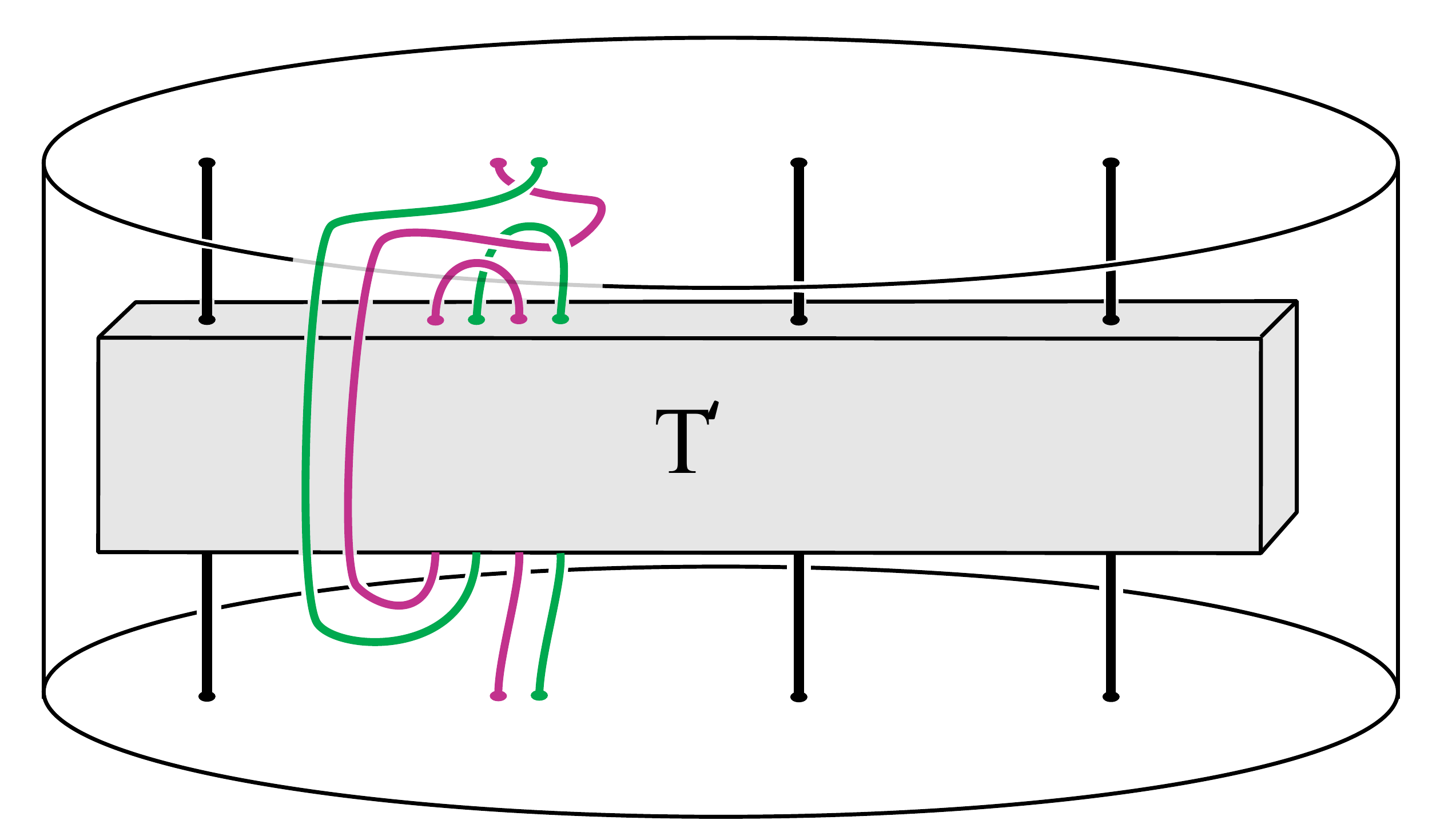}
\end{minipage}
$$
\caption{Bing-doubling a component of a framed string link. }\label{fig:bingstring}
\end{center}
\end{figure}

In the statement of Corollary~\ref{cor:Artin-kernel}, we used the following notions.
\begin{defn} \label{def:string link}
A string link $\sigma$ is a \emph{boundary string link} if the components of the standard closure $L_\sigma$ bound disjoint surfaces in the $3$--ball $B$; and a 
\emph{$\pi_1$-null string link} is a string link $\sigma$ whose standard closure $L_\sigma$ bounds a surface $\Sigma$ in the $4$--ball $B^4=B\times[0,1]$ such that $\pi_0(L_\sigma)\to\pi_0(\Sigma)$ is a bijection and
which for which there is a push-off inducing the trivial homomorphism $\pi_1(\Sigma)\to\pi_1(B^4\setminus\Sigma)$. Note that Bing-doubling preserves boundary links: the standard closure of the string link in Figure~\ref{fig:bingstring} is designed to look like Figure 21 in \cite{CST2}, for which there are evident disjoint Seifert surfaces. Given a string link, we define an internal band sum between two of the components $i,j$ as follows. Choose an arc in the $3$--ball $B$ between the $i$ and $j$ strands, which will guide the band-sum.
Add the boundary arc which makes the standard closure of component $j$, and push this closure into $B$ slightly. This gives us a tangle where strand $j$ is now a closed component. Finally do surgery along the chosen arc. In the case that the string link is a boundary string link, choose the arc to avoid the bounding surfaces. Then by construction the internal band sum will also be a boundary string link. An example is given in Figure~\ref{fig:internalbandsum}.
\end{defn}

\begin{figure}
\begin{center}
\subfloat[]{\includegraphics[width=.45\linewidth]{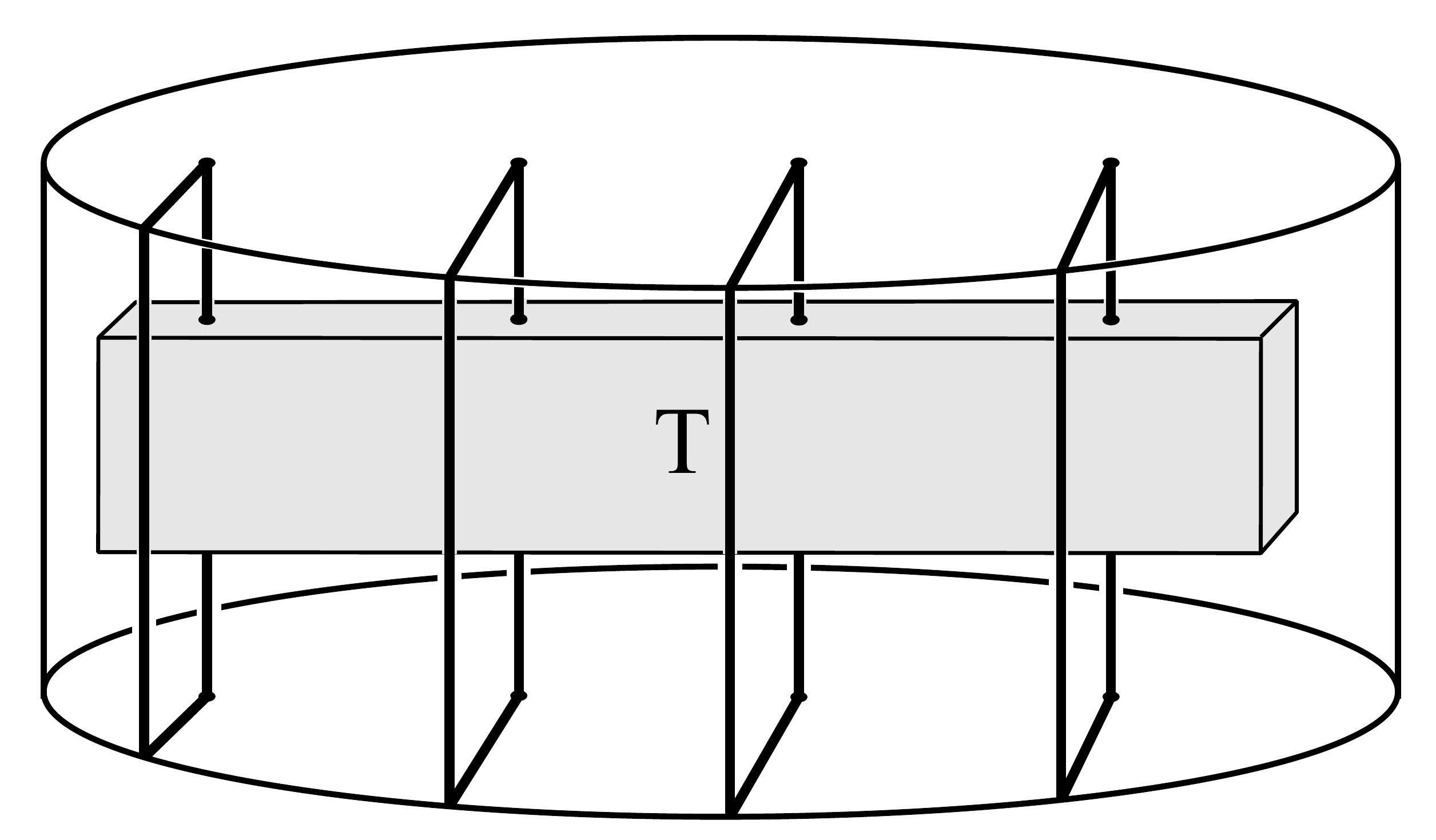}}\hfill\subfloat[]{\includegraphics[width=.45\linewidth]{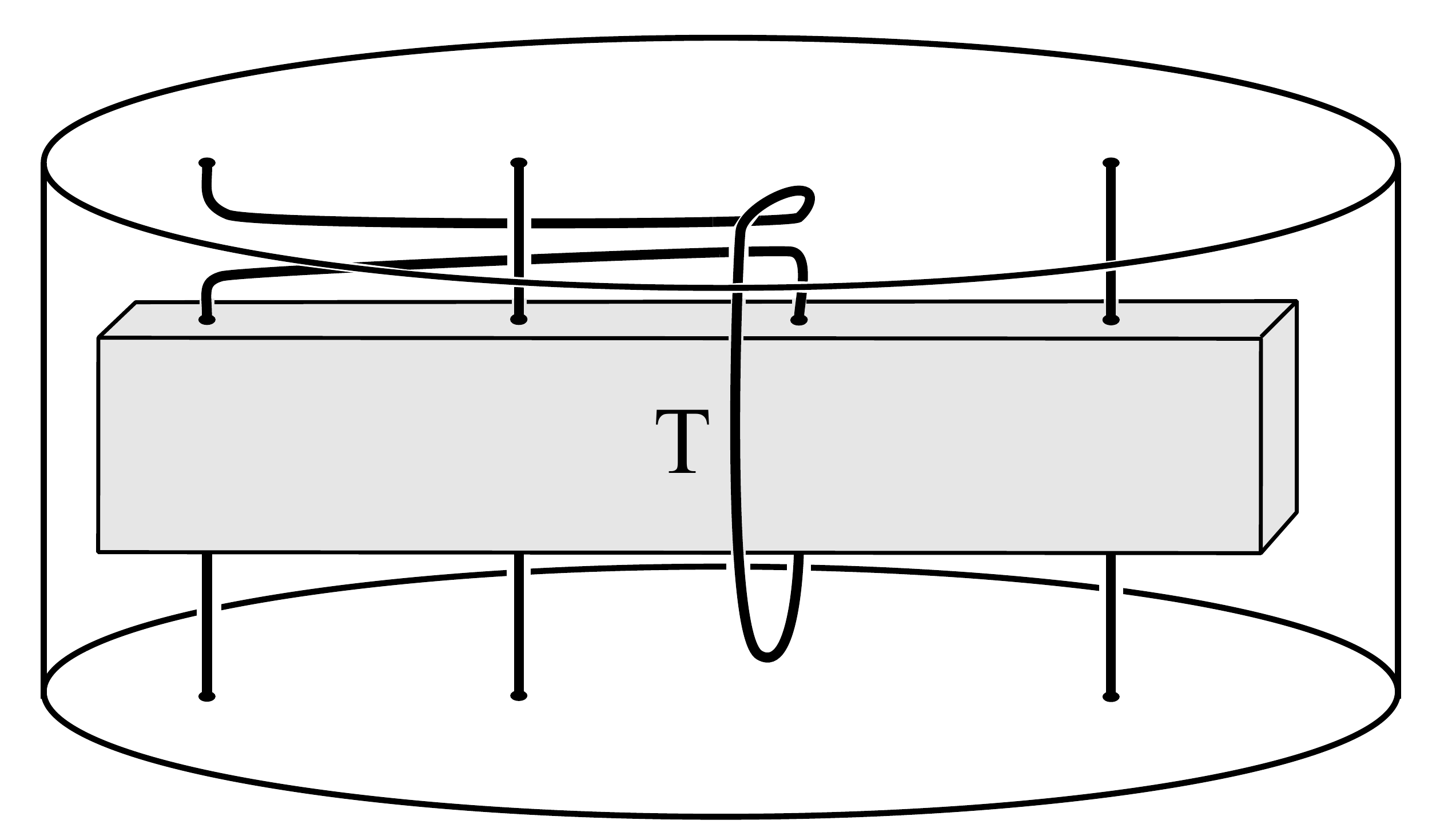}}
\caption{The standard closure of a string link, and an internal band sum between the first and third component. If the link in (A) bounds disjoint surfaces in the given $3$--ball, then the (B) link is a boundary string link, since the surfaces bounding the first and third components on the left can be joined by a well-chosen band.}\label{fig:internalbandsum}
\end{center}
\end{figure}

\subsection{Proof of Corollary~\ref{cor:Artin-kernel}}\label{subsec:proof-corollary-Artin-kernel} The proof is as follows: All of the four sets listed in the statement of Corollary~\ref{cor:Artin-kernel} are certainly contained in the kernel of $\operatorname{Artin}^\iinfty_{n}$, since Milnor invariants vanish on all of these classes, and the 
order $n$ Artin representation corresponds to the universal Milnor invariants of order $n$ (length $n+2$) as described in section~3 of \cite{HL2}. On the one hand, these sets are increasing in size, and on the other hand, the smallest set is equal to the kernel of $\operatorname{Artin}^\iinfty_{n}$ by the proof of Theorem~\ref{thm:Artin-kernel} below.

\subsection{Proof of Theorems~\ref{thm:nilpotent} and \ref{thm:Artin-kernel}}\label{subsec:proof-thm-Artin-ker}

String links $\sigma$ and $\sigma '$ are said to be \emph{order $n$ (twisted) Whitney tower concordant} if $\sigma\subset B\times\{0\}$ cobounds an order $n$ (twisted) Whitney tower in $B\times I$ with $\sigma'\subset B\times\{1\}$ (together with product arcs in $\partial B\times I$ connecting the boundaries $\partial \sigma$ and $\partial \sigma'$). It is easy to see that $\sigma$ is order $n$ (twisted) Whitney tower concordant to the identity string link if and only if $\sigma\in\mathbb{SW}_n^{(\iinfty)}$ (which by definition means that $L_\sigma\in\mathbb{W}_n^{(\iinfty)}$).
We observe that all results from \cite{CST1,CST2} for Whitney tower filtrations of link concordance also apply to string link concordance. In particular, the obstruction theory of \cite{CST1} implies that if $\sigma$ bounds an order $n$ (twisted) Whitney tower $\cW$ in $B\times[0,1]$ with $\tau_n^{(\iinfty)}(\cW)=0\in\cT_n^{(\iinfty)}$, then $\sigma\in\mathbb{SW}_{(n+1)}^{(\iinfty)}$. And similarly, if $\cW$ is a (twisted) order $n$ Whitney concordance from $\sigma$ to the identity with $\tau_n^{(\iinfty)}(\cW)=0\in\cT_n^{(\iinfty)}$, then $\sigma\in\mathbb{SW}_{(n+1)}^{(\iinfty)}$.

Clearly $\mathbb{SW}_n$ and $\mathbb{SW}_n^\iinfty$ are closed under multiplication, since order $n$ (twisted) Whitney towers on the factors of a product stack together to form an order $n$ (twisted) Whitney tower on the product. 
Also, the reflection $\overline{\cW}$ of any order $n$ (twisted) Whitney tower $\cW$ bounded by $\sigma$ is an order $n$ (twisted) Whitney tower bounded by the concordance inverse $\bar{\sigma}$ of $\sigma$. Note that Section~3 of \cite{CST1} implies that $\tau_n^{(\iinfty)}(\cW)=-\tau_n^{(\iinfty)}(\overline{\cW})$ for any such  $\cW$ and $\overline{\cW}$.

The normality and centrality statements will follow from the following basic observation:
If $\sigma_1$ cobounds an order $n$ Whitney tower $\cW_1$ with the identity string link $\sigma_{\id}$, then for any string link $\sigma_2$, the conjugate string link $\sigma_2\sigma_1\bar{\sigma_2}$ cobounds an order $n$ Whitney tower $\cW_2$ with $\sigma_{\id}$, such that $\tau_n(\cW_1)=\tau_n(\cW_2)\in\cT_n$. 
Such a $\cW_2$ is constructed by first stacking $\cW_1$ between product concordances of $\sigma_2$ and $\bar{\sigma_2}$ to get an order $n$ Whitney tower concordance from $\sigma_2\sigma_1\bar{\sigma_2}$ to $\sigma_2\sigma_{\id}\bar{\sigma_2}=\sigma_2\bar{\sigma_2}$; then composing with any concordance $\cW_0$ from $\sigma_2\bar{\sigma_2}$ to $\sigma_{\id}$. (The concordance $\cW_0$ is a Whitney tower of arbitrarily high order since the disjointly embedded disks are free of singularities.) 
Then $\tau_n(\cW_2)=\tau_n(\cW_1\cup\cW_0)=\tau_n(\cW_1)$, and in fact $\cW_2$ and $\cW_1$ have identical \emph{intersection forests} (disjoint unions of signed trees, see \cite{CST1}). This observation also holds in the twisted setting, and immediately implies that $\mathbb{SW}_n$ and $\mathbb{SW}_n^\iinfty$ are normal subgroups of $\mathbb{SL}$.

Now suppose that $\sigma_1\in\mathbb{SW}_n$. We claim $\sigma_1\sigma_2\bar{\sigma_1}\bar{\sigma_2}\in \mathbb{SW}_{n+1}$ for any $\sigma_2$. To see this let $\cW_1$ be any order $n$ Whitney tower concordance from 
$\sigma_1$ to $\sigma_{\id}$, and $\overline{\cW_1}$ be the reflection cobounded by $\bar{\sigma_1}$ and $\sigma_{\id}$. As in the observation of the previous paragraph, $\overline{\cW_1}$ extends to an order $n$ Whitney tower concordance 
$\cW_2$
from $\sigma_2\bar{\sigma_1}\bar{\sigma_2}$ to $\bar{\sigma_1}$ such that $\tau_n(\cW_2)=\tau_n(\overline{\cW_1})$. Reparametrizing $\cW_1$ (composing with a product concordance) and stacking  with $\cW_2$ yields an order $n$ Whitney tower concordance $\cW$ from $\sigma_1\sigma_2\bar{\sigma_1}\bar{\sigma_2}$ to $\sigma_{\id}$ with 
$$
\tau_n(\cW)=\tau_n(\cW_1)+\tau_n(\cW_2)=\tau_n(\cW_1)+\tau_n(\overline{\cW_1})=0\in\cT_n
$$
so $\sigma_1\sigma_2\bar{\sigma_1}\bar{\sigma_2}$ bounds an order $n+1$ Whitney tower
by the obstruction theory of \cite{CST1}.
This implies the nilpotence of the quotient groups $\mathbb {SL}/\mathbb{SW}_n$, and the analogous argument applies in the twisted setting. 

In order to see that the string link groups are isomorphic to the corresponding link groups, 
notice we have a surjective map $\sigma\mapsto L_\sigma$ from $\mathbb {SL}$ to the set $\mathbb L$ of concordance classes of links in the $3$--sphere, which induce surjections $\mathbb {SL}/\mathbb{SW}^{(\iinfty)}_n\twoheadrightarrow \mathbb L/\mathbb W^{(\iinfty)}_n$. 
The image $L_\sigma\subset S^3$ of any $\sigma$ in the kernel bounds a (twisted) Whitney tower of order $n+1$ in $B^4$, which becomes a (twisted) Whitney tower of order $n+1$ bounded by $\sigma$ via the operation of choosing a disk-basing of $L_\sigma$ \cite{HL} and cutting $S^3$ open along a thickening of a wedge of arcs connecting the components of $L_\sigma$. We thus conclude that $\sSW^{(\iinfty)}_n=\sW^{(\iinfty)}_n$ as claimed in Theorem~\ref{thm:nilpotent}.

To see that $\SW^\iinfty_n \subset \SJ_n$, i.e.~that $\Artin_{n}^\iinfty(\mathbb{SW}^\iinfty_{n})$ is trivial, we note that by computations in \cite[Thm.5]{CST2}, the longitudes of an element of $\mathbb{SW}^\iinfty_{n}$ all lie in the $(n+1)$th term of the lower central series, so that conjugation by such an element is trivial modulo the $(n+2)$th term. 

If $\Artin_n^\iinfty$ vanishes on $\sigma$, then all Milnor invariants of $\sigma$ up to order $n-1$ vanish. Since these Milnor invariants together with higher-order Arf invariants detect all nontrivial elements of $\mathbb{SL}/\mathbb{SW}^\iinfty_n$ (see \cite[Cor.1.16]{CST1}), the kernel of $\Artin_n^\iinfty$ is generated by links introduced in Lemma~12 of \cite{CST2} which have trivial Milnor invariants (because they are boundary links) but which realize all values of the higher-order Arf invariants. These are indeed (internal band sums of) iterated Bing-doubles of the figure-eight knot. See Figures~\ref{fig:bingstring} and ~\ref{fig:internalbandsum} for the Bing-doubling and internal band sum construction in the setting of string links.

The fact that the kernel is a $2$-group can be shown as follows. Let $\sigma$ be a string link in the kernel of $\Artin^{\iinfty}_n$.
For any order $k$ twisted Whitney tower $\cW$ bounded by $\sigma$ with $k<n$, it follows from \cite[Thm.1.13 and Prop.1.14]{CST1}
 (or \cite[Thm.7 and Prop.8]{CST2}), that $\tau^\iinfty_k(\cW)$ vanishes if $k\equiv 0,1,3\mod 4$ and $\tau^\iinfty_k(\cW)$ is $2$-torsion if $k\equiv 2\mod 4$. So if $\cW_\sigma$ is any order $2$ twisted Whitney tower bounded by $\sigma$, then $\sigma^2$ bounds $\mathcal W_{\sigma^2}$ formed by stacking two copies of $\cW_\sigma$, and $\tau^\iinfty_2(\mathcal W_{\sigma^2})=2\tau^\iinfty_2(\mathcal W_{\sigma})=0$. Hence $\sigma^2\in\mathbb{SW}^\iinfty_6$. But then $\tau^\iinfty_6(\mathcal W_{\sigma^4})=0$, where $\mathcal W_{\sigma^4}$ is the double of an order $6$ twisted Whitney tower bounding $\sigma^2$. Hence $\sigma^4\in\mathbb{SW}^\iinfty_{10}$, etc. In particular, if $4k-2>n$, then $\sigma^{2^k}=1\in \mathbb{SL}/{\mathbb{SW}^\iinfty_{n+1}}$. $\hfill\square$


\section{Comparing filtrations of the group of homology cylinders}\label{sec:homology-cylinders}
This section clarifies and proves Theorem~\ref{thm:improved-filtration} from the introduction, which significantly sharpens Levine's description of the relationship between the algebraically defined
Johnson filtration and the geometrically defined filtration based on the Goussarov-Habiro
theory of finite type $3$--manifold invariants and clasper surgery. As mentioned in the introduction, 
the central idea of the proof is to show that the realization map $\theta_{2n-1}\colon\cT_{2n-1}\to {\sf Y}_{2n-1}$ from Theorem~\ref{thm:Levine} induces a map $\widetilde{\theta}_{2n-1}$ on the quotient $\widetilde{\cT}_{2n-1}$ of $\cT_{2n-1}$ by the framing relations, as stated in Proposition~\ref{prop:twistedtheta}.
We shall recall the definition of $\widetilde{\cT}_n$ in Definition~\ref{def:reduced-tree-group} and prove Proposition~\ref{prop:twistedtheta} in Section~\ref{subsec:twistedtheta}.
This also leads to the formulation of higher-order Arf invariants
defined for certain homology cylinders (Remark~\ref{rem:higher-order-invariants}) analogous to those defined in the Whitney tower filtrations on links.

We briefly introduce the relevant concepts relating to homology cylinders. For background and related developments  the reader is referred to the
recent survey \cite{HabMass2} and its references.  

 Let $\Sigma_{g,1}$ denote the compact oriented surface of genus $g$ with one 
boundary component. Following Garoufalidis and Levine \cite{GL, L1}, we define a \emph{homology cylinder} over $\Sigma_{g,1}$ to be a compact oriented $3$--manifold $M$ equipped with a homeomorphism 
\[
m\colon \partial\left(\Sigma_{g,1}\times[-1,1]\right)\to \partial M
\]
  such that the two maps $m_{\pm}=m|_{\Sigma_{g,1}\times \pm 1}$, when composed with the inclusion $\partial M\subset M$, give an isomorphism on homology. This definition differs slightly from Habiro and Massuyeau \cite{HabMass}, who reserve the term ``homology cylinder" for those homology cylinders with a trival action on $H_*(\Sigma_{g,1})$. Conveniently, the two definitions coincide for positive order in the Johnson filtration, which is all we will consider anyway. Two homology cylinders $M_0$ and $M_1$ are said to be \emph{homology cobordant} if there is a compact oriented $4$--manifold $W$ with $\partial W=M_0\cup_{\partial} (-M_1)$, such that the inclusions $M_i\hookrightarrow W$ are homology isomorphisms. This defines an equivalence relation on the set of homology cylinders. 
Let $\HC(g,1)$ be the set of homology cylinders up to homology cobordism over $\Sigma_{g,1}$. $\HC(g,1)$ is a group via the ``stacking" operation.

Extending the Johnson homomorphisms on the mapping class group \cite{John}, Garoufalidis and Levine \cite{GL,L1} introduced an Artin-type representation
$\HC(g,1)\to A_0(F/F_{n+1})$ where $F$ is the free group on $2g$ generators, and $A_0(F/F_{n+1})$ is the group of automorphisms $\phi$ of $F/F_{n+1}$ which lift to an endomorphism  $\tilde{\phi}\colon F\to F$ which fixes the product $[x_1,y_1]\cdots [x_g,y_g]$ modulo $F_{n+2}$. Here $\{x_i,y_i\}_{i=1}^g$ is a standard symplectic basis for $\pi_1\Sigma_{g,1}\cong F$, and by benign abuse of notation we use the same letters for elements in $F/F_{n+1}$.  The \emph{Johnson filtration} (also known as the \emph{relative weight} filtration) of $\HC(g,1)$ is defined by $\mathbb {J}_n:=\Ker \{\HC(g,1)\to A_0(F/F_{n+1})\}$.  Define the associated graded group ${\sf J}_n=\mathbb J_n/\mathbb J_{n+1}$. In \cite{GL} it shown that ${\sf J}_n\cong\sD_n$, where
$\sD_n$ is the kernel of the bracket map $\sL_1\otimes\sL_{n+1}\to\sL_{n+2}$ with $\sL_k$ the degree
$k$ component of the free Lie algebra (over $\Z$) on the rank $2g$ free abelian group $F/F_2\cong H_1(\Sigma_{g,1})$.

On the other hand, there is a filtration of $\HC(g,1)$ related to the Goussarov-Habiro theory of finite type $3$--manifold invariants \cite{GGP,Hab}: Define the relation of $n$-equivalence (also known as 
$A_n$-equivalence or $Y_n$-equivalence) to be generated by the following move: $M\sim_n M'$ if $M'$ is 
diffeomorphic to $M_C$, surgery on some connected graph clasper $C\subset M$ whose associated graph has at least  $n$ internal vertices. 
The \emph{$Y$-filtration} of $\HC(g,1)$ is defined by taking $\mathbb Y_n$ to be the subgroup of all homology cylinders $n$-equivalent to the trivial product $\Sigma_{g,1}\times I$, and we denote the associated graded groups by ${\sf Y}_n:=\mathbb Y_n/\sim_{n+1}$. 

As Levine observes, $\mathbb Y_n\subset \mathbb J_n$, giving rise to a map of associated graded groups ${\sf Y}_n\to {\sf J}_n$. Rationally, Levine showed that this map is an isomorphism, and in fact both ${\sf Y}_n$ and ${\sf J}_n$ are rationally isomorphic with $\mathcal T_{n}=\mathcal T_n(2g)$. This is implied by the following theorem, proven in the pair of papers \cite{L1,L2}, and stated in the introduction of \cite{L2}. It uses a surjective clasper surgery map $\theta_n\colon {\cT}_n\to {\sf Y}_n$ introduced by Habiro in \cite{Hab}, and further elucidated in \cite{HabMass} (Remarks 6.6, 7.8).

\begin{thm}[Levine] \label{thm:Levine} 
 For $n>1$, there is a commutative diagram\\
\centerline{$\xymatrix{
{\cT}_{n}\ar@{->>}^{\theta_n}[r]\ar@/_1pc/[rrr]_{\eta_{n}}&{\sf Y}_n\ar[r]& {\sf J}_n\ar[r]^\cong &{\sf D}_n
}.$}  Moreover, for all $n> 1$, $\eta_n$, and hence all of the maps, are rational isomorphisms.
\end{thm}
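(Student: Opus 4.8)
The plan is to build the commutative diagram first and then reduce the rational-isomorphism claim to two known computations: the rational description of the lower central series quotients of the free group (equivalently, of the Lie algebra of derivations $\sD_n$), and Habiro's surjection $\theta_n$. First I would recall the definitions of the three maps. The map $\theta_n\colon\cT_n\sra\sY_n$ is Habiro's clasper-surgery realization map, already known to be surjective; its well-definedness (that it kills AS and IHX) is standard from the clasper calculus of \cite{Hab,GGP}, since an IHX move on tree claspers changes the surgered manifold within the $(n+1)$-equivalence class, and an AS move corresponds to reorienting a clasper. The map $\sY_n\to\sJ_n$ is induced by the inclusion of filtrations $\Y_n\subseteq\J_n$ observed by Levine, which is well defined on associated graded because an $(n+1)$-equivalence does not change the image in $A_0(F/F_{n+1})$ (a clasper with $\geq n+1$ internal vertices acts trivially modulo $F_{n+2}$, by the standard estimate on how clasper surgery affects $\pi_1$, cf.\ the longitude computation in the string-link case). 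The isomorphism $\sJ_n\xrightarrow{\cong}\sD_n$ is the Garoufalidis--Levine identification from \cite{GL}: the $n$-th Johnson homomorphism of a homology cylinder records the effect on $F/F_{n+1}$, landing in $\sD_n=\Ker(\sL_1\otimes\sL_{n+1}\to\sL_{n+2})$, and it is an isomorphism on the graded level.

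Next I would establish commutativity of the triangle, i.e.\ that $\eta_n$ (the composite $\cT_n\to\sD_n$) is the standard bracketing map sending a tree to its associated Lie element. The quickest route is to track a single tree clasper $C$ representing a generator $t\in\cT_n$: surgering along $C$ produces a homology cylinder whose $n$-th Johnson homomorphism is computed, in the Goussarov--Habiro theory, by reading the tree $t$ as an element of $\sL_1\otimes\sL_{n+1}$ (one leaf distinguished, the rest forming an iterated bracket); this is precisely the computation carried out in \cite{GGP,Hab} and recalled in \cite{HabMass}, Remarks~6.6 and~7.8. So $\eta_n(t)$ is the image of $t$ under the natural map $\cT_n\to\sD_n$, and the triangle commutes by construction.

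Finally, for the rational statement: $\eta_n\otimes\Q\colon\cT_n\otimes\Q\to\sD_n\otimes\Q$ is an isomorphism. This is the classical fact (Levine \cite{L1}, building on work going back to the Jacobi-diagram/Lie-algebra dictionary in finite-type theory) that the map from the space of order-$n$ trees modulo AS and IHX to $\sD_n$ is a rational isomorphism; it is proven by exhibiting an explicit inverse, or by a dimension count comparing $\dim_\Q(\cT_n\otimes\Q)$ with $\dim_\Q\sD_n\otimes\Q$ via the known rational ranks of both sides (the latter computed from Witt's formula for the free Lie algebra). Granting this, $\eta_n\otimes\Q$ is an isomorphism, hence so is $\theta_n\otimes\Q$ (it is surjective by Habiro and its composite with the injective rational maps $\sY_n\to\sJ_n\to\sD_n$ is an isomorphism, forcing injectivity rationally); and then $\sY_n\otimes\Q\to\sJ_n\otimes\Q$ is both injective (it is the last leg of the isomorphism $\eta_n\otimes\Q$ after the surjection $\theta_n\otimes\Q$) and surjective (since $\eta_n$ is), hence an isomorphism, and $\sJ_n\otimes\Q\cong\sD_n\otimes\Q$ is already known. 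The main obstacle is the rational isomorphism $\cT_n\otimes\Q\cong\sD_n\otimes\Q$: this is not formal, and one should cite Levine's original argument \cite{L1,L2} rather than reprove it — the heart of it is that the obvious surjection $\cT_n\sra\sD_n$ (after tensoring with $\Q$) has no kernel, which follows from matching ranks using Witt's formula, the only subtlety being the bookkeeping of the AS and IHX relations against the defining relation of $\sD_n$.
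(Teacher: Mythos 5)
The paper does not reprove this theorem; it explicitly attributes it to Levine and cites \cite{L1,L2} (with Habiro's $\theta_n$ from \cite{Hab,HabMass} and the identification $\sJ_n\cong\sD_n$ from \cite{GL}). Your proposal reconstructs exactly that decomposition and cites the same sources for each ingredient — Habiro/GGP for well-definedness and surjectivity of $\theta_n$, Garoufalidis--Levine for $\sJ_n\cong\sD_n$, the clasper-to-Johnson-homomorphism computation for commutativity, and Levine for the rational isomorphism $\eta_n\otimes\Q$ — so it matches the paper's treatment. One small wrinkle in your rational argument: you describe the maps $\sY_n\to\sJ_n\to\sD_n$ as "injective rational maps" before having established that, whereas the cleaner order of deduction is to note that $\eta_n\otimes\Q$ iso forces $\theta_n\otimes\Q$ injective (hence iso, being already onto), and then the remaining composite must be a rational isomorphism as well; the conclusion is the same but the logic you wrote presupposes part of it. Also be aware that the rational injectivity of $\eta_n$ in Levine is not merely a Witt-formula rank count against a formal surjection; it relies on nontrivial Lie-algebra input (and the original \cite{L1} argument had an error corrected in \cite{L2}), so the phrase "the only subtlety" understates the content of the cited step — but since you are deferring to Levine rather than reproving it, as the paper itself does, this is not a gap in your proposal.
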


Here, and throughout this section, the generators of $\cT_{n}$ have univalent vertices labeled by
elements of $H_1(\Sigma_{g,1})$, with multi-linearity relations for labels included along with the IHX and antisymmetry relations.  The realization map $\theta_n$ is defined by sending a tree  to surgery on $\Sigma_{g,1}\times I$ along a clasper with the corresponding tree-type; and the combinatorially defined map $\eta_n$ essentially sums over all choices of roots on order $n$ trees, using the correspondence between rooted binary trees and non-associative bracketings.  
Note that terminology and notation varies throughout the literature. Translation from the notation of Levine's more recent work \cite{L1,L2,L3} to ours is as follows: $\cA^t_n=\cT_n$, 
$\cF^w_n=\mathbb J_n$, $\cG^w_n=\sJ_n$, $\cF^Y_n=\mathbb Y_n$, $\cG^Y_n=\sY_n$.
By linearity, the labels on univalent vertices of the generators of $\cT_n$ can be assumed to be singletons from the set $\{x_i,y_i\}_{i=1}^g$; this corresponds to the geometric fact that clasper leaves can be ``split''
modulo higher-order equivalence.  

The maps in Theorem~\ref{thm:Levine} are not isomorphisms and analyzing the above diagram over $\Z$ is the main subject of this section. With the same goal in mind,
Levine was led to study \cite{L2,L3} the free \emph{quasi}-Lie algebra $\sL'$, in which the self-annihilation relation $[X,X]=0$  in $\sL$ is replaced by the weaker antisymmetry relation $[X,Y]=-[Y,X]$ in $\sL'$. In \cite{L2} Levine conjectures that a map $\eta'_n\colon\cT_n\to{\sD}'_n$ is an isomorphism, where $\sD_n'$ is the kernel of the quasi-Lie bracket map $\sL_1'\otimes\sL'_{n+1}\to\sL'_{n+2}$. This map is analogous to $\eta_n$ in Theorem~\ref{thm:Levine} above and we proved this ``Levine conjecture'' in \cite{CST3}. Our analysis below is an application of this result.

It is interesting to observe that with this new notion of a quasi-Lie algebra, Conjecture~\ref{conj:a-maps} from the introduction implies an exact sequence 
\[
0\to \mathbb Z_2\otimes {\sf L}'_{2n+2}\to{\sf Y}_{4n+1}\to {\sf J}_{4n+1}\to 0
\]
This uses the following split exact sequence from \cite{L3}, where the left map sends $X$ to $[X,X]$, and the right map is the natural projection:
$$
0\to\mathbb Z_2\otimes {\sf L}_{k}\to\sL'_{2k}\to \sL_{2k}\to 0
$$

Notice that Conjecture~\ref{conj:a-maps} is not quite correct when $n=0$. Habiro showed that there is a non-canonical isomorphism
\[
{\sf K}^{\sf Y}_1\cong (\z\otimes {\sf L}'_2)\oplus \z,
\]
 which has an extra $\z$-summand.

\subsection{Proof of Theorem~\ref{thm:improved-filtration}}\label{subsec:proof-thm-improved-filtration}

Recall from the statement of Theorem~~\ref{thm:improved-filtration} that ${\sf K}^{\sf Y}_{n}$ is the kernel of the map ${\sf Y}_n\to {\sf J}_n$ induced from the inclusion $\Y_n\subset \J_n$ on the associated graded groups. 
We will need a homomorphism $\kappa_{2n-1}\colon{\sf K}^{\sf Y}_{2n-1}\to\z\otimes{\sf L}_{n+1}$ defined as follows. Suppose $[M]\in{\sf K}^{\sf Y}_{2n-1}$ is an equivalence class of homology cylinders up to $\sim_{2n}.$ Then $M\in \mathbb J_{2n}$. So $M$ determines an element of ${\sf J}_{2n}\cong{\sf D}_{2n}$. Applying the right-hand map in the top sequence of Theorem~\ref{thm:filtration}
yields an element of $\z\otimes{\sf L}_{n+1}$.
This does not depend on the choice of representative, for if $M'\sim_{2n}M$, then $M'\cdot M^{-1}\in\mathbb Y_{2n}$ and $[M'\cdot M^{-1}]\mapsto 0\in \z\otimes {\sf L}_{n+1}$ by the exactness of the top sequence in Theorem~\ref{thm:filtration}. So $M$ and $M'$ both map to the same element of $\z\otimes{\sf L}_{n+1}$. 

The following result will be proven in section~\ref{subsec:proof-prop-ksurjective}.
\begin{prop}\label{prop:ksurjective}
The homomorphism $\kappa_{2n-1}\colon {\sf K}^{\sf Y}_{2n-1}\to \z\otimes \sL_{n+1}$ is surjective.
\end{prop}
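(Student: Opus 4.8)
\textbf{Proof plan for Proposition~\ref{prop:ksurjective}.}

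The plan is to exhibit, for each $\xi \in \z\otimes\sL_{n+1}$, an explicit homology cylinder $M_\xi\in\mathbb J_{2n}$ whose class lies in ${\sf K}^{\sf Y}_{2n-1}$ and satisfies $\kappa_{2n-1}([M_\xi])=\xi$. The natural source of such elements is Habiro's realization map $\theta_{2n-1}\colon\cT_{2n-1}\to{\sf Y}_{2n-1}$: I would take a tree $t\in\cT_{2n-1}$, perform the corresponding clasper surgery on $\Sigma_{g,1}\times I$, and ask when the resulting homology cylinder dies in ${\sf J}_{2n-1}$ but records a nonzero value in $\z\otimes\sL_{n+1}$ after being pushed into $\mathbb J_{2n}$. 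Concretely, since $\Y_{2n-1}\subset\J_{2n-1}$, an element of $\mathbb Y_{2n-1}$ represents an element of ${\sf K}^{\sf Y}_{2n-1}$ precisely when its image under ${\sf Y}_{2n-1}\to{\sf J}_{2n-1}\cong{\sf D}_{2n-1}$ vanishes, i.e.~when $\eta_{2n-1}(t)=0\in\sD_{2n-1}$; the relevant classes come from trees $t$ in the kernel of $\eta_{2n-1}$, which by the Levine Conjecture (proven in \cite{CST3}) is understood: it is precisely the $2$-torsion in $\cT_{2n-1}$, parametrized via the split sequence $0\to\z\otimes\sL_k\to\sL'_{2k}\to\sL_{2k}\to 0$ and the identification $\cT_n\cong\sD'_n$.

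The key steps, in order, would be: (1) recall from \cite{CST3} the identification of the $2$-torsion subgroup of $\cT_{2n-1}$, which via $\eta'_{2n-1}$ and the quasi-Lie machinery is a copy of $\z\otimes\sL_{n+1}$ sitting inside $\cT_{2n-1}$; call the corresponding trees $t_\lambda$ for $\lambda\in\z\otimes\sL_{n+1}$. (2) Show $\eta_{2n-1}(t_\lambda)=0$, so that $\theta_{2n-1}(t_\lambda)\in\mathbb Y_{2n-1}$ actually lies in $\mathbb J_{2n}$, hence defines a class in ${\sf K}^{\sf Y}_{2n-1}$. (3) Compute $\kappa_{2n-1}$ on this class: by definition $\kappa_{2n-1}$ reads off the image in $\z\otimes\sL_{n+1}$ under the right-hand map ${\sf J}_{2n}\cong{\sf D}_{2n}\to\z\otimes\sL_{n+1}$ of Theorem~\ref{thm:filtration}. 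The point is that pushing a clasper surgery of order $2n-1$ ``one order deeper'' into $\mathbb J_{2n}$ and applying the Johnson homomorphism at level $2n$ recovers exactly the bracketing data encoded by $\lambda$; this is the clasper-to-Lie-algebra dictionary already implicit in Theorem~\ref{thm:filtration} and its proof in \cite{CST3}. So $\kappa_{2n-1}([\theta_{2n-1}(t_\lambda)])=\lambda$ up to a unit, giving surjectivity. Alternatively, and perhaps more cleanly, one can run the argument purely diagrammatically: the top sequence of Theorem~\ref{thm:filtration} identifies ${\sf K}^{\sf Y}_{2n-1}$ with the image in ${\sf Y}_{2n-1}$ of $\Ker(\eta_{2n-1})\subset\cT_{2n-1}$, and $\kappa_{2n-1}$ corresponds under $\theta_{2n-1}$ to the composite $\Ker(\eta_{2n-1})\hookrightarrow\cT_{2n-1}$ followed by the identification of this kernel with $\z\otimes\sL_{n+1}$ — which is a tautological isomorphism by the Levine Conjecture, hence in particular surjective.

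I expect the main obstacle to be step (3): making precise the claim that $\kappa_{2n-1}\circ\theta_{2n-1}$, restricted to $\Ker\eta_{2n-1}$, is (up to sign/units) the identification of $\Ker\eta_{2n-1}$ with $\z\otimes\sL_{n+1}$ supplied by \cite{CST3}. This requires pinning down the relationship between the ``push into the next Johnson level'' used to define $\kappa_{2n-1}$ and the concrete description of the $2$-torsion of $\cT_{2n-1}$ via the quasi-Lie bracket — i.e.~checking that the two maps $\z\otimes\sL_{n+1}\to{\sf K}^{\sf Y}_{2n-1}\to\z\otimes\sL_{n+1}$ compose to an isomorphism rather than something degenerate. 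This is essentially a compatibility between the clasper calculus of \cite{Hab, HabMass} and the tree calculus of \cite{CST1,CST3}, and while all the ingredients are available, the bookkeeping (orientations, the factor-of-two issues intrinsic to $2$-torsion computations, and the precise form of the maps in Theorem~\ref{thm:filtration}) is where the real work lies. Everything else — the identification of $\Ker\eta_{2n-1}$, the fact that $\theta_{2n-1}$ is defined and surjective, the diagram chase showing ${\sf K}^{\sf Y}_{2n-1}$ is the image of $\Ker\eta_{2n-1}$ — is either quoted directly from the stated results or is a routine diagram chase.
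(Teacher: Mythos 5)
You have the right high-level strategy (realize elements of ${\sf K}^{\sf Y}_{2n-1}$ by clasper surgery and then compute $\kappa_{2n-1}$), but there is a genuine gap at exactly the point you flag as ``where the real work lies,'' and I do not think it can be closed as cheaply as your ``purely diagrammatic'' alternative suggests. First, the parenthetical identification of $\Ker(\eta_{2n-1})$ with $\z\otimes\sL_{n+1}$ is not what the split sequence $0\to\z\otimes\sL_k\to\sL'_{2k}\to\sL_{2k}\to 0$ gives: the kernel of $\cT_{2n-1}\to\sD_{2n-1}$ is larger than the target of $\kappa_{2n-1}$ (it contains the framing relations), and even after passing to $\widetilde{\cT}_{2n-1}$ the kernel is $\z\otimes\sL_{2j+1}$ when $2n-1=4j-1$ and $\z\otimes\sL'_{2j+2}$ (quasi-Lie, not Lie) when $2n-1=4j+1$. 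Matching these up with the target $\z\otimes\sL_{n+1}$ of $\kappa_{2n-1}$ --- which comes from the snake lemma applied one degree higher, at $2n$ --- is precisely what needs to be computed; it is not a tautology, and the degrees do not line up by formal nonsense.

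Second, and more fundamentally, evaluating $\kappa_{2n-1}\circ\theta_{2n-1}$ on a $2$-torsion tree $t_\lambda$ of order $2n-1$ requires computing the Johnson homomorphism of the surgered homology cylinder \emph{one order deeper than the clasper's order}: the leading term (in $\sD_{2n-1}$) vanishes by assumption, and you must extract the subleading term in $\sD_{2n}$. This is not recorded by the tree type alone at leading order; the paper handles it by switching to \emph{twisted claspers} and $\iinfty$-trees $J^\iinfty$ (Definitions~\ref{def:twisted-clasper}--\ref{def:twisted-tree}), for which an order~$n$ twisted clasper already lands in $\J_{2n}$ directly, and the longitude-equals-meridian property of the $+1$-framed unknotted leaf feeds a length-$(n+1)$ commutator into a length-$(n+1)$ commutator, producing $\psi(x)\in F_{2n+1}/F_{2n+2}$ cleanly. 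The explicit formula $\gamma=c\in G_{2n-1}/G_{2n}$ (Lemma~\ref{lem:twistedcommutator}) and the preliminary Lemma~\ref{lem:commutator} are what let the paper identify the resulting element of $\sD_{2n}$ as $\tfrac12\eta_{2n}(J\text{--}J)$, after which tracing the snake lemma shows $s\ell_{2n}\bigl(\tfrac12\eta_{2n}(J\text{--}J)\bigr)=J$. None of this appears in the proposal, and the whole device of twisted claspers (Proposition~\ref{prop:twistedtheta} and the supporting clasper lemmas) is the missing ingredient: without it, your ``clasper-to-Lie-algebra dictionary'' is not yet a computation.

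In short: your proof plan names the correct target and realizes that the work is in step (3), but it neither supplies the twisted-clasper formalism nor the commutator/snake-lemma calculation that establishes $\kappa_{2n-1}(\widetilde\theta_{2n-1}(J^\iinfty))=J$; and the ``tautological isomorphism'' shortcut is not available because of the degree shift and the quasi-Lie subtlety in the odd cases.
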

\begin{rem}\label{rem:higher-order-invariants}
Regardless of whether or not Conjecture~\ref{conj:a-maps} is true, $a_{n+1}$ induces an isomorphism $\overline{a}_{n+1}$ on $(\mathbb Z_2\otimes {\sf L}_{n+1})/\Ker a_{n+1}$, and in direct analogy with the higher-order Arf invariants defined for links bounding certain Whitney towers
\cite{CST0}, one can define higher-order Arf invariants for homology cylinders representing elements of ${\sf K}^{\sf Y}_{4n+1}$ by inverting $\overline{a}_{n+1}$. This analogy with \cite{CST0} can be further extended by defining higher-order Sato-Levine invariants for homology cylinders using the homomorphism $\kappa_{2n-1}$.\end{rem}

{\bf Proof of Theorem~\ref{thm:improved-filtration}  (order $4n-1$ case)}\\ We claim there is a commutative diagram of short exact sequences:
$$
\xymatrix{
\z\otimes{\sf L}_{2n+1}\ar@{->>}[d]\ar@{>->}[r]&\widetilde{\cT}_{4n-1}\ar@{->>}[d]^{\widetilde{\theta}_{4n-1}}\ar@{->>}[r]^{\tilde\eta}&{\sf D}_{4n-1}\ar[d]^{\cong}\\
{\sf K}^{\sf Y}_{4n-1}\ar@{>->}[r]&{\sf Y}_{4n-1}\ar@{->>}[r]&{\sf J}_{4n-1}\\
}
$$

We already know from Theorem~\ref{thm:Levine} that the right-hand square commutes when $\widetilde{\theta}_{4n-1}$ and $\widetilde{\mathcal T}_{4n-1}$ are replaced by $\theta_{4n-1}$ and  
$\cT_{4n-1}$, which implies the right-hand square commutes in the present context. The exactness of the top sequence follows  by combining Theorems 5.1 and 6.5(ii) of \cite{CST1}, using that ${\sf L}_{2n+1}={\sf L}'_{2n+1}$. The surjection $\widetilde{\theta}_{4n-1}$ restricts to a surjection $\z\otimes{\sf L}_{2n+1}\twoheadrightarrow {\sf K}^{\sf Y}_{4n-1}$. By Proposition~\ref{prop:ksurjective}, there is also a surjection in the opposite direction. Since these are $\z$-vector spaces, this is sufficient to conclude  that $\z\otimes {\sf L}_{2n+1}\to{\sf K}^{\sf Y}_{4n-1}$ is an isomorphism. By the $5$-lemma, we can also conclude that the middle map is an isomorphism. This completes the analysis of the order $4n-1$ case.

{\bf Proof of Theorem~\ref{thm:improved-filtration}  (order $4n+1$ case)}\\ 
We again have a commutative diagram of exact sequences
$$
\xymatrix{
\z\otimes{\sf L}'_{2n+2}\ar@{->>}[d]\ar@{>->}[r]&\widetilde{\cT}_{4n+1}\ar@{->>}[d]^{\widetilde{\theta}_{4n+1}}\ar@{->>}[r]^{\tilde\eta}&{\sf D}_{4n+1}\ar[d]^{\cong}\\
{\sf K}^{\sf Y}_{4n+1}\ar@{>->}[r]&{\sf Y}_{4n+1}\ar@{->>}[r]&{\sf J}_{4n+1}\\
}
$$
The $4n+1$ case would follow by the same argument as for $4n-1$, except that $\kappa_{4n+1}$ maps to ${\sf L}_{2n+2}\neq {\sf L}'_{2n+2}$, so we can no longer conclude that the left-hand map is an isomorphism. However, from the following commutative diagram it is clear that $\z\otimes{\sf L}_{n+1}$ surjects onto $\Ker(\kappa_{4n+1})$:
$$\xymatrix{
\z\otimes{\sf L}_{n+1}\ar@{>->}[r]&\z\otimes{\sf L}'_{2n+2}\ar@{->>}[d]\ar@{->>}[r]&\z\otimes{\sf L}_{2n+2}\\
&{\sf K}^{\sf Y}_{4n+1}\ar@{->>}[ur]_{\kappa_{4n+1}}&
}$$
The fact that this diagram commutes will be explained in Remark~\ref{rem:commutative-diagram} during the proof of Proposition~\ref{prop:ksurjective} in subsection~\ref{subsec:proof-prop-ksurjective} below. 
$\square$

\subsection{Clasper conventions and notations}\label{subsec:clasper-conventions}
For details on claspers see e.g.~\cite{CT1}. Given a tree clasper with $n$ internal nodes embedded in a homology cylinder, there is an associated element of $\cT_n$ defined by labeling the univalent vertices of the underlying vertex-oriented tree by the homology classes
determined by the clasper leaves, which are canonically identified with elements of $H_1(\Sigma_{g,1};\mathbb Z)$. The \emph{order} of a tree clasper is the order of its associated tree.
All claspers will be assumed to be of order at least $1$, unless otherwise specified. Surgery on a clasper $C\subset M$ is denoted by $M_C$, and for $C'\subset M\setminus C$, we denote by $C'_C$ the image of $C'$ in $M_C$. 

Rooted trees are usually denoted by capital letters, and unrooted trees are usually denoted by $t$. Lower case letters labeling univalent vertices represent homology elements and capital letters represent sub-trees.
Choosing a preferred leaf of a clasper determines an associated rooted tree.

We will need the \emph{zip construction} as formulated in section 4.2 of \cite{CT1}: Given a clasper $C$, one can cut one of its leaves along a framed arc into two framed embedded loops, yielding two daughter claspers $C_1$ and $C_2$, where $C_1$ is exactly $C$ with one loop of the leaf, and $C_2$ has leaves parallel to those of $C_1$, except where it uses the other loop of the leaf, and its edges and vertices lie in a regular neighborhood of $C_1$ avoiding any caps it might have (a \emph{cap} is a framed embedded disk bounded by a leaf). 


\subsection{Proof of Proposition~\ref{prop:twistedtheta}}\label{subsec:twistedtheta}
Habiro's surjective realization map $\theta_n\colon\cT_n\twoheadrightarrow {\sf Y}_n$ is defined on a tree  $t$ by choosing a clasper $C$ in the identity homology cylinder $\Sigma_{g,1}\times I$ whose associated tree is $t$. One shows this is well-defined by proving that any two such embeddings of a clasper $C$ are related by $(>n)$-equivalences and homology bordisms, and that the IHX and AS relations are realized geometrically (this argument will be presented and extended in this subsection).

To prove Proposition~\ref{prop:twistedtheta} (stating that $\theta_{2n-1}$ descends to $\widetilde{\theta}_{2n-1}\colon\widetilde{\cT}_{2n-1}\twoheadrightarrow {\sf Y}_{2n-1}$) we introduce the notion of a \emph{twisted clasper} (Definition~\ref{def:twisted-clasper} below), which is required to have one leaf that is a $+1$-framed unknot, and show that it actually represents a $(2n-1)$-equivalence up to homology bordism, when it has $n$ internal nodes, i.e.~when its associated ``twisted'' $\iinfty$-tree is of order $n$ (Definition~\ref{def:twisted-tree}). We will extend Habiro's result to show that the homology cylinders resulting from surgery on any two twisted claspers corresponding to the same $\iinfty$-tree are ${2n}$-equivalent up to homology bordism.

\begin{lem}\label{lem:bordism}
Suppose $M$ is a $3$--manifold, possibly with boundary, and $C$ is a tree clasper embedded in $M$ such that a leaf $\ell$ of $C$ bounds an embedded disk, consistent with the framing of $\ell$, which possibly intersects the rest of $C$. Then $M$ is homology bordant to $M_C$. 
\end{lem}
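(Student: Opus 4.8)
The plan is to show that surgery on a tree clasper $C$ one of whose leaves bounds an embedded framed disk $D$ (possibly meeting the rest of $C$) produces a $3$--manifold $M_C$ that is homology bordant to $M$. The idea is to realize the surgery trace as a $4$--dimensional bordism and then compute its homology.

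First I would recall that clasper surgery can be described as surgery on an associated framed link $L_C \subset M$ obtained from $C$ by Habiro's standard recipe (replacing each node by a Borromean-type pattern, each edge by a Hopf-linked clasp, and each leaf by a parallel push-off), so that $M_C$ is the result of Dehn surgery on $L_C$. The trace of this surgery is a compact oriented $4$--manifold $W$ with $\partial W = M \sqcup (-M_C)$, obtained from $M \times [0,1]$ by attaching $2$--handles along the components of $L_C \subset M \times \{1\}$. So $H_*(W, M)$ is computed by a chain complex that is free on the $2$--handles in degree $2$, and the relative homology — hence the question of whether both inclusions $M \hookrightarrow W$ and $M_C \hookrightarrow W$ are homology isomorphisms — is governed by the linking matrix of $L_C$ together with the framings.

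The key point, and where the hypothesis on the leaf $\ell$ is used, is the following: because $\ell$ bounds an embedded disk $D$ consistent with its framing, the corresponding component(s) of $L_C$ associated to that leaf are $0$--framed unknots in $M$ that are moreover \emph{null-homotopic} (indeed bound disks, possibly punctured by other components of $L_C$). One then argues by an innermost-leaf / induction-on-order argument: surgery on a $0$--framed unknot bounding a disk $D$ that is punctured only by other surgery curves can be traded, via handle slides guided by $D$, for a sequence of cancellations that does not change the homology type. Concretely, sliding the curves that puncture $D$ over the meridian of $\ell$'s curve lets us cancel the $\ell$-handle against a dual $3$--handle (the belt sphere of the $0$--framed unknot), and the net effect on the linking matrix is a unimodular congruence. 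Thus the intersection form of $W$ rel boundary is unimodular / metabolic in the appropriate sense, and a Mayer--Vietoris / long-exact-sequence computation shows $H_*(M;\Z) \to H_*(W;\Z)$ and $H_*(M_C;\Z) \to H_*(W;\Z)$ are both isomorphisms. This gives exactly the homology bordism $W$ between $M$ and $M_C$.

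The main obstacle I anticipate is bookkeeping the linking/framing data carefully enough to see that the presence of a framed bounding disk for $\ell$ really does force the relevant block of the linking matrix to be unimodular even when $D$ is punctured by many other components of $L_C$: one must check that the handle slides used to simplify the picture never disturb the $0$--framing of $\ell$'s curve and never introduce new homology. A clean way around the bookkeeping is to instead observe directly that the curve associated to $\ell$ bounds an embedded surface (a subsurface of $D$ with the puncture-disks removed, capped appropriately) in the complement, deduce that it is $0$ in $H_1$ of the complement, and then invoke the standard fact that surgery on a $0$--framed knot which is null-homologous in its complement — equivalently, on which the surgery introduces no new homology and kills no old homology — extends to a homology bordism; combined with the zip construction (Section~\ref{subsec:clasper-conventions}) to reduce the general punctured case to the embedded case by induction on the number of punctures of $D$. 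I expect the zip construction to be the technical engine that makes the induction go through.
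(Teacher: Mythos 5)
Your general framework is right -- clasper surgery is surgery on an associated link $L$, the trace is a $4$--manifold, and one must argue that the linking/framing data force both inclusions $M\hookrightarrow W$ and $M_C \hookrightarrow W$ to be homology isomorphisms -- but the specific mechanism you propose does not close the argument, and the crucial trick in the paper's proof is absent.

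The paper (following Levine's \cite{L1}, Lemmas~2.5 and~2.6) finishes with one clean observation: orient the edges of the tree clasper so that $\ell$ is a source, every other leaf is a sink, and no trivalent vertex is a source. Splitting the components of $L$ into $L'$ (heads of arrows) and $L''$ (tails of arrows) then makes the block linking matrix $\lambda(L'_i, L''_j)$ visibly unimodular -- it is the identity plus a single extra row coming from $D$ passing through other components -- and Levine's lemma immediately gives the homology bordism. This tree--orientation decomposition is what does all the work, and your proposal never uses the tree structure of the clasper at all; you treat $L$ as a generic surgery link and hope to manufacture a unimodular block by ad hoc handle slides guided by $D$. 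That is exactly the ``bookkeeping'' you flag as worrisome, and as written it is not an argument.

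Your fallback route also has a genuine error. You claim that surgery on a $0$--framed knot that is null-homologous in $M$ ``extends to a homology bordism'' and ``introduces no new homology and kills no old homology.'' This is false: $0$--framed surgery on a null-homologous knot adds a $\Z$ to $H_1$ (e.g.\ $0$--surgery on the unknot in $S^3$ gives $S^1\times S^2$). The reason the \emph{whole} clasper surgery $L$ is a homology bordism is precisely that $\ell$'s handle is paired off, via the Hopf-link structure dictated by the clasper tree, against the rest of $L$ -- which is what the $L'\cup L''$ decomposition encodes, and what your isolated statement about $\ell$ alone cannot see. Finally, the zip construction is not the right tool for reducing to the case $D\cap C=\emptyset$: zip splits a leaf of the clasper into two daughter claspers, it does not remove punctures of $D$, and even where it applies it changes $C$ rather than merely simplifying the surgery presentation of a fixed $M_C$. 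The missing idea is the tree orientation; once you have that, the unimodularity is immediate and no induction is needed.
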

\begin{proof}
 This is similar to Theorem 2 of \cite{L1} and the the proof given there works with very little modification. Surgery on the clasper $C$ is defined as surgery on a corresponding link $L$, and by lemmas 2.5 and 2.6 of \cite{L1}, to show that surgery on $C$ is a homology bordism, it suffices to decompose $L$ as a union $L'\cup L''$ where $L'$ is a trivial $0$-framed link, and the matrix of linking numbers $\lambda(L'_i,L''_j)$ between the components of $L'$ and $L''$ is non-singular. Orient the edges of the clasper so that every leaf but $\ell$ is at the end of the arrow, $\ell$ is the source of an arrow, and no trivalent vertex is a source (as in Lemma~2.6 of \cite{L1}). Let $L''$ be the link components at the end of the arrows, and let $L'$ be the link components at the beginning of arrows. Then the linking matrix between $L'$ and $L''$ is the identity matrix with one row of possibly non-zero off-diagonal elements, so it is invertible.
\end{proof}
It follows from Lemma~\ref{lem:bordism} that the relation of $n$-equivalence on $\HC(g,1)$ is generated by clasper surgery on \emph{tree} claspers of order (at least) $n$, as observed in \cite{L1}. The relationship between the choice of embedding of a clasper representing a tree-type and the resulting surgery is described by the following lemmas.

\begin{lem}\label{lem:crossing}
Let $C_1\cup C_2\subset M$ be an embedding of two tree claspers, of orders $n_1$ and $n_2$ respectively, in a homology cylinder $M$. Suppose $C_1'\cup C_2'\subset M$ is an embedding that differs from $C_1\cup C_2\subset M$ by:
\begin{enumerate}
\item Crossing a leaf of $C_1$ through a leaf of $C_2$. Then $M_{C_1\cup C_2}\sim_{n_1+n_2}M_{C_1'\cup C_2'}$.
\item Crossing a leaf of $C_1$ through an edge of $C_2$. Then $M_{C_1\cup C_2}\sim_{n_1+n_2+1}M_{C_1'\cup C_2'}$.
\item Crossing an edge of $C_1$ through an edge of $C_2$. Then $M_{C_1\cup C_2}\sim_{n_1+n_2+2}M_{C_1'\cup C_2'}$.
\end{enumerate}
\end{lem}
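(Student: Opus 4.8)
\textbf{Proof strategy for Lemma~\ref{lem:crossing}.} The plan is to reduce all three statements to a single model computation using the zip construction recalled in Section~\ref{subsec:clasper-conventions}, together with the fact (Lemma~\ref{lem:bordism} and its corollary) that a tree clasper with a null-bounding leaf contributes nothing up to homology bordism. The unifying principle is that changing an embedding by a homotopy that passes one piece of $C_1$ through one piece of $C_2$ can be undone at the cost of introducing an extra clasper whose tree-type is obtained by grafting a new branch onto (a leaf of) $C_1$ along $C_2$, and whose order is therefore at least $n_1+n_2$, $n_1+n_2+1$, or $n_1+n_2+2$ according to which features are crossed.

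First I would treat case (1), the leaf-through-leaf crossing. Isotope so that the two leaves in question form a Hopf-linked pair of loops inside a small ball, versus the unlinked configuration. The standard move here is: a leaf $\ell_1$ of $C_1$ linking a leaf $\ell_2$ of $C_2$ can be unlinked at the expense of cutting $\ell_2$ along a framed arc via the zip construction, producing a daughter clasper $C_2'$ whose leaves are parallel copies of those of $C_2$ but with one leaf now running over a small copy that is linked with $\ell_1$; absorbing that copy into $C_1$ produces a single clasper $C$ whose underlying tree is the union of the trees of $C_1$ and $C_2$ joined at an edge where $\ell_1$ and the relevant leaf of $C_2$ were. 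This ``joined'' tree has $n_1+n_2$ internal nodes (the two leaves being glued are consumed, and one new trivalent vertex appears on each side — counting carefully, order is $n_1+n_2$), hence surgery on it is an $(n_1+n_2)$-equivalence. So $M_{C_1\cup C_2}$ and $M_{C_1'\cup C_2'}$ differ by an $(n_1+n_2)$-equivalence. For case (2), an edge of $C_2$ behaves, for the purposes of the zip construction, like a leaf labelled by a trivial homology class but carrying one extra trivalent vertex's worth of tree below it; pushing a leaf of $C_1$ across it produces a correction clasper whose tree is the graft of the $C_1$-tree onto $C_2$ at that edge, which has one more node than in case (1), giving order $n_1+n_2+1$. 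Case (3) is the same with the extra node now appearing on the $C_1$ side as well, giving $n_1+n_2+2$. In all three cases one also invokes Lemma~\ref{lem:bordism} to discard, up to homology bordism, any daughter claspers produced by the zip construction that acquire a null-bounding leaf, and one uses the already-established well-definedness of $\theta$ (i.e.\ that re-embeddings of a fixed clasper by isotopy and the local AS/IHX moves do not change the surgered manifold modulo higher-order equivalence) to handle the remaining lower-priority choices.

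The cleanest way to organize the write-up is probably to do the crossing change in a ball: the difference between the two embeddings is supported in a $3$--ball $B\subset M$ meeting $C_1\cup C_2$ in two standard strands (two sub-leaves, a sub-leaf and a sub-edge, or two sub-edges), and one replaces the ``crossed'' tangle by the ``uncrossed'' one plus a clasper supported near $B$. The identity $[\text{crossed}] = [\text{uncrossed}]\cdot[\text{commutator clasper}]$ is then a purely local statement, proven once by the zip construction, and the order of the commutator clasper is read off from its tree-type. I expect the main obstacle to be bookkeeping the order correctly — specifically, being careful that grafting the tree of $C_1$ onto a leaf versus an edge versus an interior edge of $C_2$ really does shift the node count by exactly $0$, $1$, $2$, and that the zip construction's daughter claspers either have the claimed order or are killable by Lemma~\ref{lem:bordism}. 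A secondary subtlety is that ``crossing an edge through an edge'' is genuinely symmetric, so one must make sure the $+2$ accounts for new trivalent vertices appearing on \emph{both} trees rather than double-counting; drawing the local picture of the grafted tree resolves this. None of this requires new ideas beyond \cite{CT1}; it is an adaptation of the standard clasper calculus, and I would state it as such and refer the reader there for the zip-construction details.
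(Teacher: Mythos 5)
Your overall strategy is the same as the paper's, which simply cites ``standard clasper calculus... the zip construction and Figure~31 of~\cite{Hab}''; you have correctly identified the tools (local model, zip construction, grafting trees, discarding daughters via Lemma~\ref{lem:bordism}) and your order counts ($+0,+1,+2$ for leaf-leaf, leaf-edge, edge-edge) agree with the statement. However, there is one place where, if read literally, your argument breaks: in case (1) you say that after zipping $\ell_2$ you ``absorb'' the daughter $C_2'$ (whose distinguished leaf is a meridian to $\ell_1$) into $C_1$ via the Hopf-link-into-edge move to get a single clasper $C$ of order $n_1+n_2$. That move consumes $C_1$, producing the identity $M_{C_1\cup C_2}=M_{\bar C_2\cup C}$, whereas the lemma requires comparison with $M_{C_1\cup\bar C_2}$; the clasper $C_1$ has to survive as part of the uncrossed configuration, not be replaced by $C$. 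The fix is standard but should be stated: perform a second zip on $\ell_1$ to split off a daughter $C_1''$ (a parallel copy of $C_1$ supported in a regular neighborhood, whose distinguished leaf Hopf-links the distinguished leaf of $C_2'$), and then combine $C_1''$ with $C_2'$ into the correction clasper of order $n_1+n_2$, leaving $\bar C_1\cup\bar C_2$ intact as the uncrossed embedding. Equivalently, one can phrase this as sliding the meridianal leaf of $C_2'$ over $C_1$ so that $C_2'$ picks up a grafted copy of $C_1$'s tree, rather than merging with $C_1$ itself. Either way, the key point you should make explicit is that the correction clasper involves a \emph{parallel copy} of $C_1$'s tree, so that $C_1$ remains available for the comparison manifold. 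With that clarification, cases (2) and (3) go through by the same double-zip reasoning applied at the crossed sub-edges, and your trivalent-vertex accounting for the $+1$ and $+2$ is right.
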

\begin{proof}
This is standard clasper calculus. It follows by the zip construction and Figure 31 of \cite{Hab}.
\end{proof}

\begin{lem}\label{lem:claspercrossing}
Suppose $C_1$ and $C_2$ are order $n$ tree claspers embedded in a homology cylinder $M$, such that
 $C_2$ is formed from $C_1$ by
 a finger move that pushes an arc of a leaf of $C_1$ across a leaf of $C_1$.
   Then $M_{C_1}\sim_{2n}M_{C_2}$ up to homology bordism.
\end{lem}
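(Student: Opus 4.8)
The plan is to express the "before" and "after" configurations as surgeries differing by a controlled amount, and to play the two crossing changes involved in a finger move off against each other. A finger move that pushes an arc of a leaf of $C_1$ across another leaf of $C_1$ can be realized by a sequence of two crossing changes of that leaf-arc with the second leaf: one crossing change to push the finger through, and (after the arc has passed) an inverse crossing change coming back. Each individual crossing change of a leaf through a leaf of an order $n$ clasper changes the surgered manifold only up to $\sim_{2n}$-equivalence, by part (1) of Lemma~\ref{lem:crossing} applied with $n_1=n_2=n$ (thinking of the pushed sub-arc as momentarily belonging to a separate daughter clasper via the zip construction). So the naive estimate already gives $M_{C_1}\sim_{2n}M_{C_2}$ \emph{up to homology bordism} — indeed Lemma~\ref{lem:bordism} is what lets us ignore the extra small-order link components produced along the way, since any leaf we create bounding a disk can be surgered at the cost of only a homology bordism.

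More carefully, the steps I would carry out are: first, isotope $C_1$ so that the finger move is supported in a small ball $B^3$ meeting $C_1$ in the relevant leaf-arc together with a sub-disk of the leaf it crosses; second, apply the zip construction inside (a neighborhood of) $B^3$ to split off a daughter clasper $C_1^{(2)}$ carrying the pushed arc, so that $C_1 = C_1^{(1)}\cup C_1^{(2)}$ with $C_1^{(1)}$ of order $n$ and $C_1^{(2)}$ of order $n$ (its leaves are parallel copies of leaves of $C_1^{(1)}$, so its associated tree is again order $n$); third, observe that the finger move becomes exactly a "cross a leaf of $C_1^{(2)}$ through a leaf of $C_1^{(1)}$" move, to which Lemma~\ref{lem:crossing}(1) applies with $n_1 + n_2 = 2n$; fourth, undo the zip to recover $C_2$, using Lemma~\ref{lem:bordism} to absorb any spurious unknotted leaves (which bound obvious disks) into homology bordisms.

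The main obstacle, and the reason the statement is only "up to homology bordism," is bookkeeping in the zip construction: when one splits the leaf of $C_1$, the daughter clasper $C_1^{(2)}$ runs parallel to $C_1^{(1)}$ and may pick up extra leaves or pass through caps, and one must check that the \emph{order} of $C_1^{(2)}$ is genuinely $n$ and not smaller, so that the $\sim_{2n}$ bound is not weakened. This is where the precise formulation of the zip construction from section~4.2 of \cite{CT1} — specifically that the daughter clasper's edges and vertices lie in a regular neighborhood of the parent avoiding its caps — does the work. The other point requiring care is that the two crossing changes comprising the finger move are genuinely inverse to each other in the clasper calculus so that, modulo $\sim_{2n}$, they do not compound; this follows because $\sim_{2n}$ is an equivalence relation and Lemma~\ref{lem:crossing}(1) is symmetric in $C_1, C_2$, so a crossing change and its reverse each cost only a single application.
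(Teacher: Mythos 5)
Your high‑level plan (zip $C_1$ into two daughters so that a self‑crossing becomes a crossing between two separate claspers, then invoke Lemma~\ref{lem:crossing}(1) and Lemma~\ref{lem:bordism}) is in the right neighborhood, and it does use the right ingredients, but there are real gaps that would need to be closed.

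First, the statement that the finger move ``can be realized by a sequence of two crossing changes'' is not right. The move in question is realized by surgery on a single clasp $E$ whose two leaves are meridians to the two leaves of $C_1$ involved; the paper's proof starts from this observation ($C_2 = (C_1)_E$), which is the key structural input you are missing. Introducing $E$ lets one track precisely how the change to $C_1$ interacts with the rest of the clasper, rather than treating it as an abstract pair of crossings.

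Second, the zip construction is not an operation you can perform ``inside a small ball $B^3$'' carrying the finger; it cuts a designated leaf of a clasper along a framed arc into two loops, and the daughter $C_1'$ is forced to have leaves and edges running parallel to all of $C_1^{(1)}$ inside a regular neighborhood. So after your zip, the piece $C_1^{(2)}$ is not just ``the pushed arc plus a clean order-$n$ tree'': its other leaves and its edges pass near (and through neighborhoods of) the leaf $b$ you want to cross, and these interactions have to be accounted for. The paper handles exactly this by first cutting a leaf so that one loop is a meridian to a leaf of $E$, surgering $E$, and then carefully pulling the resulting leaf of $(C_1')_E$ across a leaf of $C_1$ (that single controlled crossing is where Lemma~\ref{lem:crossing} enters), while Lemma~\ref{lem:bordism} removes the residual clasper.

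Third, ``undo the zip to recover $C_2$'' is not a move available to you. The zip lemma is an equality of surgered manifolds $M_C \cong M_{C^{(1)}\cup C^{(2)}}$, not an invertible operation. What you would actually need to argue is that the configuration you arrive at after the crossing is the zip decomposition of $C_2$ — and that is precisely the bookkeeping your sketch leaves unverified. Because these three points are exactly where the content of the lemma lives (the naive estimate ``two leaf-leaf crossings are each $\sim_{2n}$'' does not apply, since Lemma~\ref{lem:crossing} concerns crossings between two \emph{disjoint} claspers, not a self-crossing), the proposal as written has a genuine gap rather than a cosmetic one.
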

\begin{center}
\includegraphics[width=1.5in]{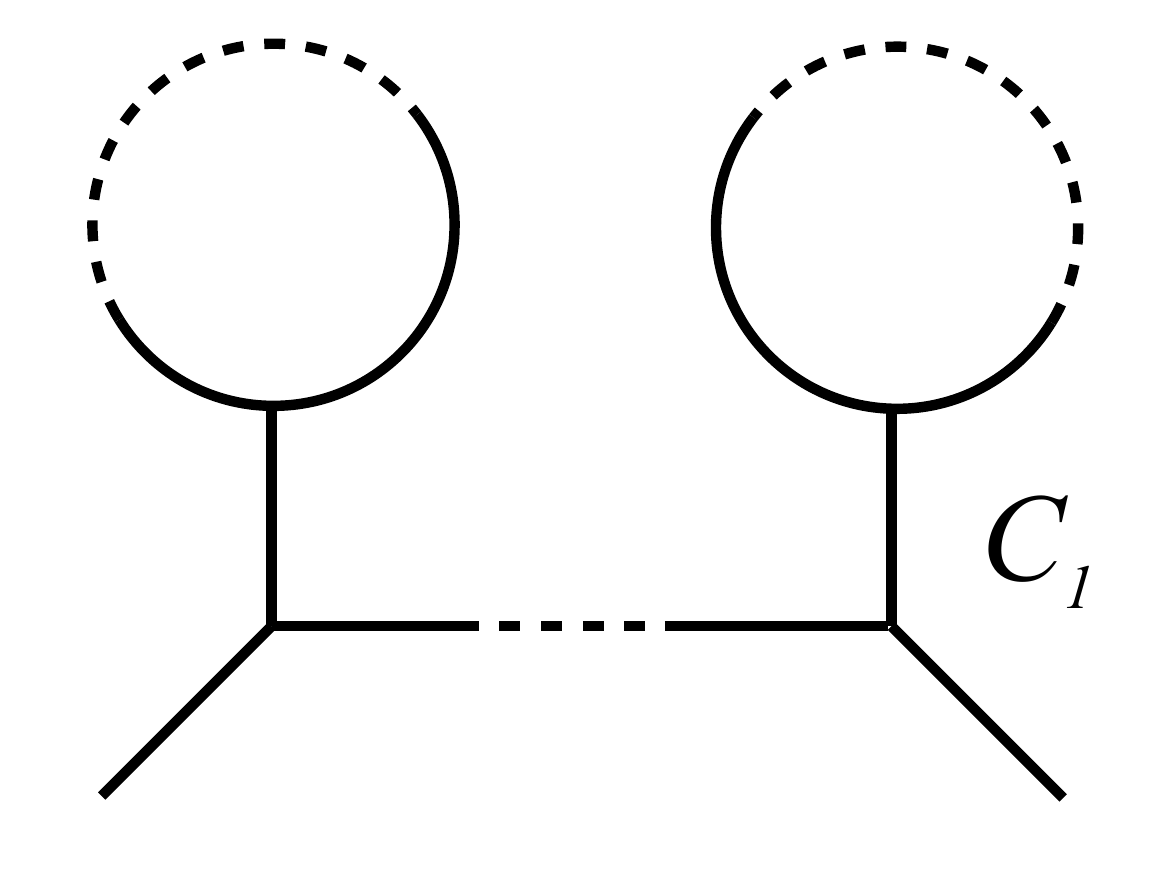}\hspace{2em}\includegraphics[width=1.5in]{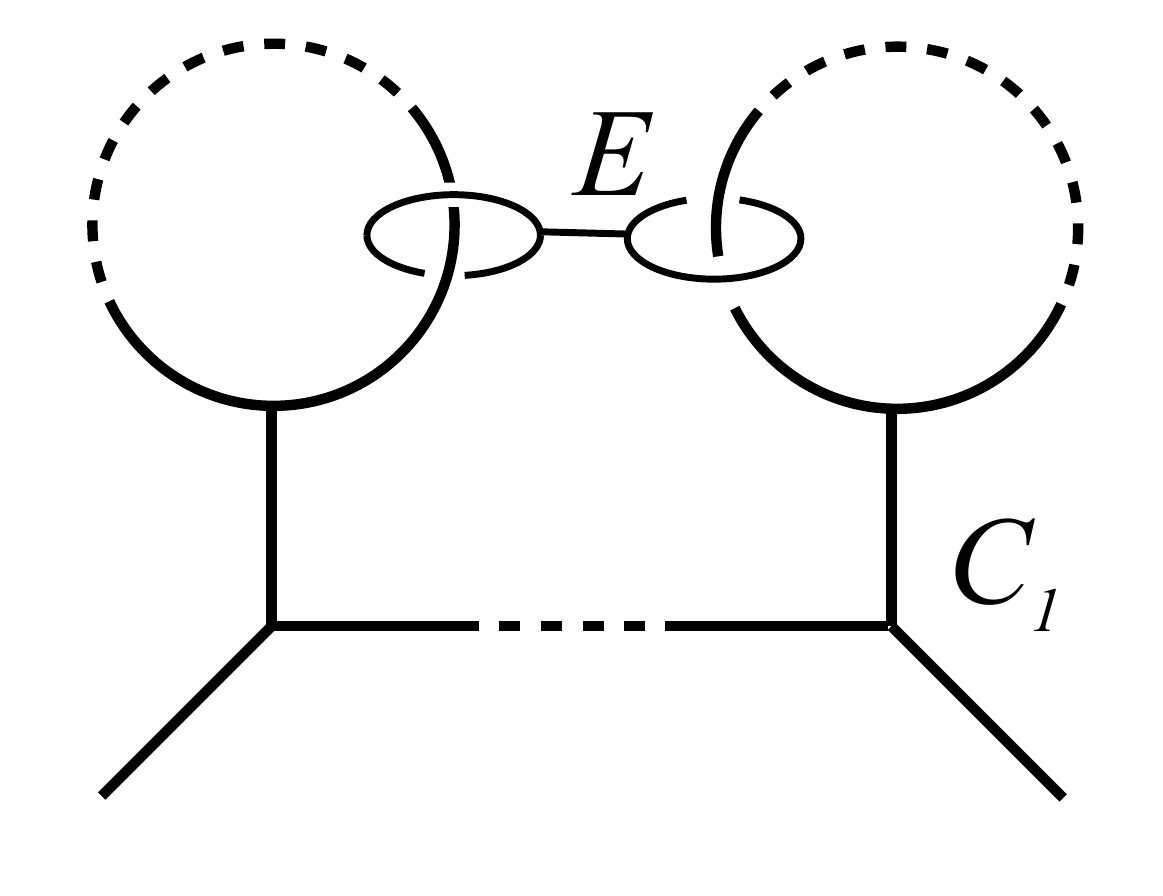}
\end{center}
\begin{proof}
Assume that the crossing is between two different leaves. (The case when it is a self-crossing of a leaf is similar.) As pictured above,
the finger move can be realized by surgery on a clasper $E$, with one edge and two leaves, where each leaf of $E$ is a meridian to one of the two leaves of $C_1$ involved in the finger move. In other words, $C_2=(C_1)_E$.
Now use the zip construction of \cite{CT1} to cut one of the leaves of $C_1$ so that one piece forms a meridian to the $E$ leaf. This gives us two claspers: the original $C_1$ together with $C_1'$, where $C_1'$ has edges that wind through a regular neighborhood $\nu(C_1)$ of $C_1$ in a complicated way, as in the following picture: 
\begin{center}
\includegraphics[width=1.9in]{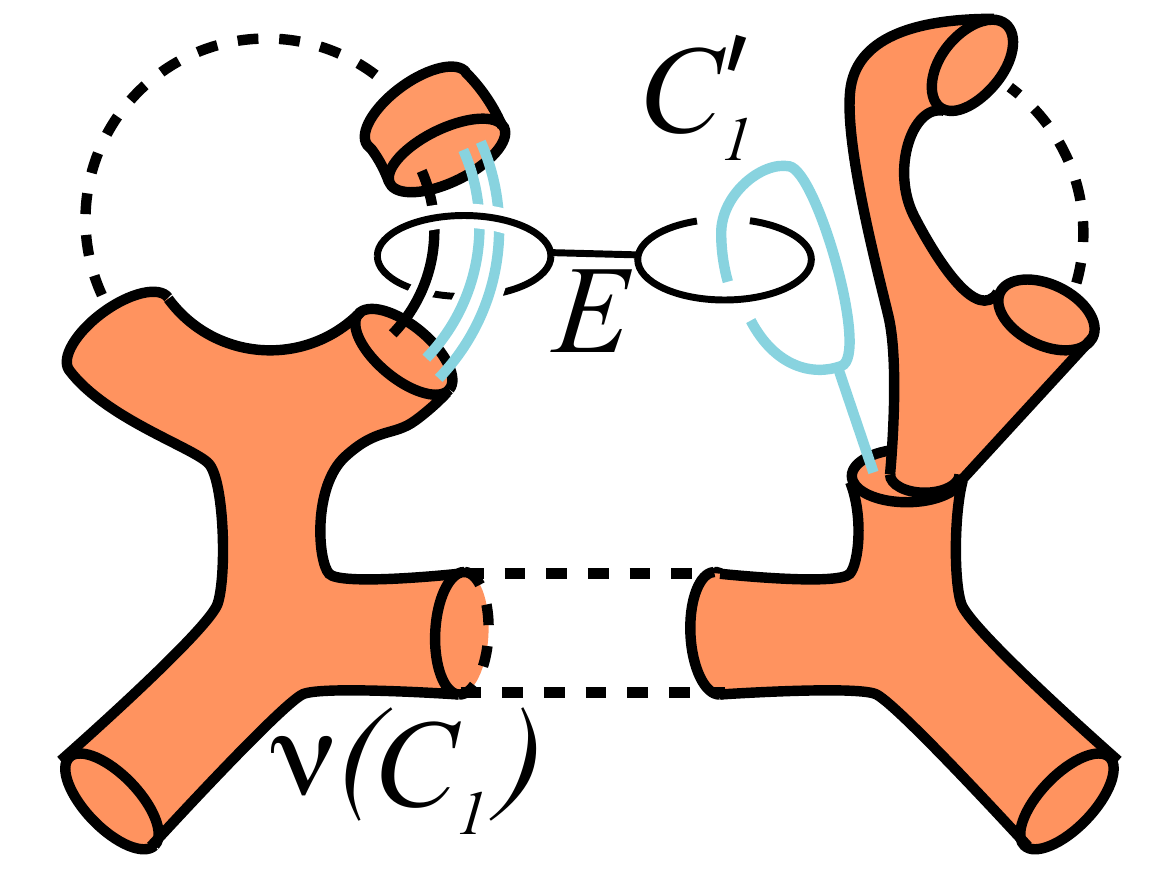}\end{center}
In the preceding picture, $C_1'$ is depicted in blue and the regular neighborhood of $C_1$ is depicted in orange. 
 Near the left-hand $E$-leaf, we have one (black) strand of a leaf of $C_1$, and several parallel (blue) strands which represent a leaf and possibly many edges of $C_1'$.
 
On the left-hand side of the following picture we depict what happens after surgering along the $E$-clasper:
$$
\begin{minipage}{2.0in}
\includegraphics[width=2.0in]{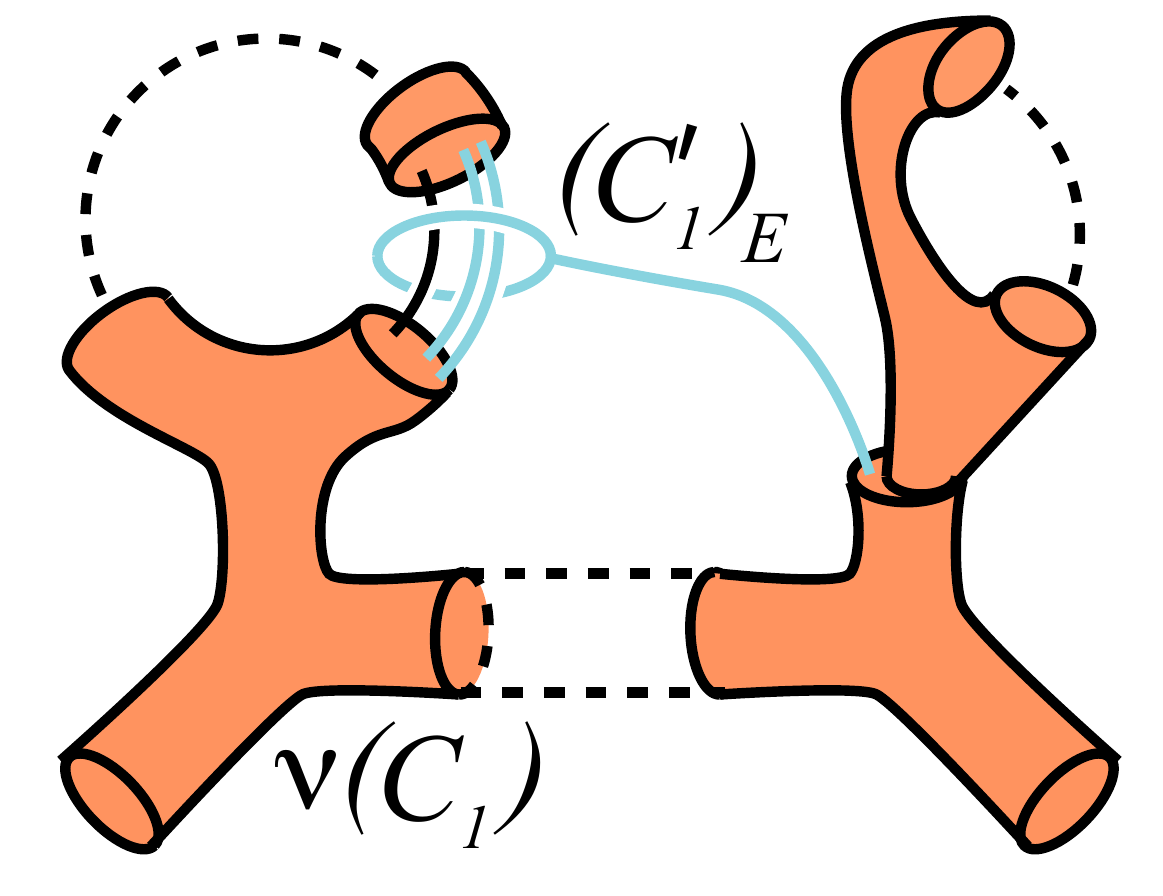}
\end{minipage}
\mapsto
\begin{minipage}{2.0in}
\includegraphics[width=2.0in]{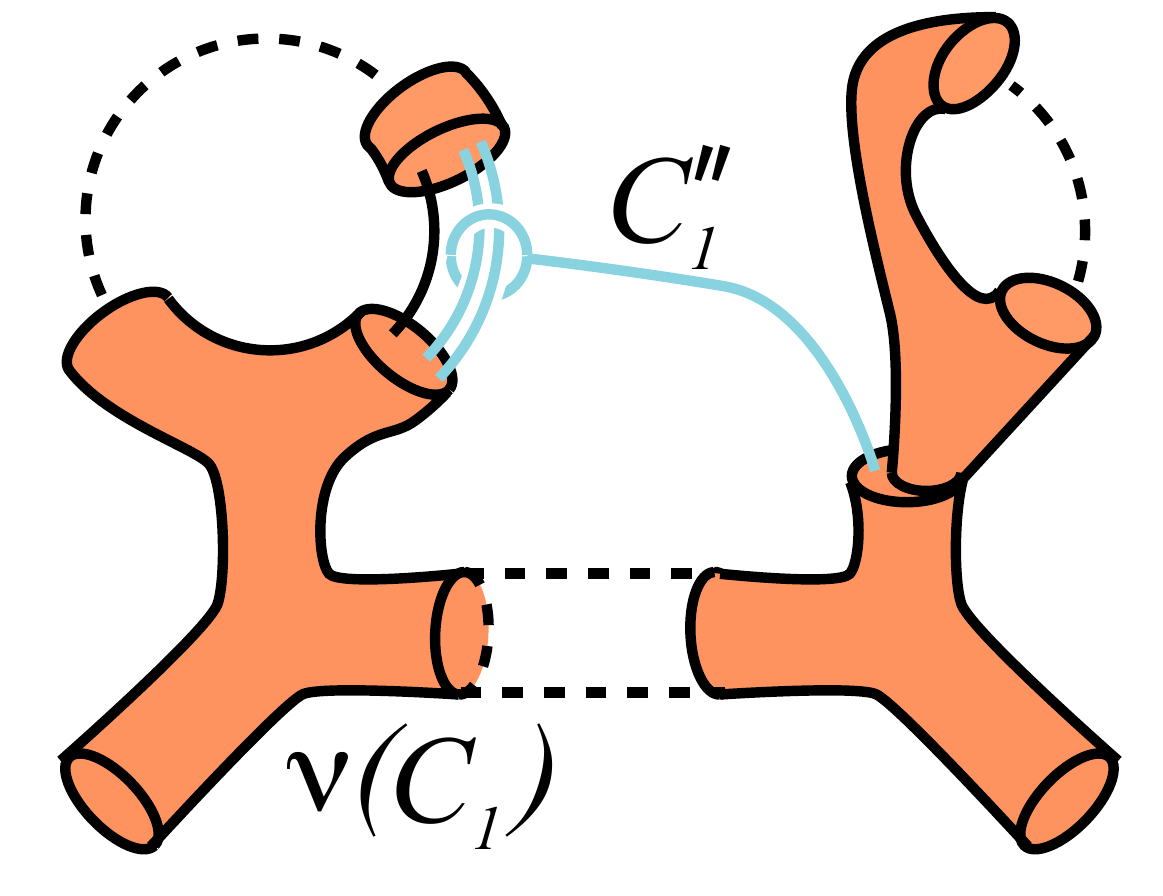}
\end{minipage}
$$
The above-pictured operation of pulling the leaf of $(C_1')_E$ across 
 the indicated leaf of $C_1$ corresponds to a $2n$-equivalence, by Lemma~\ref{lem:crossing}. And by Lemma~\ref{lem:bordism}, clasper surgery on $C_1\cup C_1''$ in the above picture on the right is homology bordant to surgery on $C_1$, completing the proof.
\end{proof}

\begin{lem}\label{lem:edgeleafcrossing}
Let $C_1$ and $C_2$ be order $n$ tree claspers embedded in a homology cylinder $M$ such that $C_2$ differs from $C_1$ by a finger move of an edge of $C_1$ through a leaf of $C_1$, or by a finger move of an edge of $C_1$ though an edge of $C_1$. Then $M_{C_1}\sim_{2n}M_{C_2}$ up to homology bordism.
\end{lem}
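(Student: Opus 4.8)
The plan is to imitate the proof of Lemma~\ref{lem:claspercrossing} almost line for line, the new feature being that an \emph{edge} of $C_1$ now participates in the finger move in place of a leaf, so that the crossing changes at the end of the argument involve an edge rather than only leaves. Since a $(2n+1)$-equivalence is in particular a $2n$-equivalence, this causes no real trouble.

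First I would realize the finger move by surgery on an auxiliary clasper $E$ with a single edge and two leaves, each leaf a small meridian of one of the two strands of $C_1$ involved --- an edge and a leaf in the first case, two edges in the second --- so that $C_2=(C_1)_E$. As both leaves of $E$ bound embedded disks in $M$, Lemma~\ref{lem:bordism} shows $M_{C_2}$ is homology bordant to $M_{C_1\cup E}$. Next I would apply the zip construction to cut a suitable leaf of $C_1$ along a framed arc, producing a daughter clasper $C_1'$ of order $n$ whose distinguished leaf encircles one leaf of $E$; in the edge-through-edge case the cut arc first has to be run parallel to an edge of $C_1$ all the way into the finger region, which is legitimate since the daughter's edges are allowed to thread a regular neighborhood $\nu(C_1)$. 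This identifies $M_{C_1\cup E}$ with $M_{C_1\cup C_1'\cup E}$.

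Now I would surger $E$. On $C_1$ this merely clasps one of its edges with the distinguished leaf of $C_1'$ --- a clasp that is about to be undone --- while on $C_1'$ it replaces the distinguished leaf by a small loop linking an \emph{edge} of $C_1$ and otherwise bounding a disk disjoint from $C_1$. Pulling this loop off that edge is a crossing change of a leaf of an order-$n$ clasper through an edge of an order-$n$ clasper; by Lemma~\ref{lem:crossing}(2) it alters the surgered manifold only up to $(2n+1)$-equivalence, hence up to $2n$-equivalence, and it restores $C_1$. Afterwards the distinguished leaf of $(C_1')_E$ bounds an embedded disk disjoint from $C_1$, so by Lemma~\ref{lem:bordism} surgery on $C_1\cup(C_1')_E$ is homology bordant to surgery on $C_1$. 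Concatenating these steps yields $M_{C_1}\sim_{2n}M_{C_2}$ up to homology bordism. (For the edge-through-edge case there is also the slightly tidier option of arranging the finger move as a crossing of an edge of the daughter $C_1'$ through an edge of $C_1$; being on two distinct claspers, these are governed by Lemma~\ref{lem:crossing}(3), giving a $(2n+2)$-equivalence up to homology bordism.)

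The hard part, just as in Lemma~\ref{lem:claspercrossing}, is the bookkeeping in the zip step: one must pick the cutting arc and track how the daughter clasper's leaf and edges wind through $\nu(C_1)$, so that after surgering $E$ the daughter really does have a single leaf clasping just one edge of $C_1$ --- with everything else bounding disjoint embedded disks --- and so that $C_1$ itself, rather than a modified copy, is what survives at the end. This is routine clasper calculus, entirely parallel to the figures accompanying the proof of Lemma~\ref{lem:claspercrossing}.
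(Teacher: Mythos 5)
Your proof takes a genuinely different route from the paper's, and it has a gap in the edge-through-edge case. The paper's proof is a short reduction: isotope $C_1$ within a handlebody neighborhood $\nu(C_1)$ so that the tree part (all edges and trivalent vertices) sits inside an arbitrarily small ball, pulling the leaves along. After this isotopy the crossings realized by the finger move occur only along leaf strands, so the homotopy decomposes into leaf-leaf crossings and Lemma~\ref{lem:claspercrossing} applies directly. No zip, no auxiliary $E$-clasper.

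Your approach instead mimics the zip-and-$E$ argument from Lemma~\ref{lem:claspercrossing}. The key trouble is in the edge-through-edge case. The zip construction can only cut a \emph{leaf}; it can never detach an edge strand from a meridional $E$-leaf. After you zip some leaf $\lambda$ of $C_1$ and produce the daughter $C_1'$, the edges $e_1$ and $e_2$ of the original clasper still run through the two $E$-leaves $m_1$ and $m_2$. When you then surger $E$, you do not ``merely clasp one of $C_1$'s edges with the distinguished leaf of $C_1'$'': you also reintroduce precisely the edge-edge self-clasp between $e_1$ and $e_2$ that defined the finger move in the first place. Undoing that self-crossing is exactly the statement being proved, so this branch of your argument is circular. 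Running the cutting arc ``parallel to an edge into the finger region'' changes where the daughter's distinguished leaf sits, but it does not change the fact that $e_1$ and $e_2$ themselves still link $m_1$ and $m_2$; no choice of zip arc can cure that. Even in the edge-through-leaf case the daughter clasper $C_1'$ generically picks up self-crossings from surgery on $E$ (its edges thread $\nu(C_1)$ near both $E$-leaves), and Lemma~\ref{lem:crossing} only controls crossings between two \emph{distinct} claspers, so some of those clean-up moves would again require the lemma you are trying to prove. The paper's tree-contraction trick is what sidesteps all of these self-crossing issues by turning every relevant strand into a leaf strand before any crossing is performed.

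One smaller point: the identification $M_{C_2}=M_{C_1\cup E}$ is a diffeomorphism (surgery on the basic clasper $E$ is itself just a crossing change), not an application of Lemma~\ref{lem:bordism}; invoking that lemma there is unnecessary.
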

\begin{proof}
Any clasper homotopies of this form can be realized by leaf-leaf crossings, which are covered by Lemma~\ref{lem:claspercrossing}. To see this note that a tree clasper is the union of a tree and some circles (the leaves). Fixing a handlebody neighborhood of the clasper,
the tree can be contracted by an isotopy to be in an arbitrarily small ball, pulling parts of the leaves toward this ball, and fixing the complement of the handlebody. The homotopies in the lemma are realized by crossing the handlebody through itself. Since all of the edges are in a small ball, this translates into multiple crossings of the leaves.
\end{proof}

In analogy to twisted Whitney towers we introduce twisted claspers. Notice that in a homology cylinder $M$, every (simple) closed curve has a canonical framing: the curve is homologous into $\partial M$, and this homology determines a parallel push-off of the curve. 

\begin{defn}\label{def:twisted-clasper}
A \emph{twisted clasper of order $2n-1$} is an order $n$ tree clasper $C$ embedded in a homology cylinder 
$M$, such that one leaf of $C$ is a $+1$-framed unknot bounding a disk whose interior is disjoint from $C$, and all other leaves have the canonical framing.
\end{defn}
Here ``$+1$-framed'' means that the canonical parallel push-off of the unknotted leaf has $+1$ intersection with the disk bounded by the leaf.

\begin{defn}\label{def:twisted-tree}
The $\iinfty$-tree associated to a twisted order $2n-1$ clasper $C$ is gotten from the usual order $n$ tree by labeling the univalent vertex which corresponds to the $+1$-framed leaf of $C$ by the symbol $\iinfty$ (this $\iinfty$ label replaces the trivial homology element that would usually label such a vertex). Such a tree is called an $\iinfty$-tree, and 
for any rooted tree $J$ the notation $J^\iinfty$ denotes the $\iinfty$-tree gotten by labeling the root of $J$ by $\iinfty$.
\end{defn}

In the next lemma we see that defining the order of a twisted clasper in this way is justified, as surgery along it
is an equivalence of the correct order in $\HC(g,1)$.

\begin{lem}\label{lem:rightorder}
Let $C$ be a twisted clasper of order $2n-1$ embedded in a homology cylinder $M$. Then $M\sim_{2n-1}M_C$ up to homology bordism.
\end{lem}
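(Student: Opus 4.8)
\textbf{Proof proposal for Lemma~\ref{lem:rightorder}.} The plan is to reduce surgery along the twisted clasper $C$ (an order $n$ tree clasper with one $+1$-framed unknotted leaf $\ell$ bounding a disk $D$ disjoint from $C$) to a standard order $2n-1$ clasper surgery, modulo homology bordism. The guiding picture is the familiar identity in clasper calculus that a leaf which is a $\pm 1$-framed unknot can be ``absorbed'' by doubling the subtree hanging off that leaf: surgering along $C$ is equivalent, up to higher-order equivalence and homology bordism, to surgering along a clasper $C'$ obtained by taking two parallel copies of the rooted subtree $J$ adjacent to $\ell$ and joining their roots by an edge (so if $J$ has $n-1$ internal nodes, then $C'$ has $2(n-1)+1 = 2n-1$ internal nodes). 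Since $J^\iinfty$ is the $\iinfty$-tree of order $n$, this produces a clasper of exactly order $2n-1$, which is what the statement asserts.

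First I would set up the zip construction of \cite{CT1} applied to the leaf $\ell$: because $\ell$ is an unknot bounding a disk $D$ disjoint from $C$, the $+1$-framing is exactly the data needed to invoke the standard ``unknotted leaf with $\pm 1$ framing'' move. Concretely, I would express surgery on $C$ in terms of surgery on its associated framed link, then use the fact that a $+1$-framed unknotted component linking a band $b$ (the band being the edge emanating from $\ell$ into the rest of $C$) can be removed at the cost of performing a full twist on the strands passing through $D$ — which in clasper language means replacing the edge at $\ell$ by a parallel doubled copy of the subtree $J$, with a clasp/Hopf-linking between the two roots encoding the $+1$ twist. This is the same mechanism used for twisted Whitney disks, where a twisted Whitney disk of order $k$ contributes a tree $J^\iinfty$ whose ``unrolled'' form $\langle J, J\rangle$ has order $2k-1$.

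Next I would account for the ambiguities. The doubled subtree $C'$ has its two copies of $J$ running parallel through a regular neighborhood $\nu(J)$, and the various ways of pushing one copy past the other (leaf-leaf, leaf-edge, edge-edge crossings) change the resulting homology cylinder only by $\sim_{2n-1+1} = \sim_{2n}$ moves by Lemmas~\ref{lem:crossing}, \ref{lem:claspercrossing} and \ref{lem:edgeleafcrossing}; in particular any such repositioning is invisible at the level we care about, namely $\sim_{2n-1}$. Likewise, Lemma~\ref{lem:bordism} handles the auxiliary clasper components with unknotted, disk-bounding leaves that appear as zip-construction debris: surgery on those is a homology bordism and can be discarded. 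Putting these together, $M_C$ and $M_{C'}$ agree up to $\sim_{2n}$ and homology bordism, and since $C'$ is an honest order $2n-1$ tree clasper, $M\sim_{2n-1}M_{C'}\sim_{2n-1}M_C$ up to homology bordism, as desired.

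The main obstacle I anticipate is making the framing bookkeeping in the zip construction precise: one must check that the $+1$-framing of $\ell$ (rather than the canonical framing carried by the other leaves) is exactly what converts ``double the subtree'' from a $\sim_{2n}$-statement into an honest order $2n-1$ clasper, and that the doubled clasper $C'$ inherits correct (canonical) framings on all its leaves so that it qualifies as a genuine tree clasper in the sense used elsewhere in this section. This is where the proof has to interface carefully with the conventions of subsection~\ref{subsec:clasper-conventions} and with the precise statement of the zip construction from section~4.2 of \cite{CT1}; the rest is an assembly of the crossing-change lemmas already in hand.
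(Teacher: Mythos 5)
There is a genuine gap. Your guiding picture — that blowing down the $+1$-framed unknotted leaf $\ell$ amounts to a full twist on ``the strands passing through $D$,'' which you then want to read off as a doubled clasper — does not quite survive contact with the definitions. By hypothesis the disk $D$ has interior disjoint from $C$, so no part of $C$ passes through $D$; the only thing linking $\ell$ once in the associated framed link is $\ell$'s Hopf partner. A Rolfsen twist on a single strand through $D$ is isotopically invisible and only shifts a framing. So the effect of the $+1$-framing is a framing anomaly on an edge, not a ready-made parallel copy of the subtree, and the claim that this is ``a familiar identity in clasper calculus'' is exactly what has to be proved. The part of your argument that handles ambiguities (Lemmas~\ref{lem:crossing}, \ref{lem:claspercrossing}, \ref{lem:edgeleafcrossing}, \ref{lem:bordism}) is fine, but it is doing cleanup work around a central step you have not established.

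The paper's actual route contains several pieces you are missing, and they are not optional. First, it applies geometric IHX relations to reduce to the case $\iinfty -\!\!\!<^{h}_{J}$, where the twisted leaf sits next to a trivalent vertex that is also adjacent to a genuine leaf $h$; without this normalization the subsequent slide has no clean target. Second, after a Hopf-link insertion to isolate a $Y$-clasper carrying the twisted leaf, it slides the twisted leaf over the adjacent leaf and invokes \cite{GGP} (proofs of Lemma 4.9 and Theorem 3.1) for the fact that this slide \emph{kills} the twist — this is the clasper analog of the boundary-twist maneuver and is the step that actually converts the framing datum into geometry. Third, a zip construction splits the doubled leaf, and a Hopf-link recombination assembles the honest order-$(2n-1)$ clasper, whose tree is $h -\!\!\!<^{J}_{J}$ rather than the edge-join $J\!-\!\!\!-\,J$ you describe (note also that $J\!-\!\!\!-\,J$ has an even number of trivalent vertices, so your ``$2(n-1)+1$'' count does not match ``joining roots by an edge''). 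So while your target statement and your higher-level analogy with twisted Whitney disks are correct, the core transition from the $+1$-framed leaf to a doubled clasper needs the IHX reduction and the GGP twist-killing slide; as written, the proposal asserts this transition rather than proving it.
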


\begin{proof}
By geometric IHX relations \cite{CST,CT2} (which preserve framings) one can write $M_C=M_{C_1\cup\cdots\cup C_k}$, where each $C_i$
is a clasper of order $n$ with a $+1$-framed unknotted leaf adjacent to a trivalent vertex which is adjacent to another leaf. The $+1$-framed unknotted leaf of each $C_i$ bounds an embedded disk which intersects the other claspers and may even have interior intersections with $C_i$. 
Using Lemma~\ref{lem:crossing}, the intersections between this disk and the other claspers may be removed, up to ${2n}$-equivalence. Using Lemma~\ref{lem:claspercrossing} and Lemma~\ref{lem:edgeleafcrossing}, interior intersections of this disk with $C_i$ can also be removed, up to ${2n}$-equivalence and homology bordism.
 
 So it suffices to show the lemma is true for any twisted clasper where the $+1$-framed unknotted leaf is next to another leaf.  
By inserting a Hopf link into an edge (Figure~14 of \cite{CT1}), any such clasper of the form { $ \iinfty-\!\!\!<^h_J$} can be converted (without changing the result of surgery) into an order one  $Y$-clasper with one twisted leaf, one leaf of homology class $h$, and one leaf linking a clasper $C_J$ with rooted tree $J$, as  depicted in Figure~\ref{fig:rightorder}A. 
Sliding the twisted leaf in the $Y$-clasper over the leaf linking $C_J$ results in two parallel leaves linking the $C_J$ as depicted in Figure~\ref{fig:rightorder}B.
\begin{figure}
\begin{center}
\subfloat[]{\includegraphics[width=2.0in]{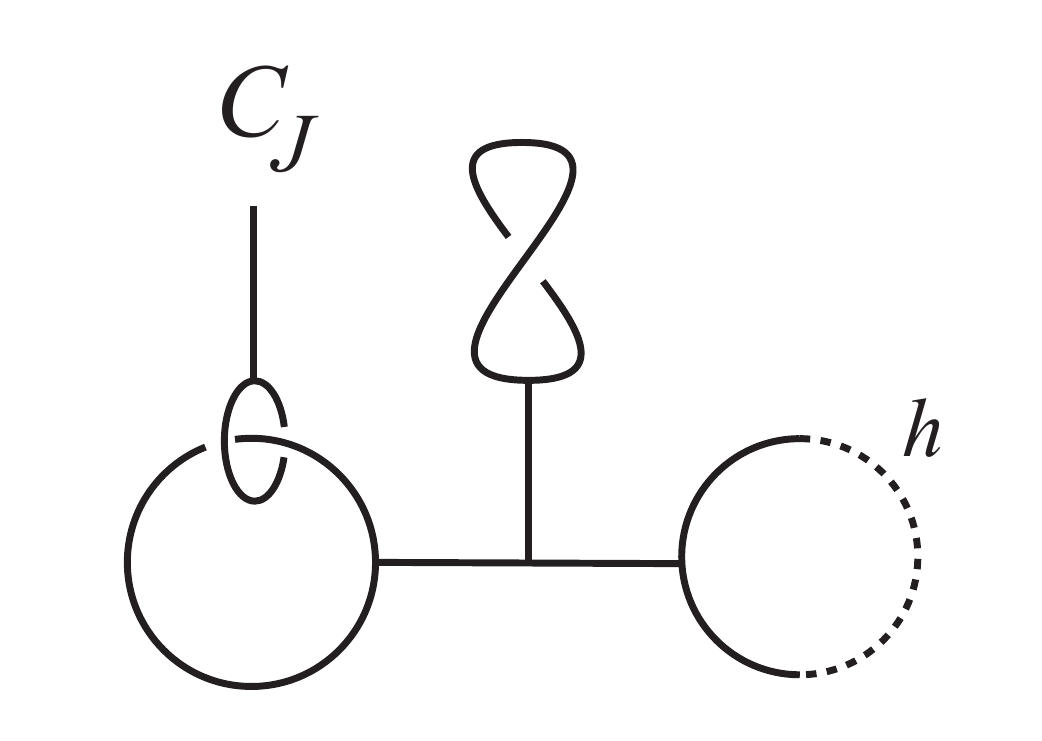}}\hspace{1em}
\subfloat[]{\includegraphics[width=1.85in]{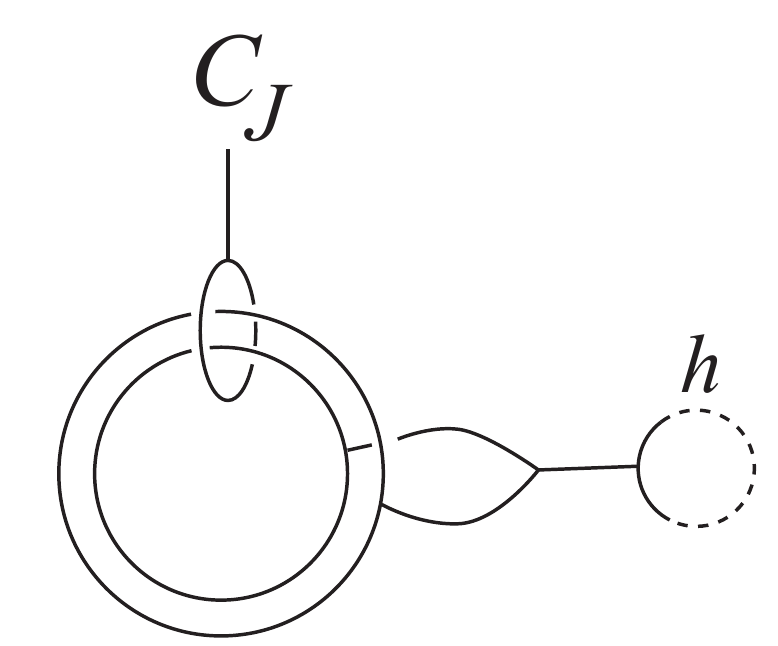}} \hspace{1em}
\subfloat[]{\includegraphics[width=2.0in]{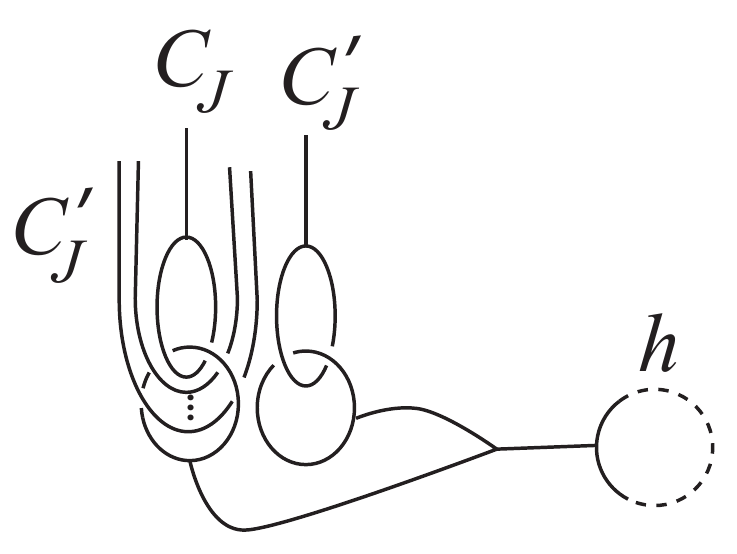}}  \\
\subfloat[]{\includegraphics[width=2.0in]{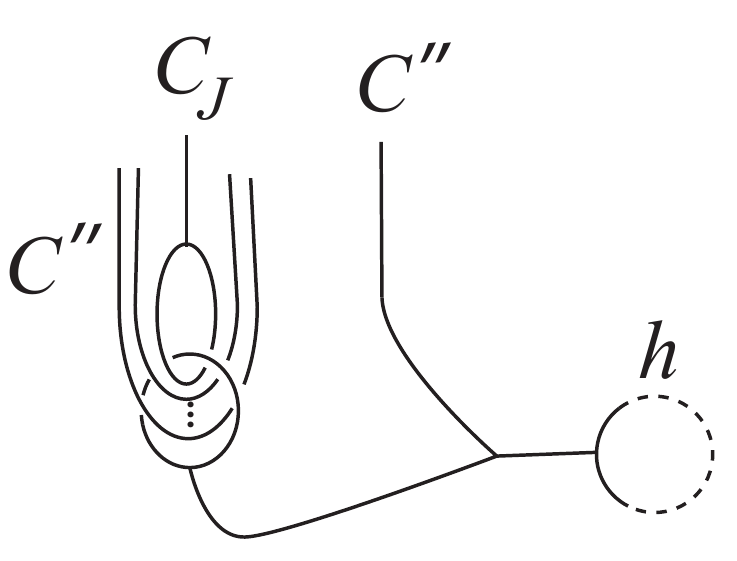}}\hspace{1em}
\subfloat[]{\includegraphics[width=2.0in]{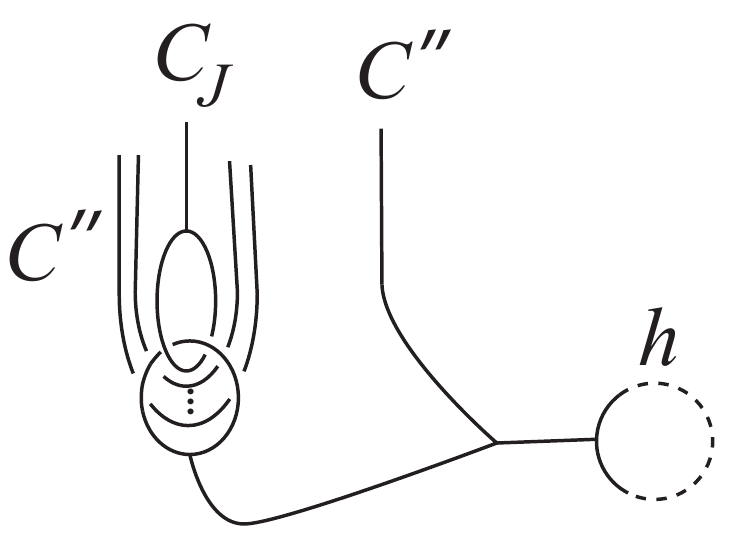}}  \hspace{1em}
\subfloat[]{\includegraphics[width=2.0in]{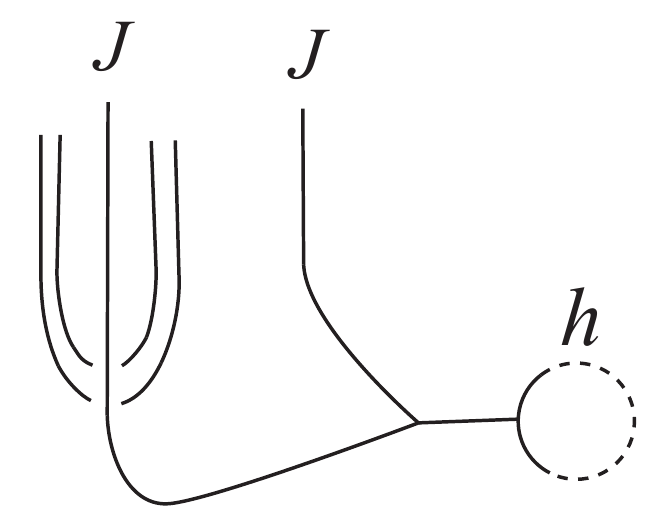}} 
\end{center}
\caption{From the proof of Lemma~\ref{lem:rightorder}.}~\label{fig:rightorder}
\end{figure}
This twisted-leaf-sliding move is pictured in the proof of Lemma 4.9 of \cite{GGP}, and in the proof of Theorem~3.1 of \cite{GGP} it is explained why the twisting gets killed (this is a $3$-dimensional analog of the $4$-dimensional boundary-twist maneuver on a Whitney disk).   Next apply a zip construction \cite{CT1,Hab} to the leaf of $C_J$, splitting apart the parallel leaves of the $Y$-clasper, and turning $C_J$ into claspers $C_J$ and $C'_J$ with parallel leaves and the same tree-type. The edges of $C'_J$ are contained in a neighborhood of $C_J$ (Figure~\ref{fig:rightorder}C).
Then combine $C'_J$ and the $Y$-clasper by converting the right-hand Hopf link into an edge
of an order $n$ clasper $C''$. The left-hand leaf of $C''$ links the root leaf of $C_J$ and also has strands of $C''$ passing through it (Figure~\ref{fig:rightorder}D).
By Lemma~\ref{lem:edgeleafcrossing} these strands of $C''$ can be pushed out by an ${2n}$-equivalence up to homology cobordism, so that the leaf forms a clean Hopf link pair with the root leaf of $C_J$ (Figure~\ref{fig:rightorder}E).
Finally, combining $C_J$ and $C''$ by replacing their Hopf-linked leaves 
with an edge forms an order $2n-1$ clasper of the form  
{ $ \text{\normalsize $h$}-\!\!\!<^J_J$}, which represents an ${2n-1}$-equivalence (Figure~\ref{fig:rightorder}F).
\end{proof}

\begin{lem}\label{lem:bandsum}
Let $C_1$ be a clasper in a homology cylinder $M$ which is either of order $2n-1$ or is twisted of order $2n-1$. Suppose $C_2$ differs from $C_1$ by band-summing an edge $e$ into a simple closed curve $\gamma$ in $M$. Then $M_{C_1}\sim_{2n}M_{C_2}$, up to homology bordism.
\end{lem}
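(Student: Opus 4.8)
The plan is to reduce the band-sum move to a sequence of clasper crossing moves already controlled by Lemmas~\ref{lem:crossing}, \ref{lem:claspercrossing}, and~\ref{lem:edgeleafcrossing}, together with Lemma~\ref{lem:bordism} to absorb the auxiliary clasper produced by a zip construction. First I would observe that band-summing an edge $e$ of $C_1$ into a simple closed curve $\gamma\subset M$ is the same as the following local modification: isotope a small arc of $e$ to run parallel to $\gamma$ once around and back. Since in a homology cylinder $\gamma$ has its canonical framing and is homologically trivial relative to $\partial M$ only after capping (we do not need $\gamma$ null-homologous — we only need $M_{C_1}$ and $M_{C_2}$ homology bordant, which is automatic because surgery on a clasper is always a homology bordism by Lemma~\ref{lem:bordism}), the band-summed edge picks up whatever linking $\gamma$ has with the rest of $C_1$ and with itself.

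The key steps, in order. (1) Realize the band sum by surgery on an auxiliary order-one clasper $E$ whose leaves are meridians: one leaf of $E$ is a meridian of the edge $e$, the other is a small loop that, after surgery, drags $e$ around $\gamma$; thus $C_2 = (C_1)_E$ up to isotopy, exactly as in the proof of Lemma~\ref{lem:claspercrossing}. (2) Apply the zip construction of section~4.2 of \cite{CT1} to the leaf of $C_1$ adjacent to $e$ (or to $e$ itself via a Hopf-link insertion as in Lemma~\ref{lem:rightorder}), splitting $C_1$ into $C_1$ together with a daughter clasper $C_1'$ of the same tree-type whose edges wind through a regular neighborhood $\nu(C_1)$. (3) Surger along $E$; this pushes the relevant leaf of $(C_1')_E$ across leaves/edges of $C_1$. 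Each such crossing is a leaf-leaf, leaf-edge, or edge-edge crossing between two claspers of total order $(2n-1)+(2n-1) = 4n-2 \geq 2n$ (here I use $n\geq 1$), so by Lemma~\ref{lem:crossing} each costs at most a $(4n)$-equivalence, in particular a $2n$-equivalence. (4) What remains is $C_1 \cup C_1''$ where $C_1''$ is a clasper with a leaf bounding a disk (possibly meeting $C_1''$); by Lemma~\ref{lem:bordism} surgery on this is homology bordant to surgery on $C_1$ alone. Internal interactions of the winding daughter clasper with $C_1$ itself (edge-leaf, edge-edge finger moves) are removed up to $2n$-equivalence and homology bordism by Lemmas~\ref{lem:claspercrossing} and~\ref{lem:edgeleafcrossing}, exactly as in the proof of Lemma~\ref{lem:rightorder}. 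In the twisted case, the only difference is that one leaf of $C_1$ is the $+1$-framed unknot; the band sum is on an \emph{edge}, not that leaf, so the twisted leaf plays no special role and the same argument applies verbatim (and the twisted clasper still has $n$ internal nodes, so the order count is unchanged).

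The main obstacle I expect is bookkeeping the daughter clasper $C_1'$ correctly: after the zip construction its edges run through $\nu(C_1)$ in a complicated way, so when $E$-surgery drags it around $\gamma$ one must check that every resulting crossing is genuinely between \emph{distinct} claspers (so Lemma~\ref{lem:crossing} applies with the additive order estimate) rather than a self-interaction of a single clasper (which would need the finer Lemmas~\ref{lem:claspercrossing}--\ref{lem:edgeleafcrossing}). This is the same technical point that powers Lemma~\ref{lem:rightorder}, and the resolution is identical: organize the isotopy so that $C_1$ is first shrunk into a small ball with its leaves pulled along, so that all interactions become controlled leaf crossings. Everything else is routine clasper calculus, and the order estimate is comfortably met since $2\cdot(2n-1) \geq 2n$ for $n\geq 1$. $\square$
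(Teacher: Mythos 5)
Your proposal has a genuine gap, and it comes from modeling the argument on the proof of Lemma~\ref{lem:claspercrossing} (leaf-through-leaf finger moves) when the band-sum of an edge is a structurally different move. In the crossing lemma, the auxiliary clasper $E$ has \emph{both} leaves being meridians to pieces of $C_1$; the zip construction can then be arranged so that one loop of the cut leaf is a meridian to an $E$-leaf, and after $E$-surgery the daughter clasper has a leaf that genuinely bounds a disk, so Lemma~\ref{lem:bordism} applies. In the band-sum move, the auxiliary order-$0$ clasper $E$ has one leaf a meridian to the edge $e$ and the other leaf \emph{a copy of $\gamma$}, which is an arbitrary simple closed curve. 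There is no mechanism in your steps (2)--(4) that produces a daughter clasper with a leaf bounding a disk: the $\gamma$-leaf carries whatever homology class $\gamma$ has, and your step (4) asserting a leaf bounding a disk is unjustified. (Your preliminary remark that ``we only need $M_{C_1}$ and $M_{C_2}$ homology bordant'' is also beside the point: the lemma asserts $M_{C_1}\sim_{2n}M_{C_2}$ up to homology bordism, not merely homology bordism.)

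The mechanism the paper actually uses is different in kind: it invokes the clasper move (Habiro, Figure~30) by which band-summing an edge $e$ into $\gamma$ rewrites $M_{C_2}$ as $M_{C_1\cup C'}$, where $C'$ has the tree of $C_1$ with a \emph{new edge attached at an interior point of $e$} ending in a leaf $\gamma$. Crucially this attaches a new trivalent vertex, so $C'$ has one \emph{higher} order than $C_1$. That order gain is the whole content of the untwisted case: if $C_1$ has $2n-1$ nodes then $C'$ has $2n$, so surgery on $C'$ is already a $2n$-equivalence and you are done, with no need for any disk or any crossing estimate. This is exactly why this lemma, unlike Lemma~\ref{lem:nullhomologous} for band-summing a \emph{leaf}, imposes no null-homology condition on $\gamma$: band-summing an edge automatically raises order, band-summing a leaf does not. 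Your approach would produce a daughter clasper $C_1'$ of the \emph{same} tree type and thus the same order, and then the only way to discard it is the disk argument, which fails.

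The twisted case is also not ``verbatim''. There $C_1$ is an order-$n$ tree clasper with a $+1$-framed leaf, and the parallel copy $C'$ inherits a twisted leaf that links $C_1$'s twisted leaf and edges. The paper has to first disentangle these by Lemma~\ref{lem:crossing} (a $(2n+1)$-equivalence, since $C_1$ has $n$ nodes and $C'$ has $n+1$), turning $C'$ into a genuine twisted clasper of order $2n+1$, which can then be discarded by Lemma~\ref{lem:rightorder}. So the twisted leaf does play an active role and requires a short but separate argument; dismissing it is a second gap in the proposal.

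A minor point: the auxiliary clasper $E$ realizing the band-sum has one edge and two leaves (one the meridian to $e$, the other a parallel copy of $\gamma$), which in the paper's conventions is order $0$, not order $1$.
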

\begin{proof}
This move is realized by surgery on an order $0$ clasper one of whose leaves is a meridian to the edge, and the other is a copy of $\gamma$, as depicted in Figure~\ref{fig:bandsum}A.
By clasper calculus (see \cite{Hab}, Figure 30), we can write $M_{C_2}=M_{C_1\cup C'}$, where $C'$ has order $n+1$ and the tree of
$C'$ is gotten from that of $C_1$ by attaching a new edge to an interior point in $e$ (with the leaf of the new edge given by $\gamma$). See Figure~\ref{fig:bandsum}B.
\begin{figure}
\subfloat[]{\includegraphics[width=2.5in]{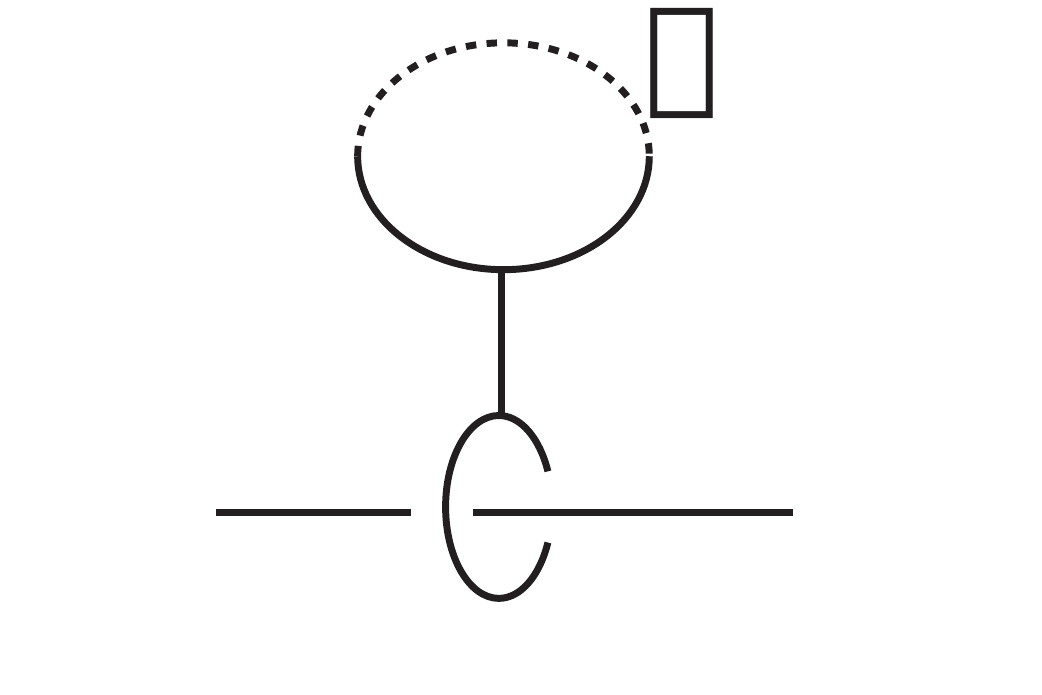}}\hspace{2em}
\subfloat[]{\includegraphics[width=2.5in]{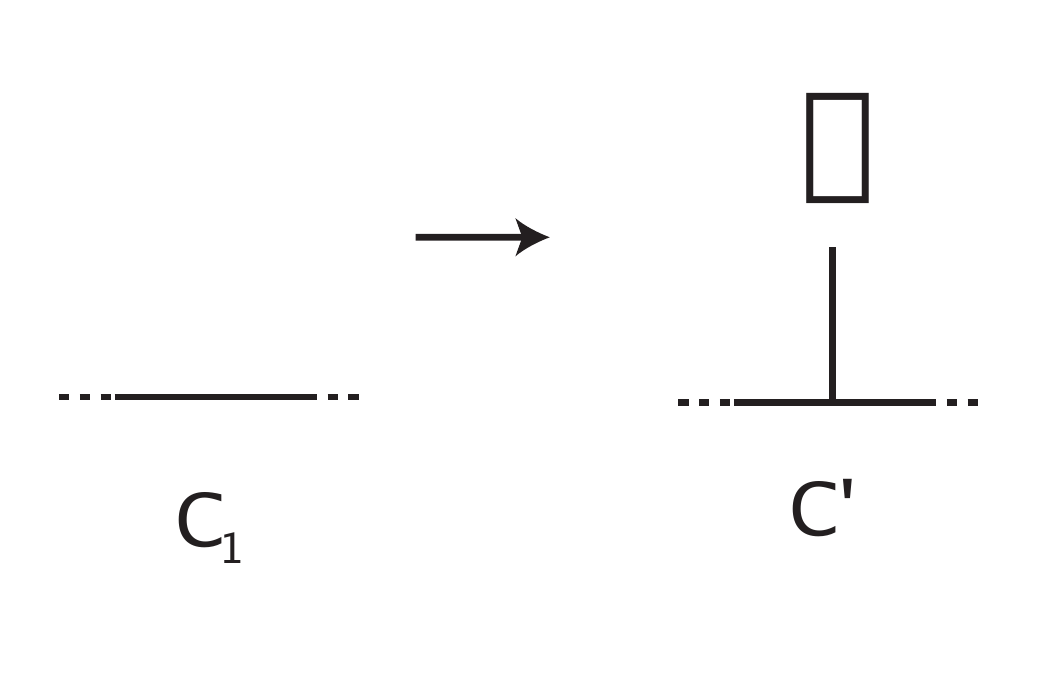}}
\caption{From the proof of Lemma~\ref{lem:bandsum}.}\label{fig:bandsum}
\end{figure}
We may arrange that $C_1$ sits in a regular neighborhood of $C'$, which is the clean daughter clasper in the zip construction. In the case that $C_1$ is of order $2n-1$, we are done, since $C'$ is of order $2n$, so now assume that $C_1$ is twisted of order $2n-1$. A leaf and edges of $C'$ link the twisted leaf of $C_1$  nontrivially, but by Lemma~\ref{lem:crossing}  we can ``disentangle'' the twisted leaf of $C'$ from the clasper $C_1$ by an ${2n+1}$-equivalence so that $C'$ has a clean twisted leaf.  Thus $C'$ has been modified to be twisted of order $2n+1$. So modulo $\sim_{2n}$ we can disregard $C'$, since it determines an ${2n+1}$-equivalence by Lemma~\ref{lem:rightorder}, and we have $M_{C_2}\sim_{2n}M_{C_1}$.
\end{proof}

The following is a version of Lemma~\ref{lem:crossing} for twisted claspers.
\begin{lem}\label{lem:twistedcrossing}
Let $C_1\cup C_2$ be an embedding of two twisted tree claspers in a homology cylinder $M$ each of order $2n-1$. Suppose $C_1'\cup C_2'$ is an embedding that differs by crossing a strand of a leaf of $C_1$ across a leaf of $C_2$. Then $M_{C_1\cup C_2}\sim_{2n}M_{C_1'\cup C_2'}$.
\end{lem}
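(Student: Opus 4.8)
The statement to prove is Lemma~\ref{lem:twistedcrossing}: crossing a strand of a leaf of $C_1$ across a leaf of $C_2$, where both are twisted claspers of order $2n-1$, changes the surgery result only up to $\sim_{2n}$. The plan is to reduce this to the already-established tools: Lemma~\ref{lem:crossing} (the untwisted crossing-change calculus), Lemma~\ref{lem:rightorder} (surgery on a twisted clasper of order $2n-1$ is a $(2n-1)$-equivalence up to homology bordism), and the zip construction from \cite{CT1}. First I would realize the crossing change by surgery on an auxiliary order $0$ clasper $E$ with one edge and two leaves, each a meridian of one of the two leaves being crossed — exactly as in the proof of Lemma~\ref{lem:claspercrossing} — so that $C_1' \cup C_2' = (C_1 \cup C_2)_E$.

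\textbf{Main steps.} Next I would apply the zip construction to split one of the $E$-leaves, or rather to split the leaf of $C_1$ that is being pushed, so that the new daughter clasper $C_1'$ becomes a meridian to an $E$-leaf while its edges wind through a regular neighborhood $\nu(C_1)$. This converts the problem into controlling a collection of crossings of $\nu(C_1)$-strands (carrying a leaf and edges of $C_1'$) with leaves and edges of $C_2$ and with the twisted leaf. Each leaf-leaf crossing between a $C_1'$-strand and a $C_2$-leaf costs (order $C_1'$) $+$ (order $C_2$) $= (n{+}\text{something}) + n$; here the key bookkeeping point is that $C_1'$ has order $\geq n$ and $C_2$ has order $n$, so by Lemma~\ref{lem:crossing}(i) any such crossing is a $\sim_{\geq 2n}$-move, hence negligible. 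Leaf-edge and edge-edge crossings (Lemma~\ref{lem:crossing}(ii),(iii)) cost even more and are likewise negligible. The delicate part is the interaction with the \emph{twisted} leaves of $C_1$ and $C_2$: a strand of $C_1'$ (or of the $E$-clasper) may be forced to link the $+1$-framed leaf. But since we only need $\sim_{2n}$, and a twisted clasper of order $2n-1$ with a few extra strands through its twisted leaf can, by Lemma~\ref{lem:crossing}, have those strands disentangled at the cost of a $\sim_{2n}$-move — after which Lemma~\ref{lem:rightorder} says the residual twisted clasper $C_1'$ (now of order $\geq 2n$ after the zip, or promoted to twisted of order $\geq 2n+1$ as in the proof of Lemma~\ref{lem:bandsum}) contributes only a $\sim_{>2n}$-equivalence up to homology bordism — it can be discarded. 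Finally I would invoke Lemma~\ref{lem:bordism} to absorb the clean ``daughter'' pieces of the zip into a homology bordism, leaving $M_{C_1 \cup C_2} \sim_{2n} M_{C_1' \cup C_2'}$ up to homology bordism.

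\textbf{Expected main obstacle.} The hard part will be the careful order-counting around the twisted leaves: one must check that \emph{every} piece produced by the zip construction applied in the presence of the $+1$-framed unknotted leaves has order (or twisted order, counted as in Definition~\ref{def:twisted-tree}) at least $2n$, so that it is killed modulo $\sim_{2n}$. Because a twisted clasper of order $2n-1$ has only $n$ internal nodes, the margin is thin, and the argument must use that the daughter clasper from the zip acquires at least one extra node (raising it to order $\geq n+1$, hence a leaf-leaf crossing with an order $n$ clasper lands in $\sim_{2n+1}$), together with the observation from Lemma~\ref{lem:bandsum} that disentangling strands from a twisted leaf promotes the twisted order by at least two. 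Assembling these estimates in the right order — zip first, then disentangle, then discard, then bordism — is the crux; the individual moves are all routine clasper calculus already packaged in Lemmas~\ref{lem:bordism}--\ref{lem:bandsum}.
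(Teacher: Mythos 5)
Your plan takes a genuinely different and much longer route than the paper. You re-derive a crossing estimate from scratch --- auxiliary order-$0$ clasper $E$, zip construction, order bookkeeping through $\nu(C_1)$, separate handling of the $+1$-framed leaves --- but the paper's proof is a one-line observation: by Definition~\ref{def:twisted-clasper} a twisted clasper of order $2n-1$ \emph{is} an ordinary tree clasper with $n$ internal nodes (one of whose leaves happens to be a $+1$-framed unknot). Lemma~\ref{lem:crossing}(i) is stated for arbitrary tree claspers and is insensitive to leaf framings, so applying it with $n_1=n_2=n$ immediately yields $M_{C_1\cup C_2}\sim_{2n}M_{C_1'\cup C_2'}$. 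The twisted structure plays no role in this lemma. Your plan essentially re-proves Lemma~\ref{lem:crossing} inside the argument rather than citing it, and imports machinery (zip, Lemma~\ref{lem:rightorder}, promotion as in Lemma~\ref{lem:bandsum}) that is needed in the sibling result Lemma~\ref{lem:twistedclaspercrossing} --- where the crossing is a \emph{self}-crossing of a single clasper and Lemma~\ref{lem:crossing} cannot be invoked with two factors --- but is superfluous here, where the two claspers are distinct.

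There is also a concrete miscount in your order accounting that would bite if you carried out the plan. You assert that ``the daughter clasper from the zip acquires at least one extra node (raising it to order $\geq n+1$).'' The zip construction, as recalled in Section~\ref{subsec:clasper-conventions}, cuts a leaf into two loops and produces two daughter claspers of the \emph{same} order as the original; it does not create internal nodes. The extra node in Lemma~\ref{lem:bandsum} comes from the band-sum move (attaching a new edge at an interior point), not from zip. Similarly, the claim that ``disentangling strands from a twisted leaf promotes the twisted order by at least two'' is a misreading of how Lemma~\ref{lem:bandsum} discards its auxiliary clasper $C'$. With these corrected, your estimates land at $\sim_{2n-1}$ rather than $\sim_{2n}$ at the crucial step, so the argument as sketched would not close. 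None of this is needed, though: the paper's two-sentence proof is correct and self-contained.
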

\begin{proof}
Each of the twisted claspers $C_1$ and $C_2$ can be thought of as a clasper of order $n$. Then Lemma~\ref{lem:crossing} proves that $M_{C_1\cup C_2}\sim_{2n}M_{C_1'\cup C_2'}$.
\end{proof}

We also need a version of Lemma~\ref{lem:claspercrossing} for twisted claspers.

\begin{lem}\label{lem:twistedclaspercrossing}
Let $C_1$ be a twisted tree clasper of order $2n-1$ embedded in a homology cylinder $M$.
If $C_2$ is any twisted tree clasper formed from $C_1$ by a finger move that pushes an arc of a leaf of $C_1$ across another leaf of $C_1$, then $M_{C_1}\sim_{2n}M_{C_2}$ up to homology bordism. 
\end{lem}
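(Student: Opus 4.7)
The plan is to mirror the proof of Lemma~\ref{lem:claspercrossing}, substituting Lemma~\ref{lem:twistedcrossing} in place of Lemma~\ref{lem:crossing} at the crossing step. First I would realize the finger move taking $C_1$ to $C_2$ by surgery on an auxiliary clasper $E$ with one edge and two leaves, each a meridian to one of the two leaves of $C_1$ involved in the finger move, so that $C_2 = (C_1)_E$. Next, apply the zip construction to one of the leaves of $C_1$ adjacent to $E$, producing a configuration $C_1 \cup C_1'$ whose surgery agrees with $(C_1)_E$.

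The daughter clasper $C_1'$ has the same underlying tree (of order $n$) as $C_1$, with leaves parallel to those of $C_1$ except at the split leaf. Because the twisted leaf of $C_1$ is a $+1$-framed unknot bounding an embedded disk whose interior is disjoint from $C_1$, its parallel push-off (in the canonical framing coming from the homology cylinder) remains a $+1$-framed unknot bounding an embedded disk; after cleaning up any stray intersections of this disk with $C_1 \cup C_1'$ using Lemmas~\ref{lem:crossing}, \ref{lem:claspercrossing}, and \ref{lem:edgeleafcrossing} (each at a cost absorbed by the target $\sim_{2n}$-equivalence), $C_1'$ is again a twisted clasper of order $2n-1$. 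After $E$-surgery, one strand of a leaf of $C_1'$ must be pulled across a leaf of $C_1$; by Lemma~\ref{lem:twistedcrossing} applied to the two twisted claspers $C_1$ and $C_1'$, this costs only a $\sim_{2n}$-equivalence. The resulting configuration $C_1 \cup C_1''$ has a leaf of $C_1''$ bounding the embedded disk determined by the zip-splitting arc, so Lemma~\ref{lem:bordism} makes surgery on $C_1 \cup C_1''$ homology bordant to surgery on $C_1$ alone. This yields $M_{C_2} \sim_{2n} M_{C_1}$ up to homology bordism, and the self-crossing case (one leaf of $C_1$ crossing itself) is handled analogously.

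The main obstacle will be verifying that the zip construction preserves the twisted-clasper structure of $C_1'$: one must check carefully that the parallel push-off of the $+1$-framed unknotted twisted leaf remains a $+1$-framed unknot bounding an embedded disk (possibly after the cleanup moves mentioned above), and that no new framing obstructions are introduced on the other leaves of $C_1'$. Provided this verification goes through, the rest of the argument is a routine translation of the proof of Lemma~\ref{lem:claspercrossing} into the twisted setting.
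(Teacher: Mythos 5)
Your proof mirrors the paper's argument exactly: trace through the proof of Lemma~\ref{lem:claspercrossing}, substituting Lemma~\ref{lem:twistedcrossing} for Lemma~\ref{lem:crossing} at the leaf-crossing step. The paper's own proof is a two-line version of precisely this.

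That said, the step you flag as ``the main obstacle'' --- verifying that the zip-daughter $C_1'$ is again a twisted clasper --- is unnecessary, and the paper does not address it. All that matters is that $C_1'$, like $C_1$, has $n$ internal vertices, i.e.\ is a tree clasper of order $n$. Lemma~\ref{lem:twistedcrossing} is only a restatement of part (i) of Lemma~\ref{lem:crossing} (its proof reads ``think of the twisted claspers as claspers of order $n$''), so the $\sim_{2n}$-equivalence at the crossing step follows with no requirement that the twisted leaf of $C_1'$ bound a clean disk. Likewise, the final appeal to Lemma~\ref{lem:bordism} asks only that some leaf of $C_1''$ bound an embedded disk, which may meet the rest of the clasper, and so is indifferent to whether $C_1''$ qualifies as a twisted clasper. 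Incidentally, the cleanup you propose would not even produce a clean twisted leaf: as the paper notes in the proof of Lemma~\ref{lem:nullhomologous}, after a zip the twisted leaves of parent and daughter interlink. That interlinking could be unwound at the cost of further $\sim_{2n}$-equivalences, but for the present lemma there is no reason to bother.
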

\begin{proof}
We trace through the proof of Lemma~\ref{lem:claspercrossing}. The only step of the proof that was not a diffeomorphism or a homology bordism is the step at the end of the proof where a leaf of $(C_1')_E$ is pushed across a leaf of $C_1$, which is a ${2n}$-equivalence  by Lemma~\ref{lem:twistedcrossing}.
\end{proof}

\begin{lem}
Let $C_1$ and $C_2$ be  twisted order $2n-1$ tree claspers embedded in a homology cylinder $M$ which differ by a finger move of an edge through a leaf of $C_1$ or by a finger move of an edge though an edge of $C_1$. Then $M_{C_1}\sim_{2n}M_{C_2}$ up to homology bordism.
\end{lem}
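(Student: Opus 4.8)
The plan is to mimic the proof of Lemma~\ref{lem:edgeleafcrossing} in the twisted setting, reducing edge-through-leaf and edge-through-edge finger moves of a twisted clasper to a sequence of leaf-through-leaf finger moves, which are already handled by Lemma~\ref{lem:twistedclaspercrossing}. The key geometric observation is that a twisted tree clasper of order $2n-1$ is, after forgetting the framing data on the distinguished $+1$-framed unknotted leaf, just an order $n$ tree clasper in the usual sense; so the combinatorial ``contract the tree into a small ball'' argument is available verbatim.

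First I would fix a handlebody neighborhood $\nu(C_1)$ of the underlying tree of $C_1$ (the union of the order-$n$ tree and its leaf circles, one of which is the $+1$-framed unknot). As in the proof of Lemma~\ref{lem:edgeleafcrossing}, an ambient isotopy contracts the tree portion of $C_1$ into an arbitrarily small ball, dragging along arcs of all the leaves (including the twisted leaf) and fixing the complement of the handlebody; the isotopy does not change the diffeomorphism type of $M_{C_1}$. A finger move of an edge of $C_1$ through a leaf or through an edge of $C_1$ is then realized by passing the handlebody $\nu(C_1)$ through itself, and since the entire tree now lies in a small ball, this handlebody self-crossing is expressed as a finite sequence of finger moves in which \emph{arcs of leaves} of $C_1$ are pushed across \emph{leaves} of $C_1$. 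Each such leaf-through-leaf move is a $2n$-equivalence up to homology bordism by Lemma~\ref{lem:twistedclaspercrossing}, and since $\sim_{2n}$ (up to homology bordism) is an equivalence relation, composing the finitely many moves gives $M_{C_1}\sim_{2n}M_{C_2}$ up to homology bordism, as desired.

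The one point requiring a little care — and the main obstacle — is that in carrying out this reduction we must not accidentally tamper with the defining structure of a twisted clasper: the distinguished leaf must remain a $+1$-framed unknot bounding a disk disjoint from the rest of the clasper, and all other leaves must keep their canonical framings. The contraction isotopy is framing-preserving and does not alter which leaf is the twisted one, so the intermediate pictures are still twisted claspers in the sense of Definition~\ref{def:twisted-clasper}; and Lemma~\ref{lem:twistedclaspercrossing} is precisely designed to take twisted-clasper input to twisted-clasper output. Thus at every stage the objects to which we apply Lemma~\ref{lem:twistedclaspercrossing} genuinely are twisted order $2n-1$ claspers, and the argument goes through. (If one prefers, one can instead observe that the finger moves in question never involve the $+1$-framed leaf in an essential way — an edge-leaf or edge-edge move can always be pushed off that leaf first using Lemma~\ref{lem:twistedcrossing} at the cost of a $2n$-equivalence — and then apply the untwisted Lemma~\ref{lem:edgeleafcrossing} to the underlying order $n$ clasper; either route yields the claim.)
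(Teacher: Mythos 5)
Your proposal matches the paper's proof exactly: the paper also reduces edge-leaf and edge-edge finger moves to leaf-leaf crossings by the same contraction argument from Lemma~\ref{lem:edgeleafcrossing} and then invokes the twisted leaf-leaf case (Lemma~\ref{lem:twistedclaspercrossing}). You have simply spelled out the details that the paper leaves to the reader, including the check that twisted-clasper structure is preserved throughout.
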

\begin{proof}
As in the proof of Lemma~\ref{lem:edgeleafcrossing}, this follows from the leaf-leaf case.
\end{proof}

\begin{lem}\label{lem:nullhomologous}
Let $C_1$ be a clasper in a homology cylinder $M$ which is either of order $2n-1$ or is twisted of order $2n-1$. Suppose $C_2$ differs from $C_1$ by band-summing a leaf into a null-homologous simple closed curve in $M$. Then $M_{C_1}\sim_{2n}M_{C_2}$ up to homology bordism. 
\end{lem}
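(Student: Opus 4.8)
\textbf{Proof proposal for Lemma~\ref{lem:nullhomologous}.}
The plan is to reduce the band-sum of a leaf into a null-homologous curve $\gamma$ to a controlled collection of clasper surgeries whose orders can all be pushed past $2n$. First I would set up the move geometrically exactly as in Lemma~\ref{lem:bandsum}: band-summing a leaf $\ell$ of $C_1$ into $\gamma$ is realized by surgery on an order $0$ ($I$-shaped) clasper $D$, one of whose leaves is a meridian of $\ell$ and the other is a parallel copy of $\gamma$. Then by the clasper calculus move ``sliding a leaf along a band'' (Figure~30 of \cite{Hab}, as used in Lemma~\ref{lem:bandsum}), we can write $M_{C_2}=M_{C_1\cup C'}$, where $C'$ is obtained from $C_1$ by attaching a new edge, ending in a leaf bounding the curve $\gamma$, to the univalent vertex carrying $\ell$. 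The key point is now that since $\gamma$ is null-homologous, this new leaf of $C'$ bounds a \emph{null-homologous} surface in $M$; more precisely, using the splitting/linearity of clasper leaves by homology class (as discussed after Theorem~\ref{thm:Levine}) together with Lemma~\ref{lem:bordism}, a clasper leaf carrying a null-homologous curve contributes nothing and can be removed up to homology bordism and higher-order equivalence.

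The main steps, in order, would be: (1) realize the band-sum as surgery on $D$ and apply the leaf-slide calculus move to obtain $M_{C_2}=M_{C_1\cup C'}$ with $C'$ of order one more than $C_1$ (so order $2n$ in the untwisted case, or ``twisted of order $2n$'' when $C_1$ is twisted) and with one leaf equal to $\gamma$; (2) arrange, via the zip construction of \cite{CT1}, that $C_1$ sits in a regular neighborhood of $C'$, as in the proof of Lemma~\ref{lem:bandsum}; (3) in the untwisted case, observe that $C'$ is order $2n$ and so $M_{C_1 \cup C'}\sim_{2n} M_{C_1}$, finishing; (4) in the twisted case, first use Lemma~\ref{lem:twistedcrossing} (or Lemma~\ref{lem:crossing}, viewing the twisted clasper as an order $n$ clasper) to disentangle the twisted leaf of $C_1$ from $C'$ up to $\sim_{2n+1}$, so that $C'$ becomes an ordinary (untwisted) clasper of order $2n$; then $M_{C_1\cup C'}\sim_{2n}M_{C_1}$ as before. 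In both cases we conclude $M_{C_1}\sim_{2n} M_{C_2}$ up to homology bordism.

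The step I expect to be the main obstacle is step (3)--(4), namely making rigorous that the leaf carrying the null-homologous curve $\gamma$ really does force $C'$ to be negligible at order $2n$: one must be careful that ``null-homologous'' gives a leaf bounding a (possibly non-embedded, possibly clasper-intersecting) surface, and that Lemma~\ref{lem:bordism} together with the crossing lemmas (Lemma~\ref{lem:crossing}, Lemma~\ref{lem:claspercrossing}, Lemma~\ref{lem:edgeleafcrossing} and their twisted analogues) suffices to clean this surface up to $\sim_{2n}$ and homology bordism, just as in the proof of Lemma~\ref{lem:rightorder}. A secondary subtlety, in the twisted case, is bookkeeping the framing of the new leaf of $C'$: the canonical framing on a null-homologous curve is the one coming from its bounding surface, so no extra twisting is introduced and $C'$ remains a clasper of the expected type; once the twisted leaf of $C_1$ has been disentangled, the remaining argument is formally identical to the untwisted case. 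Modulo these points, the lemma follows by the same pattern already established in Lemmas~\ref{lem:bandsum} and~\ref{lem:rightorder}.
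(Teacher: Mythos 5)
Your proposal hinges on a claim that does not hold: that the calculus move for a leaf band-sum produces a clasper $C'$ of order one higher than $C_1$. That mechanism is specific to \emph{edge} band-sums (Lemma~\ref{lem:bandsum}, via Figure~30 of \cite{Hab}), where attaching a new edge to an interior point of $e$ genuinely adds a trivalent vertex. For a \emph{leaf} band-sum, the relevant move is the zip construction: splitting the composite leaf $\ell\#\gamma$ yields $C_1$ together with a daughter clasper $C'$ of the \emph{same} tree type (with one leaf equal to $\gamma$ and the rest parallel to $C_1$'s leaves, edges winding through a neighborhood of $C_1$). So $C'$ has order $2n-1$, not $2n$, and your steps (3)--(4) do not go through. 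Notice also that your plan nowhere actually uses the hypothesis that $\gamma$ is null-homologous; if your argument were correct, the lemma would hold for any $\gamma$, which is false (changing a leaf's homology class changes the tree invariant, so the band-summed clasper cannot equal $C_1$ modulo $\sim_{2n}$). Your citation of Lemma~\ref{lem:bordism} is likewise too weak: that lemma requires a leaf bounding an embedded \emph{disk}, not merely a null-homologous curve.

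The idea you are missing is the grope-to-clasper construction, which is how the null-homologous hypothesis enters. Since $\gamma$ bounds an immersed surface $S$, the paper first contracts $S$ to its $1$-dimensional spine to make it embedded and disjoint from the rest of the clasper; the crossings of the leaf with itself and with the rest of the clasper introduced by this homotopy are $\sim_{2n}$-equivalences up to homology bordism by Lemmas~\ref{lem:crossing}, \ref{lem:claspercrossing}, \ref{lem:twistedcrossing}, \ref{lem:twistedclaspercrossing}. Only then is the zip applied, producing $C'$ of order $2n-1$ with a leaf bounding the now-embedded $S$; in the untwisted case $S$ is used (grope-to-clasper) to promote $C'$ to order $2n$, and in the twisted case, after disentangling the interlinked twisted leaves and pushing edges out of the twisted leaf of $C'$ (Lemmas~\ref{lem:crossing}, \ref{lem:claspercrossing}), $C'$ becomes twisted of order $2n+1$ and is negligible by Lemma~\ref{lem:rightorder}. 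Without this surface argument your proof has a genuine gap at the step you yourself flagged as the main obstacle.
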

\begin{proof}
The curve we are banding into is null-homologous and so it bounds an immersed surface $S$, which may intersect $C_2$. This surface has a $1$-dimensional spine, and by contracting $S$ toward the spine we may take it to be embedded and disjoint from  the rest of $C_2$. This contraction homotopy will introduce crossings of the part of the leaf of $C_2$ lying in $\partial S$ with the rest of $C_2$ and with itself. These are ${2n}$-equivalences up to homology bordism, by Lemmas~\ref{lem:crossing}, \ref{lem:claspercrossing}, \ref{lem:twistedcrossing} and \ref{lem:twistedclaspercrossing}. A zip construction splitting the leaf 
 yields the original clasper $C_1$ together with a clasper $C'$, contained in a neighborhood of $C_1$ except for one leaf that bounds the embedded surface $S$. If $C_1$ is of order $2n-1$ then $S$ can be used to turn $C'$  into a clasper of order $2n$
(using the grope-to-clasper construction). On the other hand if $C_1$ is twisted of order $2n-1$, the twisted leaves of $C_1$ and $C'$ will interlink, but can be separated by a ${2n}$-equivalence up to homology bordism by Lemma~\ref{lem:crossing}. Furthermore any edges of $C_1$ or $C'$ that link with the twisted leaf of $C'$ can be pushed out of that leaf by Lemmas~\ref{lem:crossing} and ~\ref{lem:claspercrossing}, so that $C'$ becomes twisted of order $2n+1$, so can be ignored as it represents a ${2n+1}$-equivalence up to homology bordism.
\end{proof}



\begin{lem}\label{lem:independent}
If $C$ and $C'$ are two (twisted) claspers of order $2n-1$ embedded in a homology cylinder $M$, such that $C$ and $C'$ have the same tree type,
then $M_C\sim_{2n}M_{C'}$ up to homology bordism.
\end{lem}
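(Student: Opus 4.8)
The plan is to show that any two embeddings of a clasper with a fixed tree type are joined by a finite chain of moves, each of which changes the surgered manifold by at most a $2n$-equivalence up to homology bordism, and then to invoke the crossing and band-sum lemmas above. This is the clasper analogue of the standard proof that Habiro's realization map is well defined, adapted so as to keep all the relevant equivalences at order $2n$.

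First I would normalize by an ambient isotopy of $M$ so that $C$ and $C'$ share the same underlying vertex-oriented trivalent tree sitting in a small ball $B_0$ in the interior of $M$ (the tree is contractible, hence isotopically standard), and so that in the twisted case the two distinguished $+1$-framed unknotted leaves, together with their disjoint bounding disks, coincide. All remaining data then lives in the framed embeddings of the leaf circles and of the framed edges joining them to $B_0$.

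Next I would reduce to the case that corresponding leaves, apart from the distinguished one in the twisted case, are \emph{homotopic} (as maps) in $M$. Since $C$ and $C'$ have the same tree type, corresponding leaves carry the same homology class, so they differ by an element of $[\pi_1 M,\pi_1 M]$; writing this as a product of commutators and realizing each commutator by band-summing the leaf with an embedded null-homologous simple closed curve pushed off near it, Lemma~\ref{lem:nullhomologous} shows that each such band-sum is a $2n$-equivalence up to homology bordism and preserves the tree type. An edge records no homological data, so I would likewise make corresponding edges homotopic rel endpoints by band-sums with simple closed curves, which is a $2n$-equivalence up to homology bordism by Lemma~\ref{lem:bandsum}. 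Now $C$ and $C'$ are homotopic as framed graphs in the interior of $M$, so a generic homotopy between them is a concatenation of ambient isotopies and finger moves (self-crossings of the clasper of type leaf--leaf, leaf--edge, or edge--edge). In the untwisted case, where $C$ has order $2n-1$, Lemmas~\ref{lem:claspercrossing} and~\ref{lem:edgeleafcrossing} bound each finger move by a $\sim_{2(2n-1)}$-equivalence up to homology bordism, and $2(2n-1)\geq 2n$ for $n\geq 1$; in the twisted case, where $C$ has tree-order $n$, Lemma~\ref{lem:twistedclaspercrossing} and the subsequent (twisted edge--leaf and edge--edge) crossing lemma give exactly $\sim_{2n}$. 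Concatenating all the moves yields $M_C\sim_{2n}M_{C'}$ up to homology bordism.

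The step I expect to be the main obstacle is this reduction to homotopic leaves: one must verify that the commutator discrepancy between two homologous leaves is genuinely realizable by band-sums with \emph{embedded} null-homologous simple closed curves, so that every intermediate object is again a clasper of the allowed type, and one must track the leaf framings throughout so that in the twisted case the distinguished leaf remains a $+1$-framed unknot while the other leaves retain the canonical framing determined by the homology cylinder structure. The finger-move steps are then routine clasper calculus already packaged in the lemmas above; note that the twisted case is the tight one, which is precisely why the twisted analogues of the crossing and band-sum lemmas were established.
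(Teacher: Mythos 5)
Your overall strategy — normalize the trees, adjust leaves and edges by band-sums into simple closed curves, then realize a remaining homotopy by crossings and invoke Lemmas~\ref{lem:crossing}--\ref{lem:nullhomologous} — is the same as the paper's, and the order bookkeeping ($2(2n-1)\geq 2n$ in the untwisted case, exactly $2n$ in the twisted case) is correct.

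However, there is a genuine gap: you never resolve the \emph{leaf framing} discrepancy in the untwisted case. Two order $2n-1$ claspers with the same tree type (tree type records only the homology classes of the leaves) may have leaf framings differing by arbitrary integers, and the steps you perform — band-sums into null-homologous curves, band-sums of edges, graph-level finger moves — bring the underlying unframed graphs into coincidence but do not reconcile the framings. Your sentence ``Now $C$ and $C'$ are homotopic as framed graphs'' does not follow from the preceding steps. The paper closes this gap explicitly: leaf self-crossings (a special case of Lemma~\ref{lem:claspercrossing}) change each leaf framing by multiples of $2$, normalizing it to $0$ or $+1$; in the latter case the zip construction splits off a $+1$-framed unknotted leaf, and the resulting daughter clasper is a \emph{twisted} clasper built on a tree with $2n-1$ internal vertices, hence by Lemma~\ref{lem:rightorder} it represents a $(4n-3)$-equivalence, which is $\geq 2n$ and can be discarded. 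This is a step your proof needs and does not supply.

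Relatedly, the place where you \emph{do} worry about framings — the twisted case — is where the issue is in fact absent: Definition~\ref{def:twisted-clasper} already requires the distinguished leaf to be a $+1$-framed unknot and all other leaves to carry the canonical framing, so two twisted claspers of the same tree type agree on framings a priori (as the paper notes), and the lemmas used (\ref{lem:bandsum}, \ref{lem:nullhomologous}, \ref{lem:twistedcrossing}, \ref{lem:twistedclaspercrossing}) are phrased so that the intermediate objects remain twisted claspers. So your concern is aimed at the wrong case; the real framing work is all in the untwisted case. Your other flagged concern — realizing the homological discrepancy between leaves by band-sums with \emph{embedded} null-homologous simple closed curves — is fine in a $3$--manifold by general position and does not need further justification, and the paper does not dwell on it either.
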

\begin{proof}
First consider the untwisted case.
Two such embeddings $C\subset M$ and $C'\subset M$ differ, up to framing, by a sequence of crossings, band-sums of edges into nontrivial curves and band-sums of leaves into null-homologous curves, all of which are ${2n}$-equivalences and homology bordisms by Lemmas~\ref{lem:claspercrossing}, \ref{lem:edgeleafcrossing}, \ref{lem:bandsum} and \ref{lem:nullhomologous}. By introducing crossings of a leaf with itself (Lemma~\ref{lem:claspercrossing}), one can change the framing by multiples of $2$. We can change the leaf framings in this way to be either $0$ or $+1$. In the latter case, 
by the zip construction, one can split the leaf into a $0$-framed leaf and a $+1$-framed unknot. Any clasper of (untwisted) order $2n-1$ with a $+1$-framed unknotted leaf represents a higher-order (${4n-3}$)-equivalence up to homology cobordism by Lemma~\ref{lem:rightorder} (which also holds if more than one leaf is twisted). Thus we may assume all leaves are canonically framed. 

In the twisted case, all framings on the leaves of $C$ and $C'$ are assumed to agree, and we can again move the two claspers into the same position by a sequence of crossings, band-sums of edges into nontrivial curves and band-sums of leaves into null-homologous curves, all of which are ${2n}$-equivalences up to homology bordism.
\end{proof}

\begin{defn}[\cite{CST1}]\label{def:reduced-tree-group}
The group $\widetilde{\mathcal T}_{2n-1}$ is defined as ${\mathcal T}_{2n-1}/\im\Delta_{2n-1}$, where the homomorphism $\Delta_{2n-1}\colon\z\otimes \mathcal{T}_{n-1}\to\mathcal{T}_{2n-1}$ is defined as follows.
Suppose $t$ is a generator of $\mathcal T_{n-1}$. For any univalent vertex $v$ of $t$, denote by $\ell(v)$ the label of that vertex, and denote by $T_v(t)$ the rooted tree obtained from $t$ by thinking of $v$ as the (unlabeled) root. With these notations 
$$\displaystyle\text{\normalsize $\Delta_{2n-1}(t)=$}\sum_v\,\text{\normalsize $\ell(v) $}-\!\!\!<^{T_v(t)}_{T_v(t)}.$$
\end{defn}

The following presentation of $\widetilde{\mathcal T}_{2n-1}$ will be used to complete the proof of Proposition~\ref{prop:twistedtheta}:
\begin{lem}\label{lem:reduced-Tau-groups-with-twisted-trees}
The group $\widetilde{\mathcal T}_{2n-1}$ is isomorphic to the abelian group generated 
by order $2n-1$ (non-$\iinfty$) trees and order $n$ $\iinfty$-trees, with the following relations:
\begin{enumerate}
\item  IHX and AS relations among order $2n-1$ trees.
\item IHX relations among $\iinfty$-trees.
\item Boundary twist relations: $ \text{\normalsize $h$}-\!\!\!<^\iinfty_J\,=\text{\normalsize $h$}-\!\!\!<^J_J$, where $h$ is a tree of order $0$.
\end{enumerate}
\end{lem}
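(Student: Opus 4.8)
The plan is to exhibit a presentation of $\widetilde{\cT}_{2n-1}$ with the claimed generators and relations by starting from the defining presentation $\widetilde{\cT}_{2n-1}=\cT_{2n-1}/\operatorname{im}\Delta_{2n-1}$ and performing a change of generators. First I would recall that $\cT_{2n-1}$ is presented by order $2n-1$ trees modulo IHX and AS. The key observation is that an $\iinfty$-tree of order $n$, written $J^\iinfty$ for a rooted order $n$ tree $J$ with one univalent vertex labeled by $\iinfty$, should be identified with the element $\tree{\ell(v)}{T_v(t)}{T_v(t)}$ that appears as a summand of $\Delta_{2n-1}$; more precisely, an $\iinfty$-tree with $\iinfty$ on a vertex adjacent (via a single edge and trivalent vertex) to a rooted subtree $K$ and a label-$h$ leaf corresponds to the order $2n-1$ symmetric tree $\tree{h}{K}{K}$, which is exactly what the boundary twist relation (3) demands. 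So I would introduce, for the purpose of the presentation, a formal generator for each $\iinfty$-tree and declare relation (3) to be its definition in terms of honest order $2n-1$ trees, then check that under this identification: (a) the IHX relations among $\iinfty$-trees (relation (2)) become consequences of the IHX and AS relations on the corresponding order $2n-1$ trees, since doubling/symmetrizing is compatible with IHX; and (b) the subgroup generated by the $\iinfty$-trees modulo IHX maps onto exactly $\operatorname{im}\Delta_{2n-1}$.

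Concretely, the steps are: (1) Define a homomorphism $\Phi$ from the abelian group $G$ presented by (1)-(3) to $\widetilde{\cT}_{2n-1}$ by sending order $2n-1$ trees to their classes and sending an $\iinfty$-tree $J^\iinfty$ to the class of the symmetric tree obtained by replacing the $\iinfty$-labeled leaf's neighborhood appropriately; check this respects all relations (1)-(3) — relations (1) are obvious, (2) follows because the symmetrization map intertwines IHX, and (3) holds by construction. (2) Construct the inverse: send a class in $\widetilde{\cT}_{2n-1}=\cT_{2n-1}/\operatorname{im}\Delta_{2n-1}$ of an order $2n-1$ tree to the corresponding generator in $G$; this is well-defined precisely because each generator $\Delta_{2n-1}(t)=\sum_v \tree{\ell(v)}{T_v(t)}{T_v(t)}$ is, by relation (3), equal in $G$ to $\sum_v (T_v(t))^\iinfty$, and this sum is zero in $G$ by the IHX relations (2) among $\iinfty$-trees — here one uses that summing $T_v(t)^\iinfty$ over all univalent vertices $v$ of a fixed tree $t$ is an IHX-type consequence, which is the $\iinfty$-analog of the standard fact (used already in defining $\eta_n$) that summing over all rootings of a tree is IHX-trivial. (3) Verify $\Phi$ and its candidate inverse compose to identities on generators, giving the isomorphism.

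The main obstacle I anticipate is step (2)'s well-definedness, i.e.\ showing $\sum_v T_v(t)^\iinfty = 0$ in the group presented by IHX relations among $\iinfty$-trees. This is the $\iinfty$-tree incarnation of the "sum over all roots" identity; the standard proof for ordinary trees proceeds by induction on order using a single IHX move to slide the root across an edge, and one must check that carrying the extra $\iinfty$ label along does not obstruct this — it does not, because the $\iinfty$ label sits at the root throughout and the IHX moves are performed on the internal edges away from it. One subtlety worth flagging: relation (3) is stated only for $h$ a label of order $0$ (a single univalent vertex), so I must make sure the identification in step (1) only ever invokes (3) in that form; this is automatic because in each summand of $\Delta_{2n-1}$ the vertex $v$ being rooted is univalent, hence its complementary "label" $\ell(v)$ is indeed an order $0$ tree. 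Finally, I would remark that the grading is respected throughout (order $2n-1$ trees and order $n$ $\iinfty$-trees both land in the degree $2n-1$ part under the convention that an $\iinfty$ leaf counts appropriately), so no degree mismatch arises.
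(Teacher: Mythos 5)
Your overall plan---present $\widetilde{\cT}_{2n-1}$ by adjoining formal $\iinfty$-tree generators, let relation (3) translate between the two kinds of generators, and build mutually inverse homomorphisms---matches the paper's strategy, which also constructs two explicit maps $\widetilde{\cT}_{2n-1}\leftrightarrow\overline{\cT}_{2n-1}$. However, there is a genuine gap in your definition of $\Phi\colon G\to\widetilde{\cT}_{2n-1}$. Relation (3) only tells you the image of an $\iinfty$-tree of the special form $\tree{h}{\iinfty}{J}$, where the $\iinfty$-labeled vertex shares a trivalent vertex with a single leaf $h$. A general order $n$ $\iinfty$-tree has the form $\tree{\iinfty}{J_1}{J_2}$ with both subtrees $J_1,J_2$ possibly of positive order, and for such a tree there is no single ``symmetric tree'' to map to; the phrase ``replacing the $\iinfty$-labeled leaf's neighborhood appropriately'' is not a definition. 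To follow your route you would first use IHX to write $\tree{\iinfty}{J_1}{J_2}$ as a linear combination of $\iinfty$-trees of the special form, then apply (3), and then prove independence of the choice of IHX reduction, none of which is carried out. The paper sidesteps this by giving one closed formula for the map $\overline{\cT}_{2n-1}\to\widetilde{\cT}_{2n-1}$ on all $\iinfty$-trees at once, namely sending $\tree{\iinfty}{J}{K}$ to $\sum_{v\in J}\tree{\ell(v)}{T_v(J-\!\!-K)}{T_v(J-\!\!-K)}$, the sum being over the univalent vertices $v$ lying in the subtree $J$; the apparent asymmetry between $J$ and $K$ is exactly a framing relation $\Delta(J-\!\!\!-K)$, hence vanishes in $\widetilde{\cT}_{2n-1}$, which is what makes the formula well-defined and is the key trick you are missing.

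Two smaller points about the other direction $\widetilde{\cT}_{2n-1}\to G$. First, a notational slip: under relation (3) each summand $\tree{\ell(v)}{T_v(t)}{T_v(t)}$ of the framing relator becomes the order $n$ $\iinfty$-tree $\tree{\ell(v)}{\iinfty}{T_v(t)}$ (a new trivalent vertex is inserted at the edge adjacent to $v$), not $(T_v(t))^\iinfty$, which would be an order $n-1$ $\iinfty$-tree in which the label $\ell(v)$ has been discarded. Second, the needed identity that $\sum_v \tree{\ell(v)}{\iinfty}{T_v(t)}=0$ in $G$ is asserted by the paper too, so I won't fault you for citing it, but your appeal to ``the standard fact that summing over all rootings is IHX-trivial'' should be examined: that fact lives in the quasi-Lie algebra, where antisymmetry is available, whereas the presentation here imposes no AS relation on $\iinfty$-trees, so the verification must genuinely combine the IHX relations for $\iinfty$-trees with relation (3) and the AS relations for order $2n-1$ trees. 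A proof by analogy alone does not close this step.
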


\begin{proof}
Let $\overline{\mathcal T}_{2n-1}$ be the group defined by the above presentation. Define a map $\widetilde{\cT}_{2n-1}\to\overline{\cT}_{2n-1}$ by sending each order $2n-1$ generator to itself.  Clearly this vanishes on IHX and AS relators, since these are also present in the target group. A framing relator $\Delta(t)$ is sent to the sum of attaching an $\iinfty$-labeled edge to the edge adjacent to every univalent vertex of $t$, using the boundary twist relations in the target group. This sum is zero as a consequence of IHX relations on $\iinfty$-trees, so the map is indeed well-defined. On the other hand, we define a map $\overline{\cT}_{2n-1}\to\widetilde{\cT}_{2n-1}$ back the other way as follows. Send each order $2n-1$ generator to itself, and send each $\iinfty$-tree,  { $ \iinfty-\!\!\!<^J_K$}, to the sum:
$$\displaystyle\sum_{v\in J}\,\text{\normalsize $\ell(v) $}-\!\!\!<^{T_v(J-\!-K)}_{T_v(J-\!-K)}$$
where we sum over all leaves of $J$ in the tree $J-\!\!\!-\,K$. This is well-defined, since doing this same construction with $K$ differs by $\Delta(J-\!\!\!-\,K)$ from doing it with $J$. This map clearly vanishes on IHX and AS relators for order $2n-1$ trees, as well as $2J^\iinfty$ relators. To see that it vanishes on IHX relators for $\iinfty$-trees, notice that these are generated by IHX relators on $\iinfty$-trees where the $\iinfty$ is adjacent to a leaf:
{ $ \tree{\iinfty}{I}{k}-\tree{\iinfty}{H}{k}+\tree{\iinfty}{X}{k}$}. This maps to $\sum_v \tree{k}{T_v(I-H+X)}{T_v(I-H+X)}$, which vanishes by IHX relations for non-$\iinfty$ trees.
The IHX relators involving the $\iinfty$-edge also map to $0$, as do the relators corresponding to the boundary twist relations. 
\end{proof}

Define the map $\widetilde\theta_{2n-1}\colon \widetilde{\mathcal T}_{2n-1}\to {\sf Y}_{2n-1}$ on generators by sending each tree to $(\Sigma_{g,1}\times I)_C$, where $C$ is any clasper realizing that tree type, both in the twisted and untwisted cases. 

\begin{lem}\label{prop:well-defined}
The homomorphism $\widetilde{\theta}_{2n-1}\colon \widetilde{\mathcal T}_{2n-1}\to{\sf Y}_{2n-1}$
is well-defined.
\end{lem}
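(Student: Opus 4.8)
The plan is to verify that $\widetilde{\theta}_{2n-1}$ is well-defined by checking, in turn, that it does not depend on the choice of clasper realizing a given tree type, and that it respects all the defining relations of $\widetilde{\cT}_{2n-1}$ in the presentation from Lemma~\ref{lem:reduced-Tau-groups-with-twisted-trees}. First, independence of the chosen embedding (for both twisted and untwisted claspers of order $2n-1$) is exactly the content of Lemma~\ref{lem:independent}: any two such claspers with the same tree type give homology cylinders that are $\sim_{2n}$ and homology bordant, hence represent the same element of ${\sf Y}_{2n-1}$. So the map is at least well-defined on the free abelian group generated by order $2n-1$ trees and order $n$ $\iinfty$-trees, and the only remaining issue is to show it kills the relators (i), (ii), (iii).

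Next I would treat the relators one at a time. The AS and IHX relations among order $2n-1$ trees (relator (i)) are realized geometrically by clasper surgery, by the geometric IHX/AS results of \cite{CST,CT2} (this is part of Habiro's original argument that $\theta_{2n-1}$ is well-defined, extended in this subsection). The IHX relations among $\iinfty$-trees (relator (ii)) are handled the same way: a twisted clasper of order $2n-1$ is in particular an order $n$ tree clasper with one distinguished leaf, and the geometric IHX move among the three relevant claspers can be performed keeping the $+1$-framed unknotted leaf fixed, so the three resulting homology cylinders sum to zero in ${\sf Y}_{2n-1}$. Finally, the boundary twist relation (relator (iii)), $\tree{h}{\iinfty}{J} = \tree{h}{J}{J}$ with $h$ of order $0$, is precisely the equality established inside the proof of Lemma~\ref{lem:rightorder}: the twisted-leaf-sliding move there converts surgery on the twisted clasper of type $\tree{h}{\iinfty}{J}$ into surgery on an order $2n-1$ clasper of type $\tree{h}{J}{J}$, up to $\sim_{2n}$ and homology bordism, so the two give the same class in ${\sf Y}_{2n-1}$.

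The main obstacle, and the step deserving the most care, is relator (iii): unlike the IHX relations, this equality is not a formal geometric move but genuinely uses the chain of reductions in Lemma~\ref{lem:rightorder} (inserting a Hopf link into an edge, sliding the twisted leaf to kill the twisting, a zip construction, recombining daughter claspers, and several applications of Lemmas~\ref{lem:crossing}, \ref{lem:edgeleafcrossing}). I would be careful to note that all of those moves are either diffeomorphisms, homology bordisms, or $\sim_{2n}$-equivalences, and that the hypothesis that $h$ has order $0$ is what makes the final clasper have order exactly $2n-1$ rather than higher, so that the two sides of (iii) land in the same graded piece ${\sf Y}_{2n-1}$ and their difference is trivial there. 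Once (i), (ii), (iii) are verified, the map descends from the presentation to a homomorphism $\widetilde{\theta}_{2n-1}\colon \widetilde{\cT}_{2n-1}\to {\sf Y}_{2n-1}$, which together with the (already established) surjectivity of $\theta_{2n-1}$ gives the surjectivity of $\widetilde{\theta}_{2n-1}$ asserted in Proposition~\ref{prop:twistedtheta}.
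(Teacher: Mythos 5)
Your proposal is correct and follows essentially the same route as the paper: well-definedness on generators comes from Lemma~\ref{lem:independent}, IHX relations (both ordinary and $\iinfty$-tree) are realized geometrically via clasper moves, AS is handled by the half-twist move, and the boundary-twist relation is exactly what is established inside the proof of Lemma~\ref{lem:rightorder}. One minor slip in your aside on relator (iii): if $h$ had positive order $k$, the tree $\tree{h}{J}{J}$ would have order $2n-1-k$, which is \emph{lower}, not higher, than $2n-1$; the constraint that $h$ has order $0$ is what keeps both sides in the same graded piece, but for the opposite reason you stated.
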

\begin{proof}
The definition of $\widetilde\theta_{2n-1}$ does not depend on the choices of embeddings of claspers by Lemma~\ref{lem:independent}. So it suffices to check that $\widetilde\theta_{2n-1}$ respects the relations in Lemma~\ref{lem:reduced-Tau-groups-with-twisted-trees}. The fact that $\widetilde\theta_{2n-1}$ vanishes on IHX relators follows because all IHX relations can be realized geometrically and the claspers can be pulled apart modulo order $2n$ clasper surgeries, as we mentioned in the proof of Lemma~\ref{lem:rightorder}. The boundary twist relations were also shown in that proof. Antisymmetry relations follow from Figure~45 of \cite{Hab} or Corollary~4.6 of \cite{GGP}.
 This implies that if $C$ is a clasper, then there is a clasper $\widetilde{C}$ in the complement of $C$ of the same tree type, except that a half-twist has been introduced into an edge, such that $M_{C\cup \widetilde{C}}\cong M$. These two framings correspond to opposite trees in $\widetilde{\mathcal T}_{2n-1}$. 

The boundary twist relation was proven explicitely in the argument for Lemma~\ref{lem:rightorder}.
\end{proof}

The proof of Proposition~\ref{prop:twistedtheta} is completed by the observation that the surjectivity of $\widetilde{\theta}_{2n-1}$ stated in Proposition~\ref{prop:twistedtheta}
follows since all relations are realized in target, and the original $\theta$-map was onto.


\subsection{Proof of Proposition~\ref{prop:ksurjective}}\label{subsec:proof-prop-ksurjective}
Recall that Proposition~\ref{prop:ksurjective} asserts that the homomorphism $\kappa_{2n-1}\colon{\sf K}^{\sf Y}_{2n-1}\to\z\otimes{\sf L}_{n+1}$ is surjective, where for $[M]\in{\sf K}^{\sf Y}_{2n-1}:=\Ker\{{\sf Y}_{2n-1}\to {\sf J}_{2n-1}\}$, $\kappa_{2n-1}([M])$ is defined by mapping $[M]\in{\sf J}_{2n}\cong{\sf D}_{2n}$ into $\z\otimes{\sf L}_{n+1}$ via the right-hand map in the top sequence of Theorem~\ref{thm:filtration}.
This latter map $s\ell_{2n}\colon {\sf D}_{2n}\to\z\otimes{\sf L}_{n+1}$ is defined by applying the snake lemma to the following diagram (see \cite[Defn.~5.6]{CST1}):
$$\xymatrix{
&&\mathbb Z_2\otimes\sL_{n+1}\ar@{>->}[d]^{sq}\\
\sD'_{2n}\ar@{>->}[d] \ar@{>->}[r]&\sL_1\otimes\sL'_{2n+1}\ar@{>->>}[d]^\cong\ar[r]&\sL'_{2n+2}\ar@{->>}[d]\\
\sD_{2n} \ar@{>->}[r]\ar@{-->>}[d]^{s\ell_{2n}}&\sL_1\otimes \sL_{2n+1}\ar@{->>}[r]&\sL_{2n+2}\\
\z\otimes \sL_{n+1}&&
}$$
The two horizontal sequences are exact by definition and the vertical sequence on the right is exact by Theorem 2.2 of \cite{L3}. The squaring map on the upper right is $sq(1\otimes X) := [X,X]$.

Let $J$ be any rooted tree of order $n$. Then $J$ represents an element in $\z\otimes \sL_{n+1}$, and $J^\iinfty$ represents a generator in $\widetilde{\cT}_{2n-1}$.
To prove Proposition~\ref{prop:ksurjective} we will first show that $\widetilde{\theta}_{2n-1}(J^\iinfty)\in{\sf K}^{\sf Y}_{2n-1}$, and then check that 
that $\kappa_{2n-1}(\widetilde{\theta}_{2n-1}(J^\iinfty))=J$. 

First we describe $\widetilde{\theta}_{2n-1}(J^\iinfty)$ geometrically. Consider an embedded twisted clasper $C\subset\Sigma_{g,1}\times I$ which has the tree type of $J^\iinfty$, and assume that all the non-twisted leaves are parallel copies of a symplectic basis $\{x_i,y_i\}_{i=1}^g$ for $\Sigma_{g,1}$, with the twisted leaf being a clean $+1$ framed unknot (so each univalent label on $J^\iinfty$ is an $x_j$ or $y_j$ except for the $\iinfty$-labeled vertex).

Then $\widetilde{\theta}_{2n-1}(J^\iinfty)=[(\Sigma_{g,1}\times I)_C]$, and
we want to geometrically understand the element of $\sD_{2n}$ determined by 
$(\Sigma_{g,1}\times I)_C$.

Recall from \cite{GL,L1,L2} that $\J_k$ is defined to be the kernel of the map $\HC(g,1)\to A_0(F/F_{k+1})$ induced by the composition $(\iota^-)^{-1}\circ\iota^+$, with $F$ the free group on $\{x_i,y_i\}_{i=1}^g$. There is a short exact sequence $0\to {\sf D}_k\to A_0(F/F_{k+2})\to A_0(F/F_{k+1})\to 0$, which implies that $\sJ_k\cong\sD_k$. To read off an element in ${\sf D}_k$ for a given homology cylinder representing an element in $\sJ_k$, consider the corresponding automorphism $f$ of $F/F_{k+2}$ which by hypothesis acts trivially on $F_{k+1}$. For each $x\in F/F_{k+2}$, we have $f(x)=x\psi(x)$, where $\psi(x)\in F_{k+1}/F_{k+2}\cong{\sf L}_{k+1}$. Via the isomorphism $\Hom(H,{\sf L}_{k+1})\cong H\otimes {\sf L}_{k+1}$ with $\{x_i,y_i\}_{i=1}^g$ as a basis for $H:= F/F_2$, $f$ is described by $\sum_i x_i\otimes \psi(y_i)- y_i\otimes\psi(x_i)$ which is an element of ${\sf D}_k$ (see the proof of Prop. 2.5 in \cite{GL}). 

We want to calculate this element in the case of $(\Sigma_{g,1}\times I)_C$. 
$f$ is calculated on $x_i$ by homotoping (a circle representing) $x_i$ from $\Sigma_{g,1}\times  \{0\}$ to $\Sigma_{g,1}\times  \{1\}$, and 
such a homotopy can be arranged so that $x_i$ intersects once with each leaf of $C$ which is labeled by the dual element $y_i$, and misses all other leaves of $C$. 
Each time $x_i$ pushes through a leaf $\ell_{y_i}$ of $C$ the element $\psi(x_i)$ picks up a meridian of the clasper, which by the Lemma~\ref{lem:commutator} and Lemma~\ref{lem:twistedcommutator} below corresponds to the iterated commutator determined by putting a root on the doubled tree $J-\!\!\!-\,J$ at the univalent vertex corresponding to $\ell_{y_i}$ in one of the $J$ sub-trees. So $\psi(x_i)$ is a product of such commutators (which we write as a sum of brackets in the abelian group $F_{2n+1}/F_{2n+2}\cong{\sf L}_{2n+1}$), with the terms of the product corresponding to the iterated commutators determined by putting roots at all the $y_i$-labeled vertices in one $J$-subtree of $J-\!\!\!-\,J$. A similar discussion holds for $\psi(y_i)$ except for a minus sign coming from 
the identification of $H$ with its dual $H^*$ via the symplectic form ($x_i\mapsto -y_i^*$ and
$y_i\mapsto x_i^*$).
The upshot is that the element $\sum_i x_i\otimes \psi(y_i)- y_i\otimes\psi(x_i)$ is equal to
$$
\sum_{v\in J} \ell (v) \otimes (J-\!\!\!-\,J)_v
$$
where the sum is over all univalent vertices $v$ in one $J$-subtree of $J-\!\!\!-\,J$, and $(J-\!\!\!-\,J)_v$ denotes
the element of $\sL_{2n+1}$ determined by putting a root at $v$. This sum is equal to $\frac{1}{2}\eta_{2n}(J-\!\!\!-\,J)$, where $\eta_{2n}\colon\cT_{2n}\to\sD_{2n}$ is the map in equation (1) of \cite{L2}. Since the coefficient of $\eta_{2n}(J-\!\!\!-\,J)$
is even and $\sD_{2n}$ is torsion-free, we conclude that $\sum_i x_i\otimes \psi(y_i)- y_i\otimes\psi(x_i)$ lies in $\sD_{2n}$.

So $[(\Sigma_{g,1}\times I)_C]$ maps to $\frac{1}{2}\eta(J-\!\!\!-\,J)\in{\sf D}_{2n}$, and in particular is an element of ${\sf K}^{\sf Y}_{2n-1}$.

Tracing through the application of the snake lemma in the definition of $s\ell_{2n}$ above shows that 
$s\ell_{2n}(\frac{1}{2}\eta(J-\!\!\!-\,J))=J$, so $\kappa_{2n-1}$ is surjective. 
$\hfill\Box$

\begin{rem}\label{rem:commutative-diagram} Note that the commutativity of the last diagram in subsection~\ref{subsec:proof-thm-improved-filtration} used in the proof of Theorem~\ref{thm:improved-filtration} comes from the fact that if $J\in\z\otimes \sL'_{2n+2}$ and $C$ is a clasper with this tree-type, then the element in $\sD_{4n+2}$ represented by $(\Sigma_{g,1}\times I)_C$ is $\frac{1}{2}\eta_{4n+2}(J-\!\!\!-\,J)$, which maps by $s\ell_{4n+2}$ to $J\in \z\otimes \sL_{2n+2}$ as required.
\end{rem}

Let $C$ be a tree clasper of order $n$ embedded in an oriented $3$--manifold $M$ with fundamental group $G:=\pi_1M$. Orient $C$ as a surface and orient the leaves consistent with this orientation. Consider a curve $\gamma$ which is an oriented positive meridian to a leaf of $C$, connected by a whisker to the basepoint. Connect the cores of each leaf of $C$ by whiskers to the basepoint, so that each leaf represents an element of $G$. Thinking of the leaf which $\gamma$ links as a root, the tree associated to $C$ determines an iterated commutator (bracketing expression) $c\in G_n$ of the group elements of the other leaves. For example, in Figure~\ref{fig:commutator}, $c=[\alpha_1,[\alpha_2,\alpha_3]]$.

\begin{lem}\label{lem:commutator}
$\gamma=(-1)^{n-1}c\in G_n/G_{n+1}$. 
\end{lem}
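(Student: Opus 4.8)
The plan is to analyze the meridian $\gamma$ by induction on the order $n$ of the tree clasper $C$, peeling off one leaf at a time using the zip construction and the recursive structure of a rooted binary tree. First I would handle the base case $n=1$: here $C$ is a $Y$-clasper (two non-root leaves), and surgery along it is, by definition, surgery along a six-component link built from a standard clasper model; computing the meridian of the root leaf in $\pi_1$ of the surgered manifold directly from Habiro's surgery picture (e.g.\ Figure~31 of \cite{Hab} together with the conventions of \cite{CT1}) shows that $\gamma$ equals the commutator $[\alpha_1,\alpha_2]$ of the two other leaf elements, in $G_1/G_2$, matching $(-1)^{1-1}c = c$. The sign convention $(-1)^{n-1}$ is pinned down by the orientation conventions on $C$ as a surface and on its leaves.

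Next, for the inductive step, write the rooted tree associated to $C$ (with root at the leaf $\gamma$ links) as $J = (J_1\, , J_2)$, a bracket of two rooted subtrees of orders $n_1$ and $n_2$ with $n_1+n_2 = n-1$. Geometrically the trivalent vertex adjacent to the root corresponds to a $Y$-piece of $C$ whose two other legs run off into sub-claspers $C_1$ and $C_2$ of orders $n_1$ and $n_2$ realizing $J_1$ and $J_2$. I would apply the zip construction / the standard clasper-splitting move of \cite{CT1,Hab} at this vertex to express the meridian $\gamma$ as the commutator $[\mu_1,\mu_2]$ (up to the base-case sign) of meridians $\mu_1,\mu_2$ of the two legs feeding into $C_1$ and $C_2$ respectively. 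Each $\mu_i$ is an oriented meridian to a leaf of the sub-clasper $C_i$ viewed with root at that leg, so by the inductive hypothesis $\mu_i = (-1)^{n_i-1} c_i \in G_{n_i}/G_{n_i+1}$, where $c_i$ is the iterated commutator that $J_i$ dictates on the leaf-group-elements of $C_i$. Combining, $\gamma = [\mu_1,\mu_2] \equiv (-1)^{n_1-1}(-1)^{n_2-1}[c_1,c_2] = (-1)^{n_1+n_2-2}c = (-1)^{n-1}c$ in $G_n/G_{n+1}$, using that the lower central series bracket $G_{n_1}/G_{n_1+1} \times G_{n_2}/G_{n_2+1} \to G_n/G_{n+1}$ is bilinear so that the signs and the passage to sub-claspers are compatible modulo $G_{n+1}$. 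One must check that the auxiliary strands and edges produced by the zip construction are homotopically trivial or lie in $G_{n+1}$, hence do not affect the computation modulo $G_{n+1}$; this is where Lemma~\ref{lem:crossing} (crossing changes of leaves/edges of a clasper raise the order of the change) does the bookkeeping.

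The main obstacle I anticipate is \emph{not} the inductive combinatorics but fixing all the sign and orientation conventions so that they are genuinely consistent: the orientation of $C$ as a surface, the induced orientations on the leaves, the orientation of the meridian $\gamma$, the cyclic vertex-orientation of the tree, and the left-versus-right nesting convention in the iterated commutator all interact, and a single mismatch changes $(-1)^{n-1}$ into $(-1)^n$ or swaps the commutator bracket order. I would therefore devote the bulk of the argument to carefully nailing down the $n=1$ case against the explicit surgery formula, and to verifying that the zip-construction step respects these orientations (invoking the antisymmetry and IHX realizations already used in the proof of Lemma~\ref{lem:rightorder}), so that the inductive step is purely formal. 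A secondary, minor point to dispatch is that the whisker choices (paths to the basepoint) only affect $\gamma$ and the $\alpha_j$ by conjugation, which is invisible in $G_n/G_{n+1}$ since that quotient is central in $G/G_{n+1}$; this makes the statement independent of those choices.
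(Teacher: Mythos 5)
Your proof is a valid inductive argument in spirit, but it differs from the paper's in two substantive ways, and it has a small gap worth flagging.

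\textbf{Different decomposition.} You split the tree at the root vertex into two rooted sub-trees $J_1, J_2$ and apply the inductive hypothesis to both legs. The paper instead peels off a terminal ``cherry'': it finds a trivalent vertex adjacent to two non-root leaves, breaks the clasper into an order-$1$ $Y$-piece (with leaves $\alpha,\beta$) Hopf-linked to a single smaller clasper $C'$ of order $n-1$, applies the base case to conclude that the new $C'$-leaf represents $[\alpha,\beta^{-1}]$, and then applies the inductive hypothesis \emph{once} to $C'$. The paper's decomposition avoids the degenerate case your argument must face: when one of $J_1,J_2$ is a single univalent vertex, $C_i$ has order $0$ and the formula $\mu_i = (-1)^{n_i-1}c_i$ needs a separate (and differently indexed) interpretation. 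You should address this explicitly, either by switching to a weight-of-commutator index so that a single leaf has ``$n_i=1$'', or by treating that degenerate case as a second base case.

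\textbf{Sign bookkeeping.} You track $(-1)^{n-1}$ directly through the recursion. The paper instead proves the sharper \emph{exact} identity $\gamma = \widetilde{c}$ (not merely mod $G_{n+1}$), where $\widetilde c$ is built from the tree by recursively replacing every bracket $[u,v]$ with $[u,v^{-1}]$, with whiskers chosen to run along the clasper body. The sign $(-1)^{n-1}$ then falls out a posteriori from $[u,v^{-1}]\equiv [u,v]^{-1}$ modulo higher commutators. This is a cleaner device because it removes the need to carry the sign through the induction at all, and makes the whisker-independence observation do real work. You correctly anticipate the whisker-independence point but treat it as a side remark; in the paper it is what permits the exact identity.

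\textbf{A caution on the base case sign.} You claim $\gamma = [\alpha_1,\alpha_2]$ for a $Y$-clasper, reading $(-1)^{n-1}$ literally with $n=1$. The paper's proof computes $\gamma = [\alpha_1,\alpha_2^{-1}] = -[\alpha_1,\alpha_2]$ for the $Y$-clasper (citing Figure~23 of \cite{CT1}). You should reconcile your orientation conventions against that figure before trusting your sign, especially since your root-splitting induction compounds base-case signs multiplicatively, so a wrong base case propagates.
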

\begin{proof}
This is proven by induction on $n$. Note that modulo higher order commutators, the choice of whiskers to the basepoint doesn't matter. So assume that the whiskers actually follow the clasper's body. Now form an iterated commutator $\widetilde{c}$ defined like $c$, except that instead of $[u,v]$, we form $[u,v^{-1}]$.
With these specific choices of whisker, we claim that $\gamma=\widetilde{c}$.
The base case is when $n=1$. In this case, it's not hard to see that $\gamma=[\alpha_1,\alpha_2^{-1}]=-[\alpha_1,\alpha_2]\in G/G_2$ as desired (see e.g.~Figure~23 of \cite{CT1}). Given a clasper of order $n$ break it into a clasper $C'$ of  order $n-1$ and a Y-clasper which links a leaf of $C'$, and the other two leaves are the group elements $\alpha$ and $\beta$. Then by the base case, the leaf linking $C'$ is the group element $[\alpha,\beta^{-1}]$, feeding this into the inductive computation of $\widetilde{c}'$, yields the desired $\widetilde{c}$.
\end{proof}

\begin{figure}
\begin{center}
\includegraphics[width=1.5in]{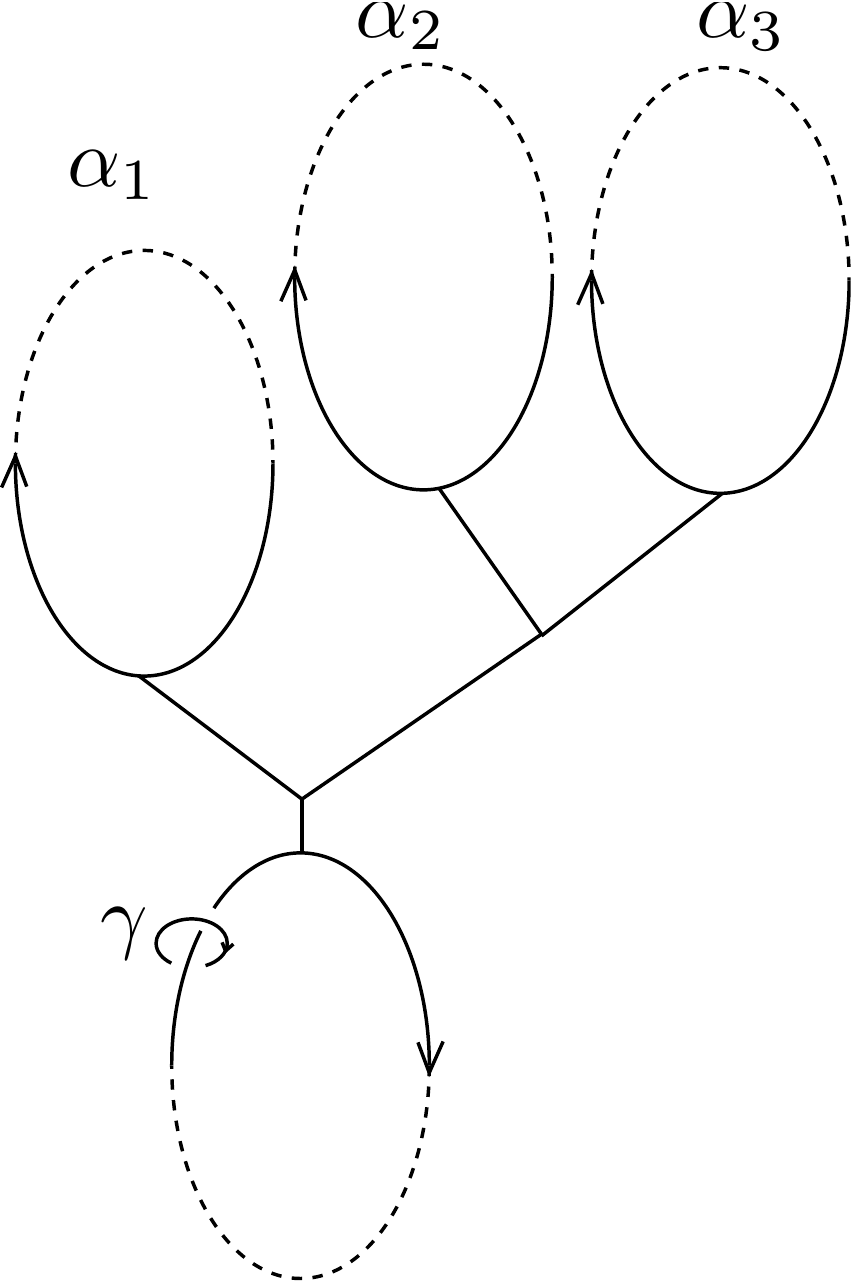}\hspace{1.5in} \includegraphics[width=1.5in]{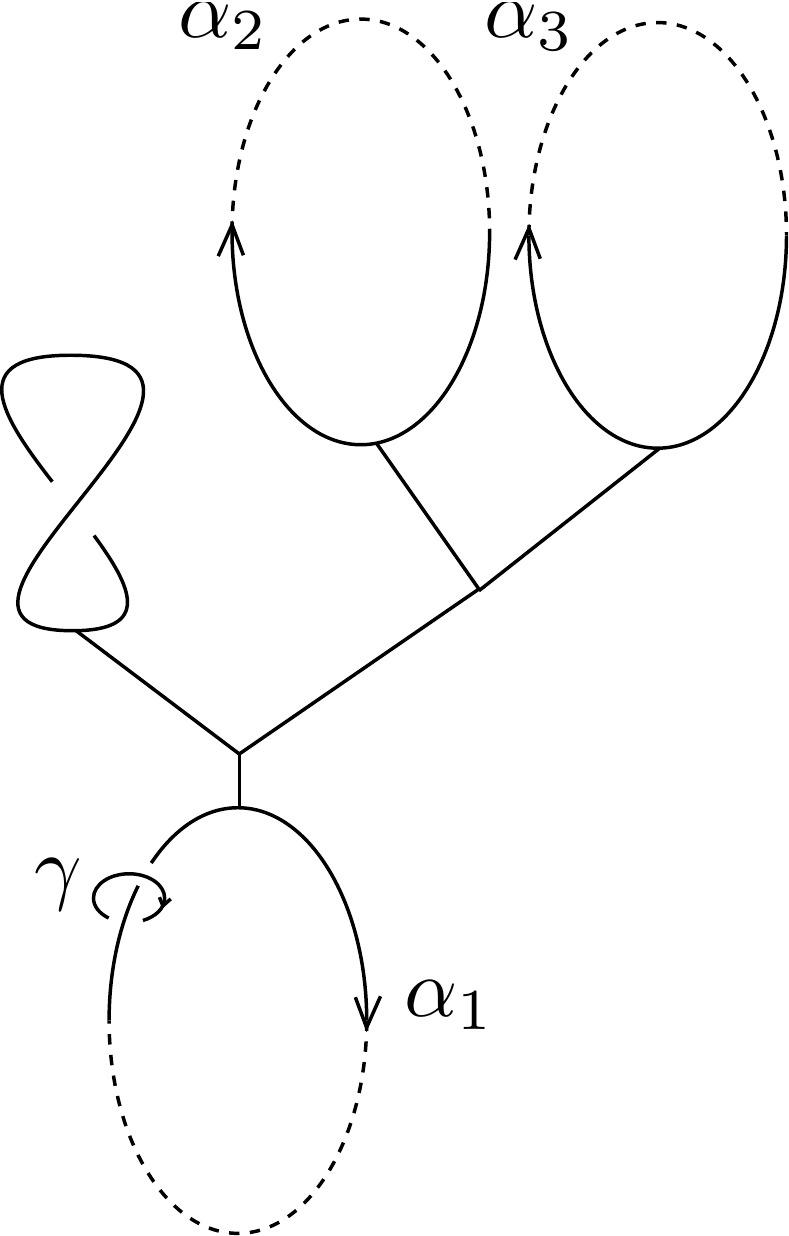}
\end{center}
\caption{Left: An order $2$ tree clasper in a $3$--manifold. According to Lemma~\ref{lem:commutator},  $\gamma=-[\alpha_1,[\alpha_2,\alpha_3]]\in G_3/G_4$. Right: An order $4$ twisted tree clasper. By Lemma~\ref{lem:twistedcommutator}, $\gamma=[[[\alpha_2,\alpha_3],\alpha_1],[\alpha_2,\alpha_3]]\in G_5/G_6$.
 }\label{fig:commutator}\end{figure}

Suppose in the setting of the previous lemma $C$ is a twisted clasper of order $n$, where $\gamma$ does not link the twisted leaf. Thinking of the $\gamma$-leaf as a root, the $\iinfty$-tree associated to $C$ determines a formal commutator $c_\iinfty$, where the twisted leaf corresponds to an $\iinfty$-symbol. Thinking of the $\iinfty$-vertex as the root, we get another iterated commutator of group elements $\omega$. Let $c$ be the commutator obtained by replacing the $\iinfty$-symbol in $c_\iinfty$ by $\omega$. For example, in Figure~\ref{fig:commutator}, $c_\iinfty=[\iinfty,[\alpha_2,\alpha_3]]$, $\omega=[[\alpha_2,\alpha_3],\alpha_1]$,and $c=[[[\alpha_2,\alpha_3],\alpha_1],[\alpha_2,\alpha_3]]$.

\begin{lem}\label{lem:twistedcommutator}
$\gamma=c\in G_{2n-1}/G_{2n}$.
\end{lem}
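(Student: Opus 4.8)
\textbf{Proof proposal for Lemma~\ref{lem:twistedcommutator}.}

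The plan is to reduce the twisted statement to the untwisted Lemma~\ref{lem:commutator} by making the twisting explicit geometrically. First I would recall the standard fact (used already in the proof of Lemma~\ref{lem:rightorder}, via the references to \cite{GGP}) that a $+1$-framed unknotted leaf of a clasper can be traded, without changing the diffeomorphism type of the surgered manifold, by a \emph{boundary-twist}-type maneuver: one slides the twisted leaf over a neighboring leaf, which has the effect of doubling the adjacent sub-tree. Concretely, if $C$ has the $\iinfty$-tree type $J^\iinfty$ with the $\iinfty$-vertex adjacent to (the root of) some rooted sub-tree, then surgery on $C$ agrees with surgery on an untwisted clasper whose underlying tree is obtained by replacing the $\iinfty$-labeled leaf with a parallel copy of the sub-tree rooted at that vertex. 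Translating into the commutator language of the lemma: the twisted leaf, which in $c_\iinfty$ contributes the symbol $\iinfty$, gets replaced by the group element $\omega$ obtained by re-rooting the tree at the $\iinfty$-vertex. This is exactly the passage from $c_\iinfty$ to $c$ in the statement.

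Next I would apply Lemma~\ref{lem:commutator} to the resulting untwisted clasper. Observe that if the original $\iinfty$-tree $J^\iinfty$ has order $n$ (i.e., $n$ trivalent vertices), then the untwisted clasper produced by the twisted-leaf-sliding construction has order $2n-1$: the sub-tree $\omega$ grafted in place of the $\iinfty$-leaf is itself a rooted tree of order $n-1$ (since re-rooting does not change the number of trivalent vertices, and the $\iinfty$-vertex was univalent), and gluing it onto the rest contributes one additional trivalent vertex at the old $\iinfty$-vertex, giving $n + (n-1) = 2n-1$ trivalent vertices total. Lemma~\ref{lem:commutator} then says $\gamma = (-1)^{2n-1-1}c = (-1)^{2n-2}c = c \in G_{2n-1}/G_{2n}$, which is precisely the claimed identity. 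One has to be a little careful that the re-rooting/grafting operation is compatible with the iterated-commutator bookkeeping, but this is bracket-rearrangement modulo $G_{2n}$, governed by the Jacobi/IHX identities in the associated graded Lie ring, so it does not affect the class of $c$.

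The main obstacle I expect is making the twisted-leaf-slide step precise at the level of based loops and the lower central series quotient, rather than merely at the level of surgery equivalence. In particular, one must verify (i) that the slide is genuinely a diffeomorphism of the surgered manifold, not just a $(>2n-1)$-equivalence — this is where the references to Lemma~4.9 and Theorem~3.1 of \cite{GGP} do the real work, the point being that the $+1$-framing is exactly what causes the twisting contribution to ``get killed'' and be replaced by the doubled sub-tree; and (ii) that under this diffeomorphism the meridian $\gamma$ of the untouched leaf, together with its whisker, is carried to the corresponding meridian of the new untwisted clasper with the whiskers following the clasper body as required by the proof of Lemma~\ref{lem:commutator}. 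Once these identifications are in place, the sign computation $(-1)^{2n-2}=1$ and the invocation of Lemma~\ref{lem:commutator} finish the argument; as in that lemma, the case of a self-crossing (here, the degenerate case where $\gamma$'s leaf and the twisted leaf interact) is handled by the same reasoning with minor modifications.
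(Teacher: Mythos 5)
Your approach is genuinely different from the paper's and, as written, does not work. The paper proves this lemma purely algebraically by applying Lemma~\ref{lem:commutator} \emph{twice}: once to $\gamma$, where the group element that the $\iinfty$-symbol contributes is the \emph{longitude} (framed push-off) of the twisted leaf, and once more to that longitude itself, which—because the twisted leaf is a clean $+1$-framed unknot—is homotopic to a meridian of the twisted leaf and therefore, by Lemma~\ref{lem:commutator}, represents $(\pm 1)\omega$. The two sign factors produced by the two applications of Lemma~\ref{lem:commutator} cancel, giving $\gamma = c$ directly with no geometric surgery on the clasper at all.

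There are three concrete problems with your route. First, the order count is off: if $J^\iinfty$ has $n$ trivalent vertices, then so does $\omega$'s tree (re-rooting does not delete a trivalent vertex, and the $\iinfty$-vertex is univalent), and grafting a rooted tree into the $\iinfty$-leaf does \emph{not} create a new trivalent vertex—the two univalent vertices merge into a degree-$2$ point which is erased into an edge. The grafted tree of $c$ therefore has $2n$, not $2n-1$, trivalent vertices; you can see this in the paper's example $c=[[[\alpha_2,\alpha_3],\alpha_1],[\alpha_2,\alpha_3]]$, which has four brackets while $c_\iinfty=[\iinfty,[\alpha_2,\alpha_3]]$ has two. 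Second, with the corrected order $2n$ a single application of Lemma~\ref{lem:commutator} gives $(-1)^{2n-1}=-1$, so you would conclude $\gamma=-c$ rather than $\gamma=c$; the cancellation of signs in the paper's argument comes precisely from applying Lemma~\ref{lem:commutator} twice, and a one-shot application does not reproduce it. Third, and most seriously, the ``boundary-twist slide'' is \emph{not} a diffeomorphism taking the twisted clasper to a clean untwisted clasper of tree-type $c$. In the paper's own treatment (Lemma~\ref{lem:rightorder}) the reduction passes through an IHX expansion, a zip construction, and edge/leaf finger moves which are only controlled up to $2n$-equivalence and homology bordism; these corrections land exactly in the lower-central-series stratum where $c$ lives, so $\gamma$ is not automatically preserved through them. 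Moreover, the untwisted clasper that actually appears at the end of Lemma~\ref{lem:rightorder} has tree-type $h{-\!\!\!<}^J_J$ (with $\gamma$ a meridian of the $h$-leaf), for which the naive Lemma~\ref{lem:commutator} reading gives $[\omega_J,\omega_J]$, which vanishes in the free Lie ring—so the information encoding $c$ is hidden in the higher-order corrections rather than in the visible tree-type, and your argument would have to track those. The paper's double application of Lemma~\ref{lem:commutator} neatly sidesteps all of this.
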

\begin{proof}
The $\iinfty$-symbol in the commutator $c_\iinfty$ geometrically corresponds to a longitude of the twisted leaf, which is homotopic to a meridian, so by the previous lemma, represents the commutator $\omega$. Furthermore, there are two factors of $(-1)^n$ which cancel each other out. 
\end{proof}


\section{On a map from string links to homology cylinders}\label{sec:map}
In this section we use a grope-to-Whitney tower construction from \cite{CST} together with algebraic results from \cite{CST1} and Levine's Theorem~\ref{thm:Levine} mentioned earlier to describe the relationship between the graded groups associated to the Whitney tower filtration $\mathbb {SW}_n$ and the $Y$-filtration $\mathbb Y_n$ via a geometric map from string links to homology cylinders. As we mentioned in the introduction, Habegger defined a bijection between the sets $\HC(0,2g+1)$ and $\HC(g,1)$, where $\HC(0,2g+1)$ can be identified with the set of $(2g+1)$-string links in a homology ball \cite{H}.
Habegger used this bijection to equate the Milnor and Johnson invariants.
By Theorem~\ref{thm:improved-filtration} above and the Milnor invariant-Whitney tower relationship (e.g.~\cite{CST0}), the results of this section can be thought of as a geometric analogue of the Milnor-Johnson correspondence described in \cite{H}.

\begin{figure}
\begin{center}
\begin{eqnarray*}
\begin{minipage}{.6\linewidth}
\includegraphics[width=\linewidth]{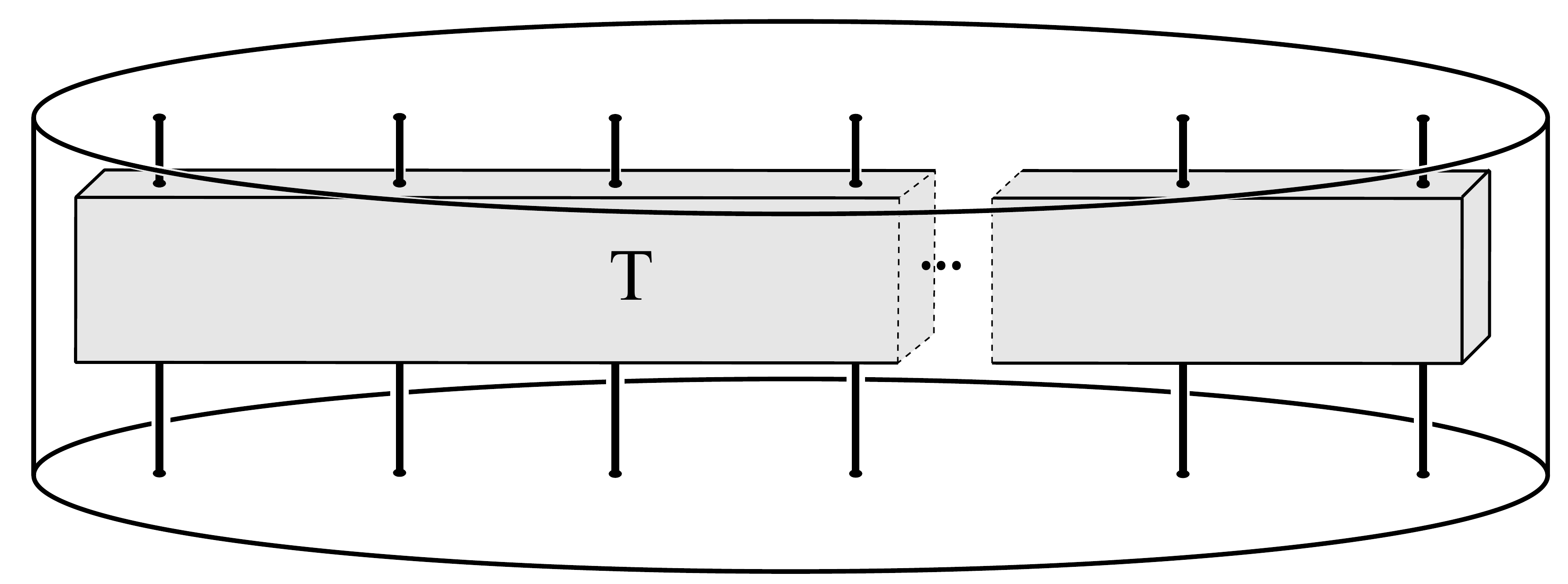} 
\end{minipage}
\overset{H}{\longrightarrow} \text{\hspace{5em}} \\
\begin{minipage}{.6\linewidth}
\includegraphics[width=\linewidth]{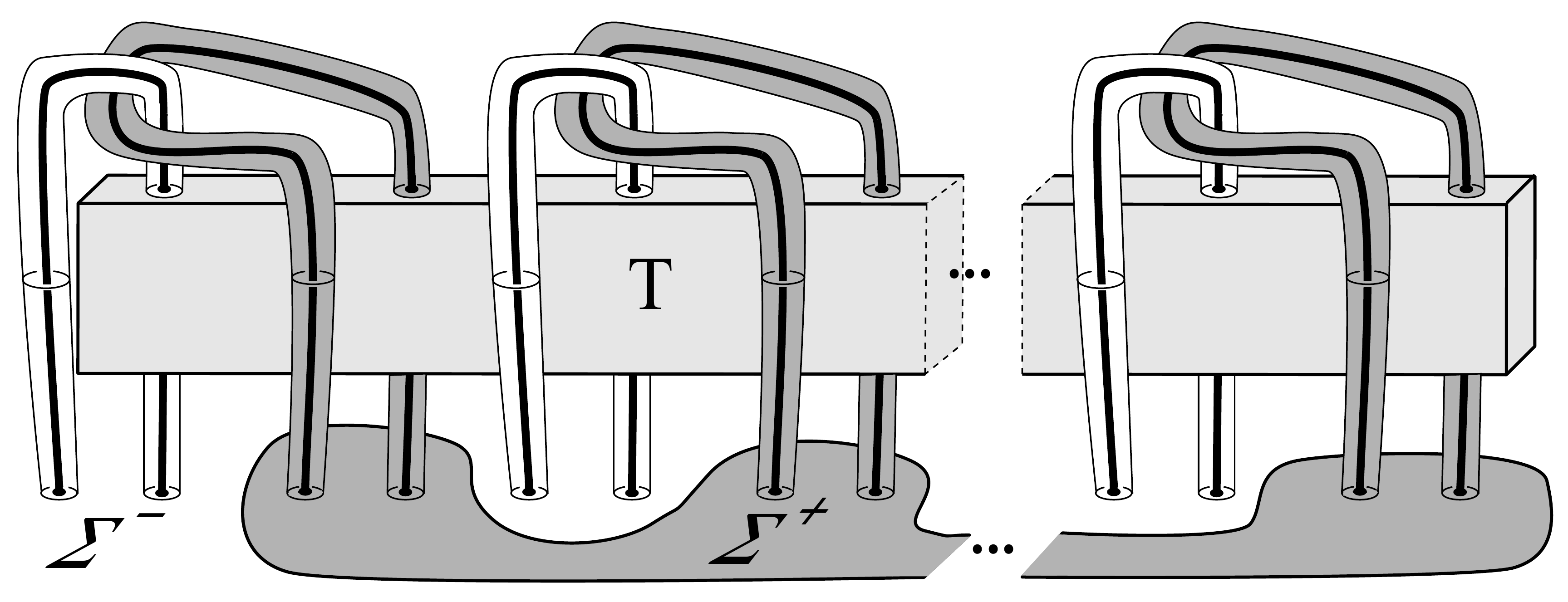}
\end{minipage}
\end{eqnarray*}
\caption{Closing a string link (defined by a tangle $T$) to a homology cylinder.}\label{fig:stringclosure}
\end{center}
\end{figure}

Figure~\ref{fig:stringclosure} illustrates a construction which sends a $2g$ component string link with trivial linking numbers to a homology cylinder over $\Sigma_{g,1}$: The arcs in the right-hand side have endpoints on a piece of a plane, which is the bottom of a topological ball containing the tangle $T$ which defines the string link pictured on the left. We one-point compactify the bottom plane to $S^2$ and think of everything as taking place in a 3--ball $B^3$ bounded by that 2--sphere. Remove from $B^3$ a regular neighborhood of $T$ in the right-hand picture to get a $3$--manifold with boundary that decomposes into two surfaces $\Sigma^{\pm}$ as pictured. Since the pairwise linking numbers of the original string link are zero, the result is a homology cylinder over $\Sigma_{g,1}$. Moreover, the trivial string link gets sent to the identity product $\Sigma_{g,1}\times I$. This construction is not multiplicative on the monoids of string links and homology cylinders, but does give a well-defined map 
\[
\mathrm{H}\colon\mathbb{SL}_1(2g)\to\HC_1(g,1)
\] 
We recall that $\mathbb{SL}_1(2g)$ is the set of concordance classes of string links  with $2g$ strands and trivial linking matrix, and $\HC_1(g,1)$ are homology bordism classes of homology cylinders inducing the identity map on first homology $H_1(\Sigma_{g,1})$. The map $\mathrm{H}$ takes  concordant string links to homology cobordant homology cylinders, and is therefore well-defined.

\begin{figure}
\begin{center}
\includegraphics[width=2.5in]{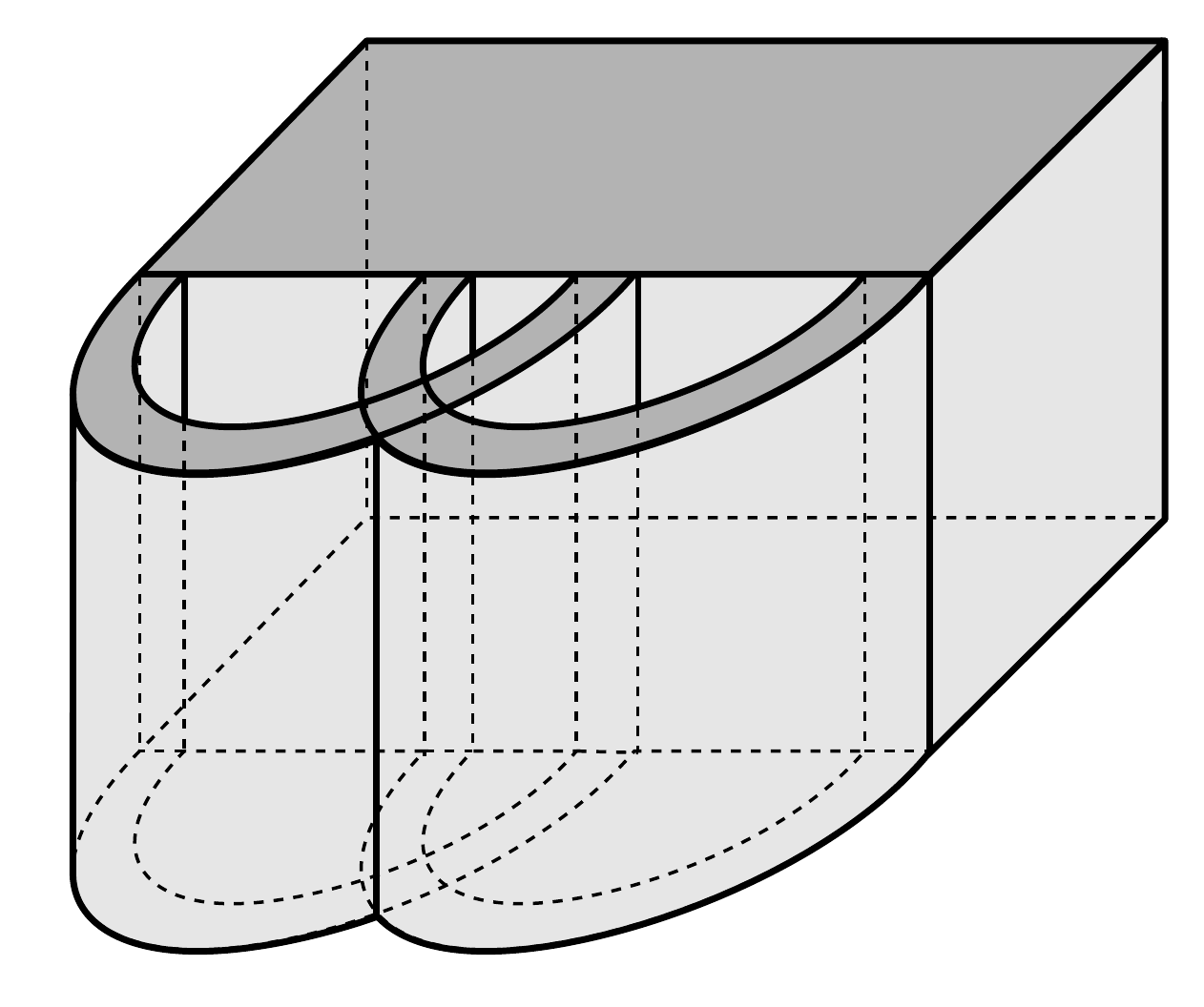}\hspace{.2in}\includegraphics[width=2.5in]{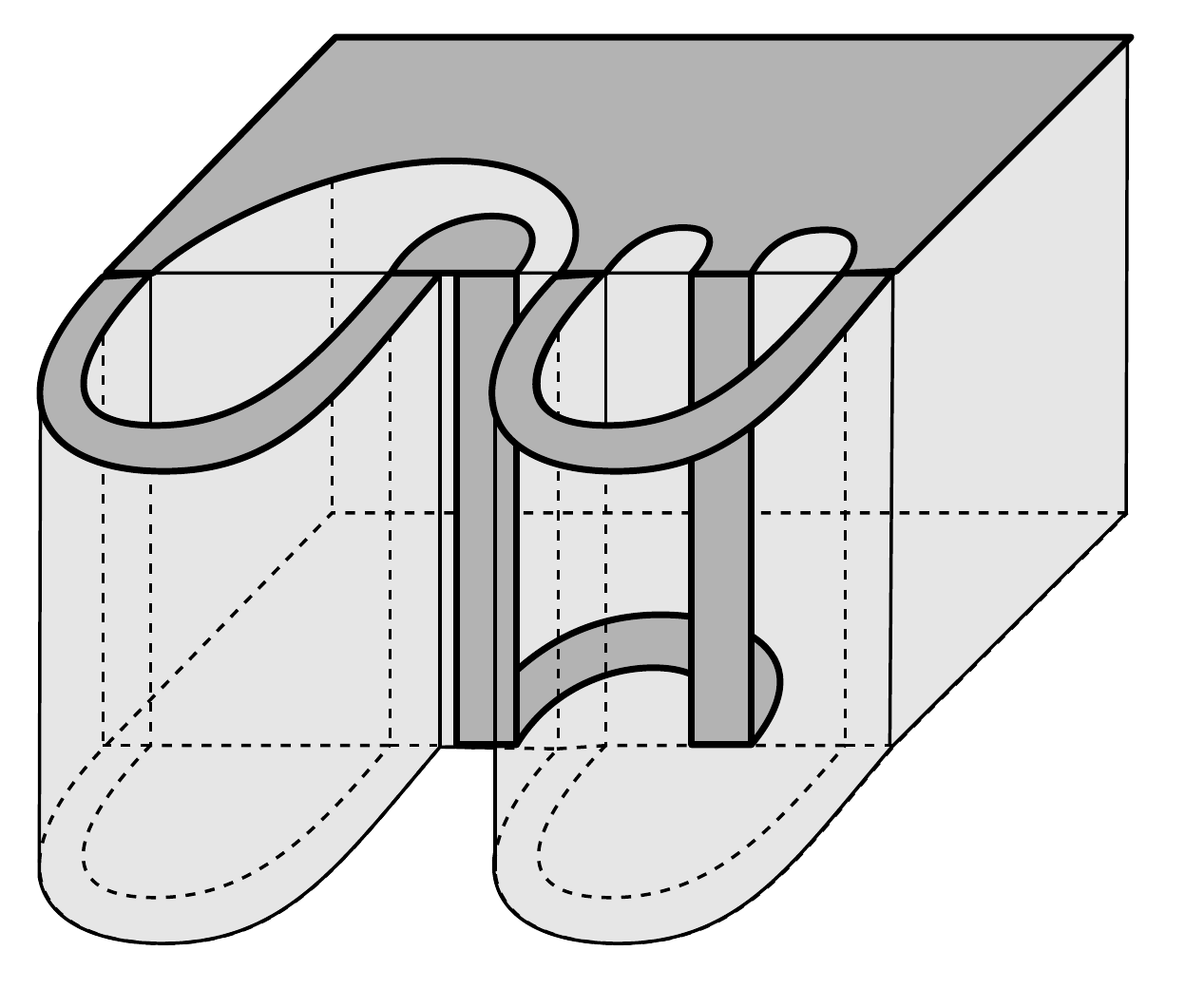}\\
\caption{A diffeomorphism from $\Sigma_{1,1}\times I$ to $\Sigma_{0,3}\times I$.}\label{fig:handleslide}
\end{center}
\end{figure}

\begin{prop}
The map $\mathrm{H}\colon \mathbb{SL}_1\to \HC_1(g,1)$ is the composition of the homomorphism $C\colon \mathbb{SL}_1\to \HC_1(0,2g+1)$ with the Habegger bijection.
\end{prop}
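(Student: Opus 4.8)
The plan is to compare the two maps directly on the level of the three-manifold constructions that define them, by inserting Habegger's diffeomorphism $\Sigma_{g,1}\times I \cong \Sigma_{0,2g+1}\times I$ (illustrated for genus one in Figure~\ref{fig:handleslide}) into the picture defining $\mathrm{H}$ (Figure~\ref{fig:stringclosure}). The key point is that both $\mathrm{H}$ and $C$ are built by removing a neighborhood of the tangle $T$ from a fixed three-ball and reading off the resulting homology cylinder relative to a chosen boundary parametrization; the only difference is the choice of parametrization of the boundary surface (a once-punctured genus-$g$ surface for $\mathrm{H}$, versus a $(2g+1)$-punctured sphere for $C$), together with the bookkeeping of where the string-link strands are ``closed up.'' So the statement reduces to verifying that Habegger's handle-slide bijection carries one parametrization to the other.

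First I would set up the construction of $C$ explicitly in the same format as $\mathrm{H}$: given a string link $\sigma$ with $2g$ strands and trivial linking matrix, $C(\sigma)$ is the complement of $\sigma$ in $D^2\times I$, regarded as a homology cylinder over $\Sigma_{0,2g+1}$ with the two boundary surfaces $\Sigma^\pm_{0,2g+1}$ being the top and bottom disks-with-$2g$-holes. Then I would describe $\mathrm{H}(\sigma)$ concretely as in Figure~\ref{fig:stringclosure}: one-point-compactify the bottom plane, take the complement of $T$ in $B^3$, and observe that the resulting boundary decomposes as $\Sigma^+_{g,1}\cup\Sigma^-_{g,1}$ where the genus comes precisely from the $g$ pairs of arcs used to close the $2g$ strands in pairs. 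The next step is to exhibit an ambient diffeomorphism of three-manifolds-with-corners carrying the $\mathrm{H}$-picture to the $C$-picture, sending $\Sigma^\pm_{g,1}$ to $\Sigma^\pm_{0,2g+1}$ and matching the two parametrizations $m_\pm$; this is exactly Habegger's move $\Sigma_{g,1}\times I\cong \Sigma_{0,2g+1}\times I$ from \cite{H}, performed on the boundary and extended into the complement of $T$ (which is unaffected, since the move only reorganizes the closing arcs). Finally I would check that this diffeomorphism is compatible with the homology-cobordism equivalence relation and with the trivial-product base point (the unlink goes to $\Sigma_{g,1}\times I$ under $\mathrm{H}$ and to $\Sigma_{0,2g+1}\times I$ under $C$, and Habegger's bijection matches these), so it descends to the asserted identity $\mathrm{H}=(\text{Habegger})\circ C$ on $\mathbb{SL}_1(2g)$.

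The main obstacle I anticipate is \emph{pinning down the boundary parametrizations precisely enough} that the identification is a genuine equality of maps and not merely an abstract isomorphism of homology cylinders: one must track exactly how the $2g$ punctures/holes of $\Sigma_{0,2g+1}$ are grouped into the $g$ handles of $\Sigma_{g,1}$ by the closing arcs, make sure this grouping is the one Habegger's handle-slide actually realizes, and confirm that the symplectic basis $\{x_i,y_i\}$ used to define the Johnson filtration on $\HC(g,1)$ corresponds under this identification to the meridian basis used for $\HC(0,2g+1)$. Once the parametrizations are matched on the nose, everything else is formal: the complement of $T$ plays no role in the move, concordance of string links corresponds to homology cobordism of complements on both sides, and the base points agree. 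In particular, it then follows immediately (as asserted in the surrounding text) that $\mathrm{H}_n = (\text{Habegger})\circ C_n$ on associated graded groups, so the conclusions about $C_n$ from \cite{HL2,GL} and Theorem~\ref{thm:improved-filtration} transfer verbatim to $\mathrm{H}_n$.
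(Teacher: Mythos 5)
Your plan is essentially the paper's own argument: both prove the identity by exhibiting Habegger's handle-slide diffeomorphism $\Sigma_{g,1}\times I \cong \Sigma_{0,2g+1}\times I$ directly inside the pictures defining $\mathrm{H}$ and $C$, tracking the distinguished boundary surface $\Sigma^+$ and noting that the move takes place away from where the tangle $T$ is plugged in. The paper carries this out through a sequence of explicit figures (its Figures 6--8), whereas you describe the same steps in prose, but the content and structure of the argument coincide.
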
 
\begin{proof}
Habegger's bijection is based on the fact that $\Sigma_{g,1}\times I$ is diffeomorphic to $\Sigma_{0,2g+1}\times I$. Refer to Figure~\ref{fig:handleslide}, which shows a copy of $\Sigma_{1,1}\times I$ on the left, immersed into $\R^3$. The special surface $\Sigma^+=\Sigma_{1,1}\times 1$ is shaded more darkly. To get a diffeomorphism to $\Sigma_{0,3}\times I$, take the left-most attaching rectangle of the right-hand handle, slide it up onto the top of the block, move it to the right and then back down onto the side of the block facing the viewer. The surface $\Sigma^+$ is moved to the position indicated on the right in Figure~\ref{fig:handleslide}. Habegger maps string links in homology balls to homology cylinders by reparameterizing the boundary as indicated.

To make this more explicit, we redraw $\Sigma_{0,3}\times I$ as in Figure~\ref{fig:Sigma-plus}(A), keeping track of the surface $\Sigma^+$, which we isotope to a more convenient position in Figure~\ref{fig:Sigma-plus}(B), (C) and (D). Now Habegger's construction, applied to a string link, can be seen as plugging in an arbitrary string link $T$ (with trivial linking matrix) into the trivial string link represented in Figure~\ref{fig:Sigma-plus}(D). This is depicted in Figure~\ref{fig:Habbeger-map}(A). Finally, an isotopy of the surface $\Sigma^+$ leads us to the picture in Figure~\ref{fig:Habbeger-map}(B). Comparing this with Figure~\ref{fig:stringclosure}, it is clear that the map $\mathrm{H}$ coincides with Habegger's construction. 
\end{proof}

\begin{figure}

\subfloat[]{\includegraphics[width=.24\linewidth]{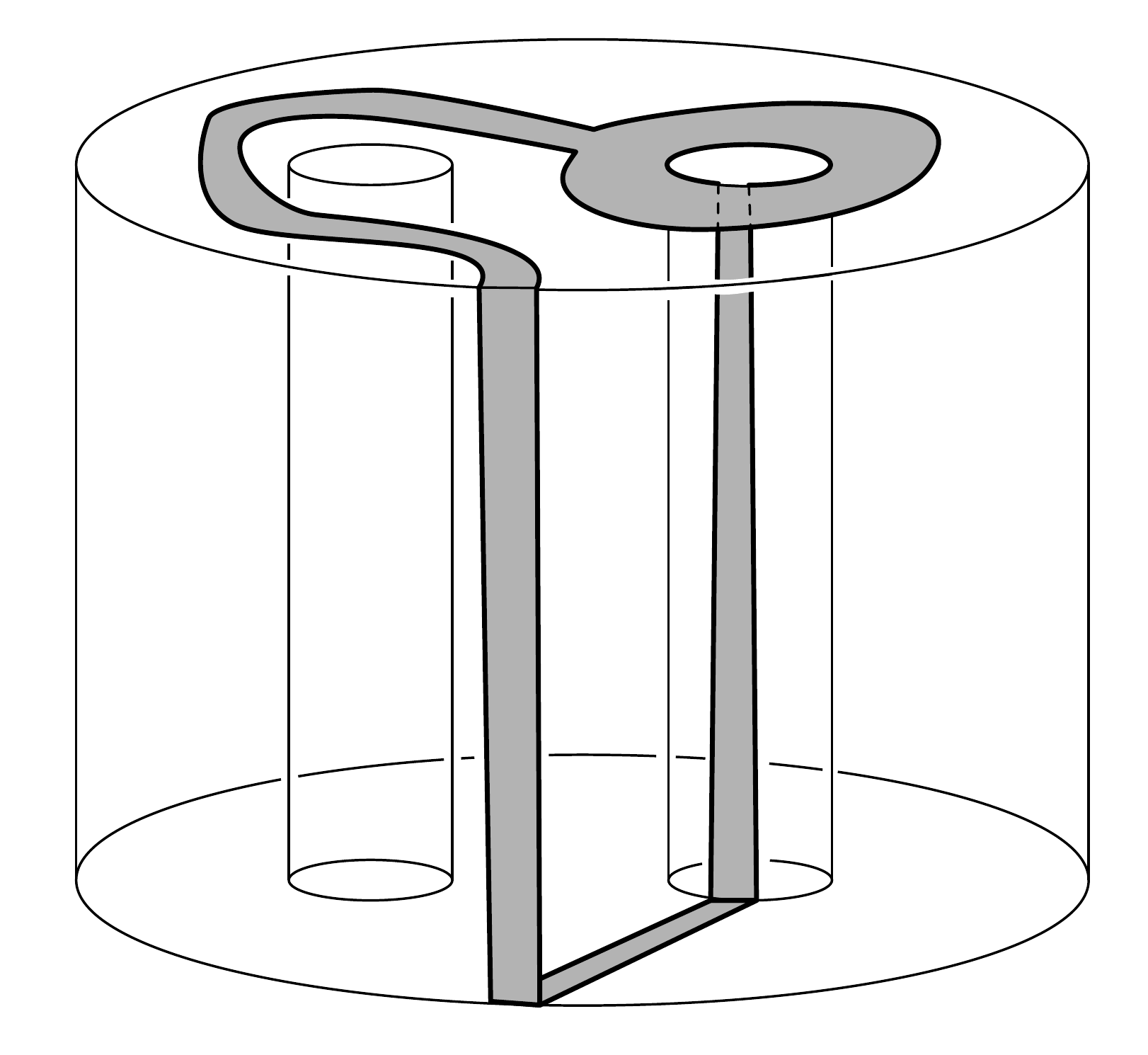}}
\subfloat[]{ \includegraphics[width=.24\linewidth]{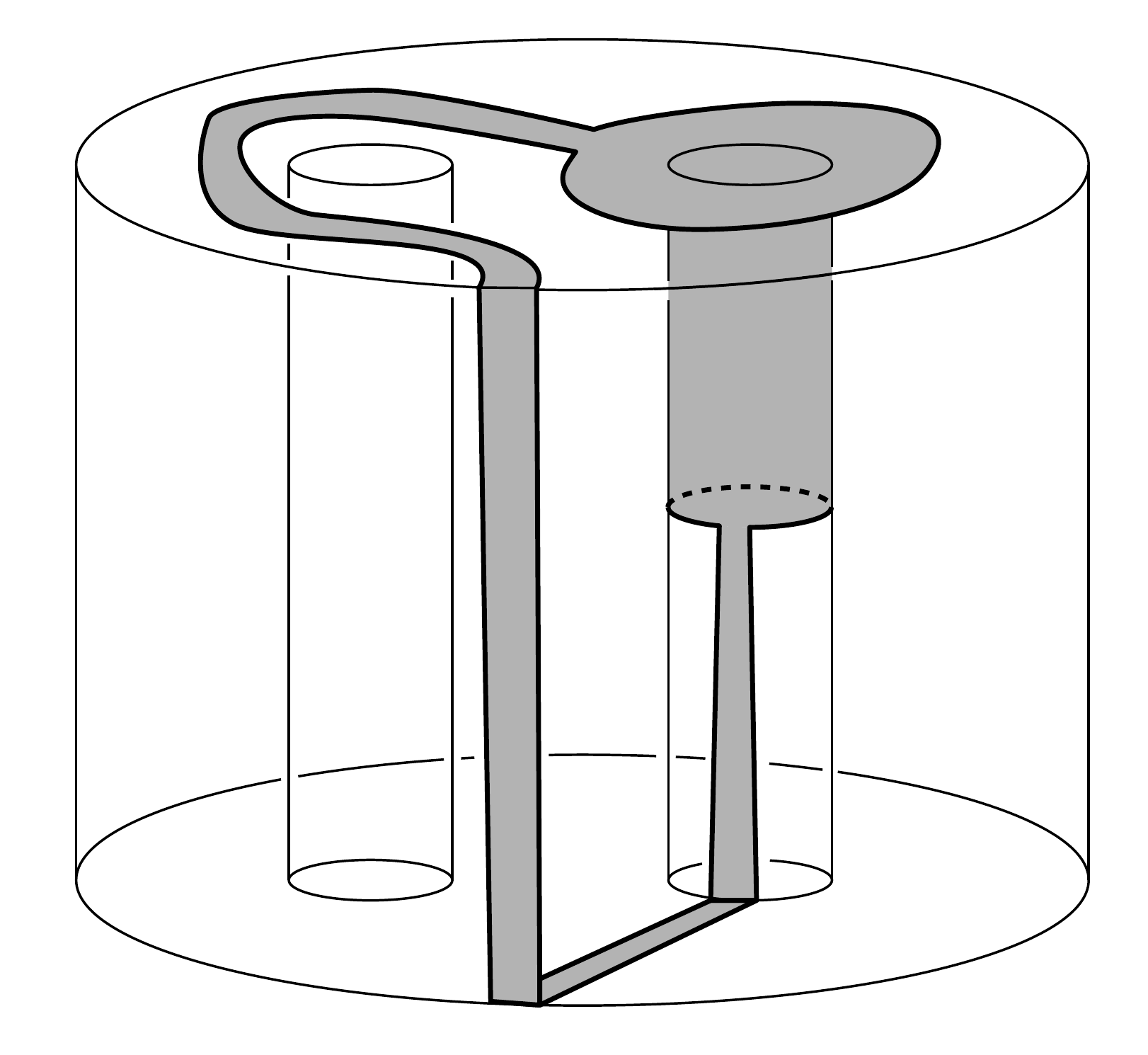}}
\subfloat[]{ \includegraphics[width=.24\linewidth]{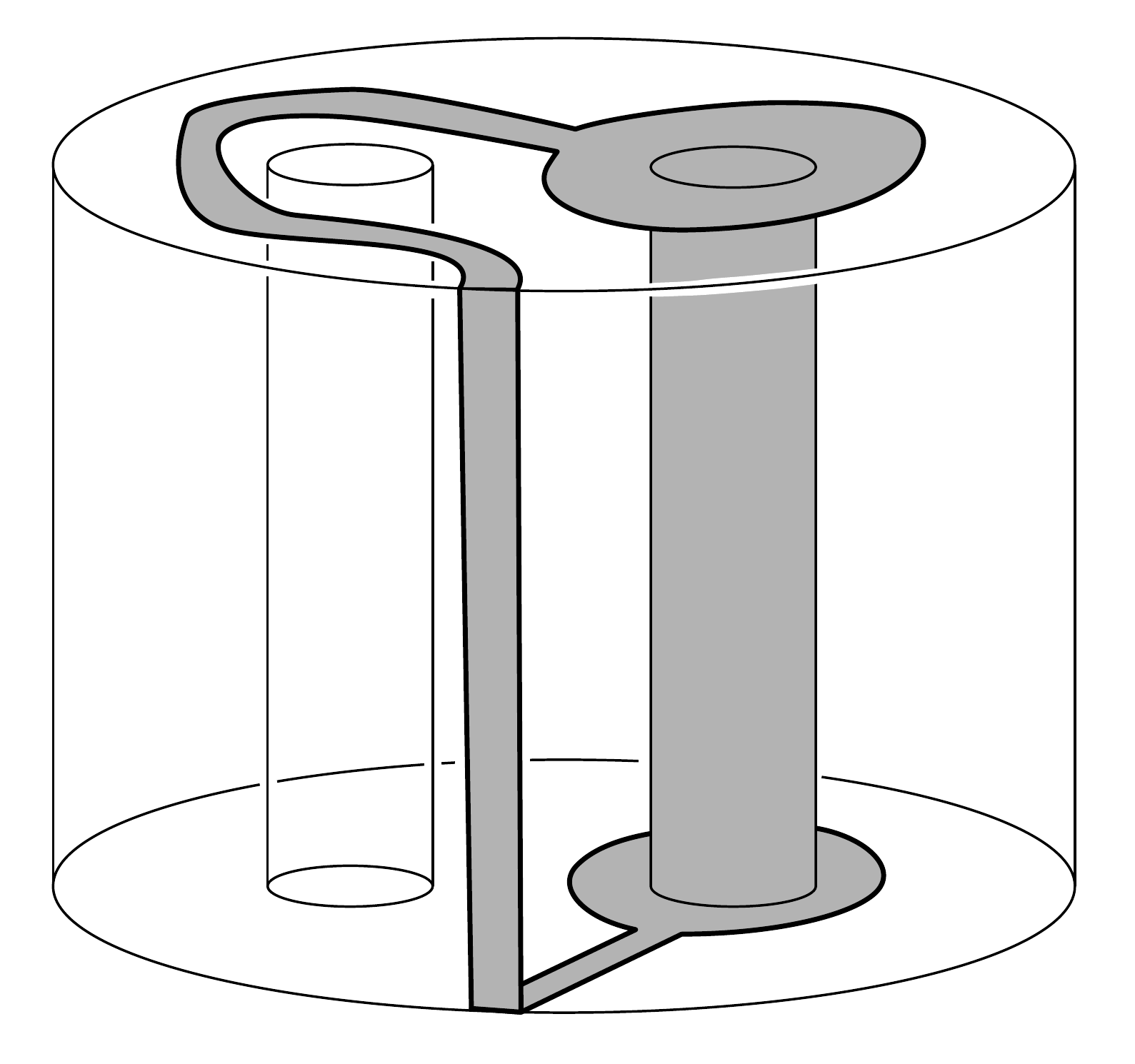}}
\subfloat[]{ \includegraphics[width=.24\linewidth]{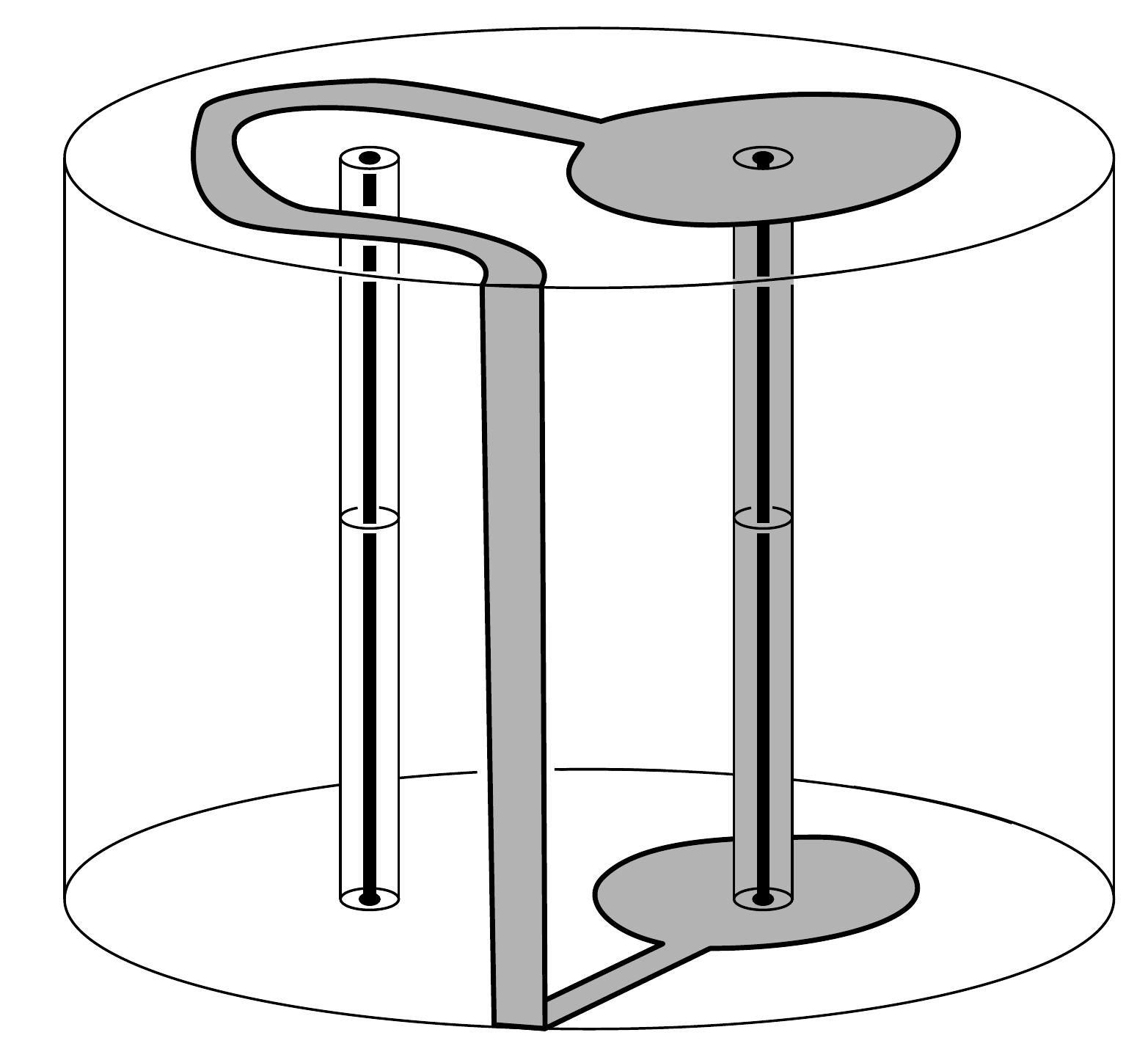}}
\caption{Isotoping $\Sigma^+$.}\label{fig:Sigma-plus}
\end{figure}
\begin{figure}
\subfloat[]{\includegraphics[width=2in]{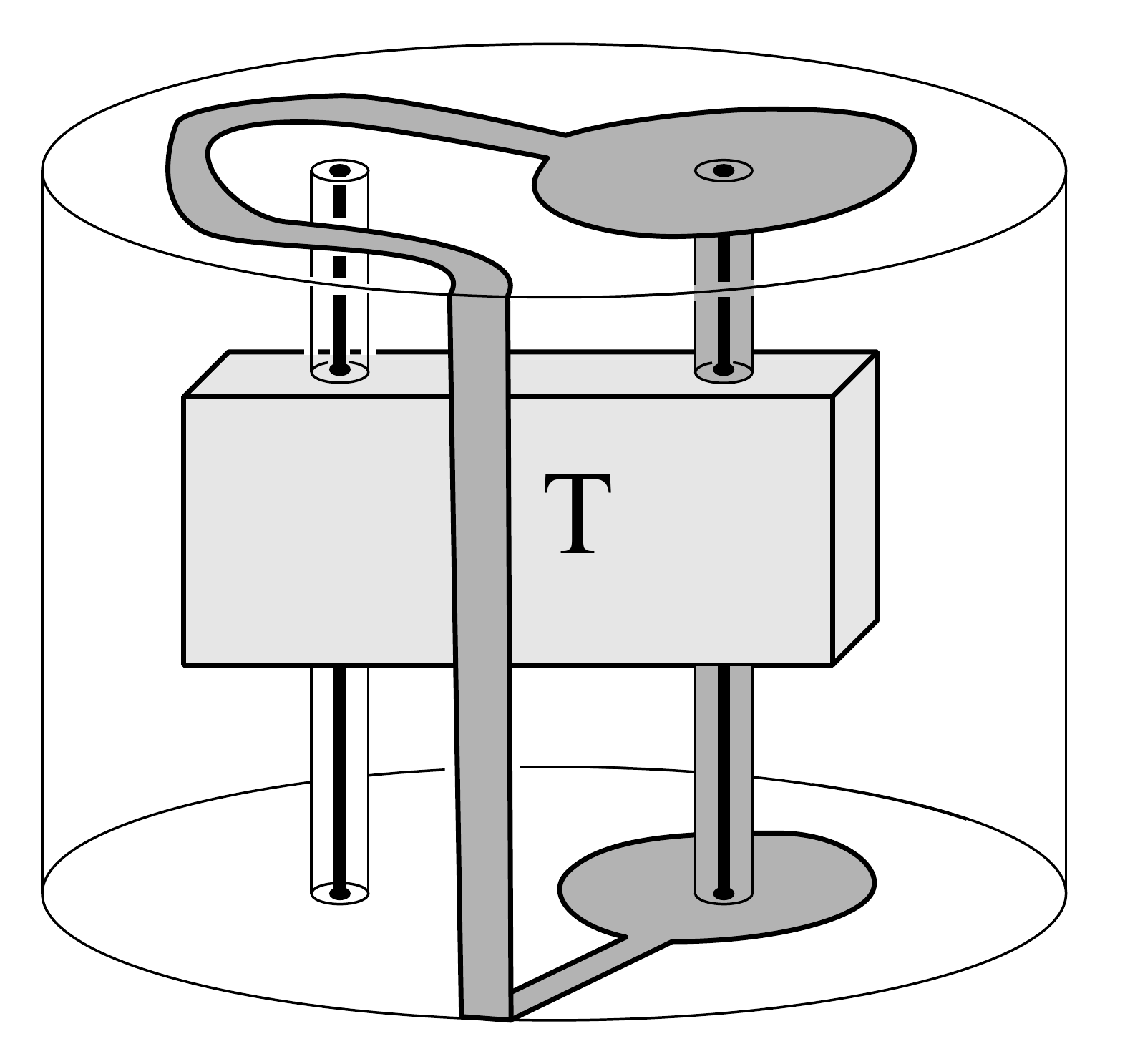}}
\subfloat[]{\includegraphics[width=2in]{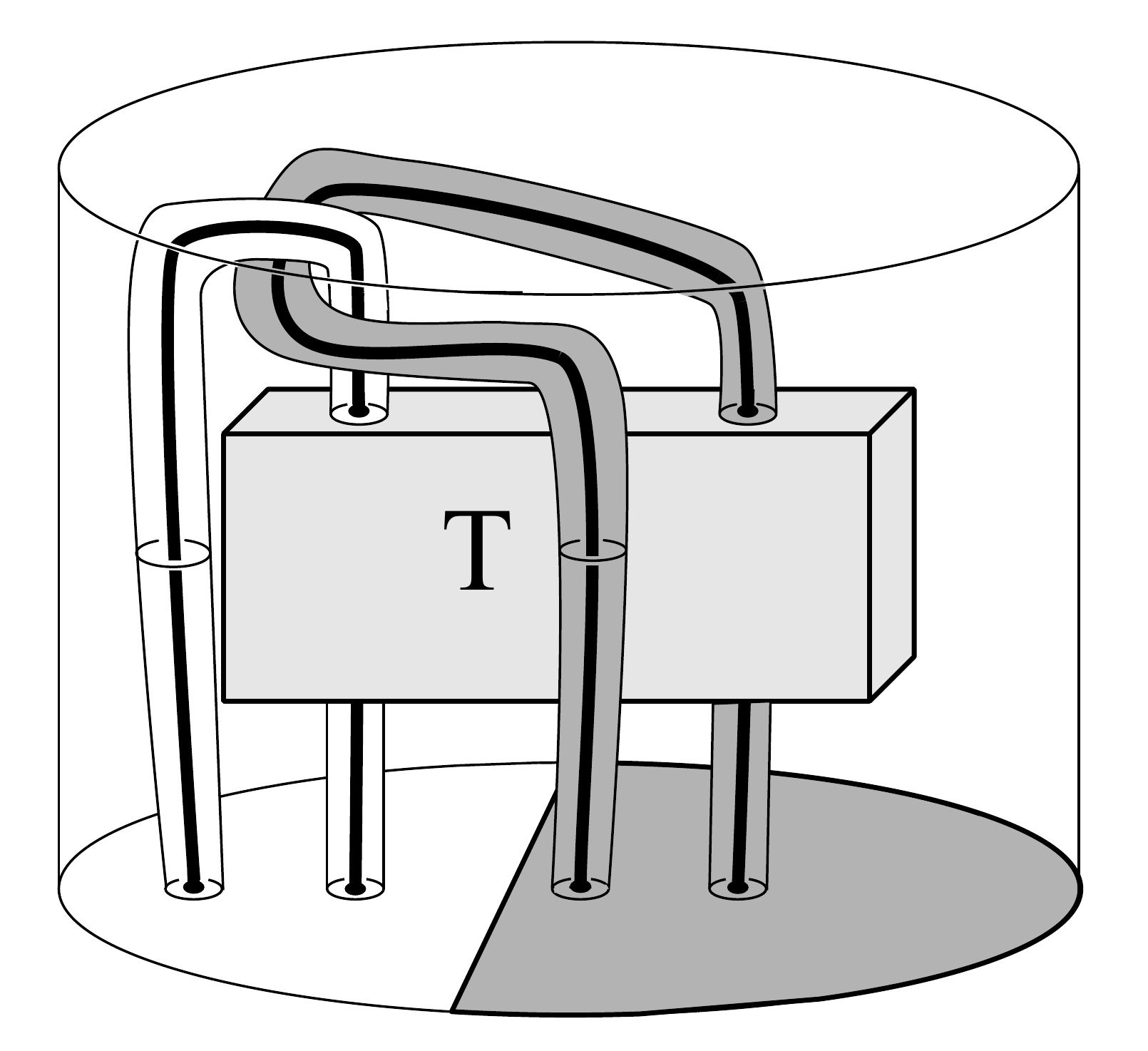}}
\caption{ Habegger's map.}\label{fig:Habbeger-map}
\end{figure}

Returning to the main discussion,
it is not immediately obvious whether $\mathrm{H}(\mathbb {SW}_n)\subset \mathbb Y_n$. However we can define the following intermediate filtration on $\mathbb{SL}$: Denote by $\SY_n$ the set of concordance classes of string links which differ from the trivial string link by a sequence of concordances and surgeries along \emph{simple} claspers of order $n$, where a
simple clasper is defined to so that its leaves bound disjointly embedded meridianal disks to the string link components.

\begin{prop}\label{prop:inclusions}
For all $n\geq 1$, we have inclusions $\SY_n\subset\mathbb {SW}_n$ and $\mathrm{H}(\SY_n)\subset \mathbb Y_n$.
\end{prop}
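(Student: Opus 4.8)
The plan is to establish the two inclusions in sequence, reducing the second to the first together with naturality of the clasper/grope constructions under the map $\mathrm{H}$. For the inclusion $\SY_n\subset \mathbb{SW}_n$, I would start from the definition: a string link $\sigma\in\SY_n$ differs from the trivial string link by concordances and surgeries along simple claspers of order $n$, whose leaves bound disjointly embedded meridianal disks to the string link components. The key point is that surgery on such a simple clasper can be traded for a Whitney tower construction in $B\times I$: this is precisely the grope-to-Whitney-tower (or clasper-to-Whitney-tower) translation used in \cite{CST,CST1}. Concretely, a simple clasper of order $n$ can be converted into a class $n$ (capped) grope concordance, and then \cite{CST} converts a class $n$ grope concordance into an order $n$ Whitney tower concordance. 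Since order $n$ Whitney tower concordance to the trivial string link is equivalent to membership in $\mathbb{SW}_n$ (as recalled in Section~\ref{sec:string-links}), and since the composition of Whitney tower concordances of order $\geq n$ is again one of order $\geq n$ (by stacking, as in the proof of Theorem~\ref{thm:nilpotent}), it follows that $\sigma\in\mathbb{SW}_n$. One has to check that the ``simple'' hypothesis on the clasper is exactly what guarantees the grope/Whitney tower stays of the correct order and is a genuine concordance (the meridianal-disk condition means the leaves are clean of order-lowering interactions with the order-zero disks), but this is the same bookkeeping as in \cite{CST1}.

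For the second inclusion $\mathrm{H}(\SY_n)\subset \mathbb{Y}_n$, I would argue geometrically using the description of $\mathrm{H}$ in Figure~\ref{fig:stringclosure}: $\mathrm{H}$ is built by removing a regular neighborhood of the tangle $T$ from a fixed $3$--ball and reparametrizing the boundary. A simple clasper $C$ of order $n$ in the complement of the string link in $B$ sits disjointly from the removed neighborhood structure, so it pushes forward to a clasper $\mathrm{H}(C)$ of order $n$ embedded in the homology cylinder $\mathrm{H}(\sigma)$, and surgery commutes with the construction: $\mathrm{H}(\sigma_C)$ is diffeomorphic to $\big(\mathrm{H}(\sigma)\big)_{\mathrm{H}(C)}$. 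Likewise a concordance of string links is carried by $\mathrm{H}$ to a homology cobordism of homology cylinders (this is already asserted in the paragraph defining $\mathrm{H}$). Therefore a sequence of concordances and order $n$ simple clasper surgeries taking the trivial string link to $\sigma$ is carried to a sequence of homology cobordisms and order $n$ clasper surgeries taking $\Sigma_{g,1}\times I = \mathrm{H}(\text{trivial})$ to $\mathrm{H}(\sigma)$; by Lemma~\ref{lem:bordism} (homology bordisms do not change the $\sim_n$-class) this exhibits $\mathrm{H}(\sigma)\in\mathbb{Y}_n$.

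I expect the main obstacle to be the first inclusion: verifying carefully that a simple order $n$ clasper surgery really produces an order $n$ (and not lower) Whitney tower \emph{concordance} rather than merely a Whitney tower bounded by the closure in $B^4$. This requires being precise about how the clasper-to-grope-to-Whitney-tower machinery of \cite{CST,CST1} interacts with the product structure $B\times I$ and with the ``simple'' (meridianal disjoint disk) hypothesis, and about the fact that Whitney tower concordances of order $n$ are closed under composition and inverse — all of which is available from \cite{CST1} and the arguments already used in the proof of Theorem~\ref{thm:nilpotent}, so the obstacle is really one of assembling known pieces correctly rather than proving something new. The second inclusion, by contrast, is essentially a naturality observation once the first is in hand and once one notes that $\mathrm{H}$ is defined by an ambient cut-and-reparametrize operation that visibly carries claspers to claspers of the same order.
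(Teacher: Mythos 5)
Your argument matches the paper's proof: the first inclusion is established by the chain simple clasper surgery $\Rightarrow$ capped grope cobordism $\Rightarrow$ Whitney tower concordance, citing \cite{CST,CT1,S1}, and the second is the observation that clasper surgery commutes with the map $\mathrm{H}$. One small bookkeeping slip: a sequence of order $n$ simple clasper surgeries corresponds to a capped grope cobordism of class $n+1$ (not class $n$), by \cite[Thm.~23]{CST}, and it is this class $n+1$ grope cobordism that produces an order $n$ Whitney tower concordance; the shift by one is the standard grope--Whitney tower dictionary, and your overall conclusion is unaffected.
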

\begin{proof}
A sequence of surgeries on simple claspers of order $n$ is the same as a $3$-dimensional capped grope cobordism of class $n+1$ by \cite[Thm. 23]{CST} (using \cite{CT1}), which gives rise to a Whitney tower concordance of order $n$, as described in  \cite[sec. 3.5]{CST} (using \cite{S1}). Thus $\SY_n\subset\mathbb {SW}_n$. On the other hand $\mathrm{H}(\SY_n)\subset \mathbb Y_n$ follows because both filtrations are defined in terms of surgery on claspers (and clasper surgery obviously commutes with the map $\mathrm{H}$).
\end{proof}

Recall from Theorem~\ref{thm:nilpotent} that the standard closure operation gives an isomorphism 
$$
\sSW_n\cong\sW_n
$$ 
between the graded groups $\sSW_n=\mathbb {SW}_n/\mathbb{SW}_{n+1}$ associated to the string link Artin filtration the group of classical links bounding order $n$ Whitney towers modulo Whitney tower concordance of order $n+1$.  The rest of this section uses algebraic results from \cite{CST1} to compare the Johnson and $Y$-filtrations (working with $\sW_n$ for ease of reference to \cite{CST1}).

Recall Habiro's surjective realization map $\theta_n:\cT_n\twoheadrightarrow{\sf Y}_n$  $(n>1)$ from Section~\ref{sec:homology-cylinders} which can just as easily be defined using only simple clasper surgeries, thinking of the identity element in the group of homology cylinders as the image of the trivial string link under $\mathrm{H}$.
  There is a surjective realization map $\cT_n\twoheadrightarrow\sSY_n$ defined in an analogous way by embedding simple claspers in the complement of the identity string link, where $\sSY_n$ is the quotient of $\SY_n$ by the equivalence relation generated by order $n+1$ simple clasper surgery and concordance. There is also a realization surjection $R_n:\cT_n\twoheadrightarrow\sW_n$
defined in \cite{CST1} based on Cochran's bing-doubling construction \cite{Co}.

The following proposition combines Levine's Theorem~\ref{thm:Levine} with the basic set-up of the Whitney tower theory
surveyed in \cite{CST0}, and serves as a starting point for this section's results.

\begin{prop}\label{prop:comm}
The following diagram is commutative for all $n>1$:

\centerline{$\xymatrix{
&&&\\
&&{\sf W}_n\ar[dr]&\\
\cT_n\ar@/^5pc/[rrr]^{\eta_n}\ar@{->>}[rru]\ar@{->>}[r]\ar@{->>}[rrd]&\sSY_n\ar@{->>}[ur]\ar@{->>}[dr]^{\mathrm{H}_n}&&{\sf D}_n\\
&&{\sf Y}_n\ar[ur]&
}$}
Hence all maps are rational isomorphisms.
\end{prop}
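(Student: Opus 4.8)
The plan is to reduce the commutativity of the whole diagram to a short list of identifications of maps \emph{out of} $\cT_n$, using that the central vertex $\sSY_n$ receives the surjection $\cT_n\twoheadrightarrow\sSY_n$. The first observation is that any map out of $\sSY_n$ is determined by its precomposition with this surjection, so to see that every face of the diagram commutes it suffices to check that the four composites
\[
\cT_n\to\sSY_n\to\sW_n,\qquad \cT_n\to\sSY_n\xrightarrow{\mathrm{H}_n}\sY_n,\qquad \cT_n\to\sW_n\to\sD_n,\qquad \cT_n\to\sY_n\to\sD_n
\]
agree respectively with $R_n$, Habiro's $\theta_n$, $\eta_n$, and $\eta_n$. (Once this is known, the two composites $\sSY_n\to\sD_n$ automatically agree, since both become $\eta_n$ after precomposing with $\cT_n\twoheadrightarrow\sSY_n$.)

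The four identifications are then handled in turn. That $\cT_n\to\sSY_n\xrightarrow{\mathrm{H}_n}\sY_n$ equals $\theta_n$ is essentially definitional: both the left realization map and $\theta_n$ send a tree $t$ to surgery on a simple clasper of tree-type $t$ — in the complement of the trivial string link, respectively in $\Sigma_{g,1}\times I$ — and $\mathrm{H}$ carries one surgery presentation to the other because it commutes with clasper surgery (as already used in the proof of Proposition~\ref{prop:inclusions}) and takes the trivial string link to $\Sigma_{g,1}\times I$. That $\cT_n\to\sSY_n\to\sW_n$ equals $R_n$ comes from the grope machinery: by \cite[Thm.~23]{CST} a presentation by order $n$ simple clasper surgeries yields a capped grope cobordism of class $n+1$, hence by \cite[\S3.5]{CST} an order $n$ Whitney tower concordance, and under this translation a clasper of tree-type $t$ produces exactly Cochran's Bing-doubling realization of $t$; combined with the isomorphism $\sSW_n\cong\sW_n$ of Theorem~\ref{thm:nilpotent} this is the $R_n$ of \cite{CST1}. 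The identity $\cT_n\to\sY_n\to\sD_n=\eta_n$ is precisely Levine's Theorem~\ref{thm:Levine}. Finally $\cT_n\to\sW_n\to\sD_n=\eta_n$ is the content of \cite{CST0,CST1}: the Whitney tower intersection theory prescribes the first non-vanishing Milnor invariants of $R_n(t)$ in terms of $t$, and assembling the order $n$ Milnor invariants into $\sD_n$ (via $\sD_n\cong\sD'_n$ from \cite{CST3}) is exactly $\eta_n$.

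Granting commutativity, the final assertion is a diagram chase after tensoring with $\Q$. The outer arrow $\eta_n$ is a rational isomorphism by Theorem~\ref{thm:Levine}. In the factorization $\eta_n=\bigl(\cT_n\twoheadrightarrow\sSY_n\twoheadrightarrow\sW_n\to\sD_n\bigr)$ the first two arrows are surjective and the composite is injective, so rationally the first arrow is injective (hence an isomorphism), then the second is injective (hence an isomorphism), and then $\sW_n\to\sD_n$ is injective; it is also surjective since $\eta_n$ is, so it is a rational isomorphism. Running the same argument along $\eta_n=\bigl(\cT_n\twoheadrightarrow\sSY_n\xrightarrow{\mathrm{H}_n}\sY_n\to\sD_n\bigr)$ shows $\mathrm{H}_n$ and $\sY_n\to\sD_n$ are rational isomorphisms, and $\cT_n\to\sW_n$, $\cT_n\to\sY_n$ are covered en route. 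Hence every arrow in the diagram is a rational isomorphism.

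The step I expect to be the main obstacle is the last identification together with the compatibility in the $R_n$ step: matching Cochran's Bing-doubling realization from \cite{CST1} with the simple-clasper/grope-to-Whitney-tower translation of \cite{CST}, and then checking that the resulting first non-vanishing Milnor invariant is given on the nose by $\eta_n$ — integrally, not merely rationally, which is why the resolution of the Levine conjecture in \cite{CST3} is invoked. These are the normalization-sensitive points where the conventions of \cite{CST,CST1,CST3} must be shown to line up; the rest is either definitional or quoted from Levine.
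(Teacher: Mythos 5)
Your proposal follows essentially the same route as the paper's proof: reduce commutativity of every face to identifications of the four composites out of $\cT_n$ (using surjectivity of $\cT_n\twoheadrightarrow\sSY_n$ to get the right-hand diamond for free), cite Levine's Theorem~\ref{thm:Levine} for $\cT_n\to\sY_n\to\sD_n=\eta_n$, cite the CST factorization $\eta_n=\mu_n\circ R_n$ for $\cT_n\to\sW_n\to\sD_n$, argue that both maps out of $\cT_n$ into $\sSY_n$ and $\sY_n$ are clasper realization (so that triangle commutes definitionally), and argue that the simple-clasper--to--grope--to--Whitney-tower translation of \cite{CST,S1} preserves tree type for the remaining triangle. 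The diagram chase for rational isomorphisms at the end is also the same content (the paper simply cites the last sentence of Theorem~\ref{thm:Levine}). One small inaccuracy to fix: you wrote ``$\sD_n\cong\sD'_n$ from \cite{CST3},'' but in fact $\sD'_{2k}$ is only a subgroup of $\sD_{2k}$ (Levine \cite{L3}); the claim $\eta_n=\mu_n\circ R_n$ lands in $\sD_n$ directly and does not rely on any such isomorphism, so the parenthetical should just be dropped. Also, the assertion that a simple clasper surgery ``produces exactly Cochran's Bing-doubling realization'' is a bit stronger than what is needed or strictly true on the nose --- what matters (and what the paper asserts) is that the two resulting string links represent the same class in $\sW_n$, because the tree-type is preserved through the clasper/grope/Whitney-tower conversions.
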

\begin{proof}
The bottom row factorization of $\eta_n$ is Theorem~\ref{thm:Levine} from Section~\ref{sec:homology-cylinders}.
The map $\sW_n\to{\sf D}_n$ is given by order $n$ Milnor invariants $\mu_n$, and the factorization $\eta_n=\mu_n\circ R_n$
is fundamental to the theory developed in \cite{CST0,CST1,CST2,CST3}. 
Commutativity of the bottom left triangle follows because the realization maps both are defined by embedding claspers. 
Commutativity of the triangle containing $\sSY_n\to\sW_n$ corresponds to the fact that the conversion from simple claspers to capped gropes to Whitney towers preserves tree type \cite{CST,S1}. Now, because $\cT_n\to  \sSY_n$ is surjective, diagram chasing is enough to show the right diamond commutes, and that the two maps emanating from $\sSY_n$ are epimorphisms. 
\end{proof}
\begin{rem}
As mentioned in the introduction, in a future paper we will give a geometric argument that the map $\sSY_n\to \sSW_n\cong \sW_n$ is an isomorphism. This gives another argument that Habiro's surgery map induces a map $\widetilde{\cT}_n\to \sY_n$. However, we have presented the direct clasper arguments in this paper because they appear to give another fruitful line of approach to the theory.

\end{rem}

To make progress understanding the analogous setting with $\Z$-coefficients, we appeal to the resolution of the Levine Conjecture in \cite{CST3} and the resulting implications described in \cite{CST1}.

As mentioned in Section~\ref{sec:homology-cylinders} above, in \cite{L2,L3} Levine studied a map $\eta'_n\colon\cT_n\to\sD'_n$, where $\sD_n'$ is the kernel of the quasi-Lie bracket map $\sL_1'\otimes\sL'_{n+1}\to\sL'_{n+2}$, which is defined by the same formula as $\eta_n$.
In \cite{L3} it is shown that ${\sf D}'_{2k}<{\sf D}_{2k}$,
and in \cite{CST3} it is shown that in all even orders the maps $\eta'_{2k}=\eta_{2k}:\cT_{2k}\to{\sf D}'_{2k}$ are isomorphisms. This implies
the following corollary to Proposition~\ref{prop:comm}.
\begin{cor}
For all $k\geq 1$, the following diagram is commutative and all maps are $\Z$-module isomorphisms:

\centerline{$\xymatrix{
&&&\\
&&{\sf W}_{2k}\ar@{>->>}[dr]&\\
\cT_{2k}\ar@/^5pc/[rrr]^{\eta'}\ar@{>->>}[rru]\ar@{>->>}[r]\ar@{>->>}[rrd]&\sSY_{2k}\ar@{>->>}[ur]
\ar@{>->>}[dr]^{H_{2k}}&&{\sf D}'_{2k}\\
&&{\sf Y}_{2k}\ar@{>->>}[ur]&
}$}
$\hfill\square$
\end{cor}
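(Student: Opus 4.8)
The plan is to deduce this corollary from Proposition~\ref{prop:comm} by feeding in the even-order computations coming from the resolution of the Levine Conjecture. First I would invoke Proposition~\ref{prop:comm} in the case $n = 2k$: this already gives the commutative diagram with all maps rational isomorphisms, together with the factorization $\eta_{2k} = \mu_{2k}\circ R_{2k}$ and the fact that the two maps emanating from $\sSY_{2k}$ are epimorphisms. So the only thing to upgrade is the statement that \emph{every} arrow is an honest $\Z$-module isomorphism, and this is where the cited results enter.

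The key input is the main theorem of \cite{CST3} (the resolution of the Levine Conjecture), which says that for all $k\geq 1$ the map $\eta'_{2k} = \eta_{2k}\colon\cT_{2k}\to\sD'_{2k}$ is an isomorphism, together with the inclusion $\sD'_{2k} < \sD_{2k}$ from \cite{L3}. I would first note that the composite $\cT_{2k}\to\sSY_{2k}\to{\sf Y}_{2k}\to{\sf D}_{2k}$ equals $\eta_{2k}$ (bottom row of Proposition~\ref{prop:comm}), but since $\eta_{2k}$ factors through $\sD'_{2k}$ we may regard the target as $\sD'_{2k}$; with this target the composite is $\eta'_{2k}$, which is an isomorphism. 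Since $\cT_{2k}\to\sSY_{2k}$ is surjective and the long composite to $\sD'_{2k}$ is injective, the surjection $\cT_{2k}\to\sSY_{2k}$ is also injective, hence an isomorphism. Then, reading along the diagram, each subsequent surjection in the factorization of the isomorphism $\eta'_{2k}$ must itself be injective (a surjection that is a left factor of an injection is an isomorphism), so $\sSY_{2k}\to{\sf W}_{2k}$, $\sSY_{2k}\to{\sf Y}_{2k}$, ${\sf W}_{2k}\to\sD'_{2k}$, and ${\sf Y}_{2k}\to\sD'_{2k}$ are all isomorphisms. This forces $H_{2k}\colon\sSY_{2k}\to{\sf Y}_{2k}$ to be an isomorphism as well, since it fits between two objects each identified with $\cT_{2k}$ and the relevant triangles commute.

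The one genuine subtlety — and I expect this to be the main obstacle — is justifying that the relevant maps really do land in $\sD'_{2k}$ rather than only in $\sD_{2k}$, i.e.\ that the ``quasi-Lie'' refinement is the correct target in even orders. This is precisely the content of the combination of \cite[Thm 2.2]{L3} (giving $\sD'_{2k}<\sD_{2k}$) and the even-order part of the Levine Conjecture proved in \cite{CST3}; once these are quoted, the diagram chase above is purely formal. I would also remark that the resulting identification ${\sf W}_{2k}\cong{\sf Y}_{2k}$ is consistent with (and in even orders already implied by) Theorem~\ref{thm:filtration}, whose top exact sequence reads $0\to{\sf Y}_{2k}\to{\sf J}_{2k}\to\sL_{k+1}\otimes\z\to 0$ — but here we obtain it as part of a single commutative package with the string-link side $\sSY_{2k}$, which is what the corollary asserts. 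No separate computation is needed; the entire proof is ``apply Proposition~\ref{prop:comm}, then use that $\eta'_{2k}$ is an isomorphism by \cite{CST3} to promote every surjection in the diagram to an isomorphism.''
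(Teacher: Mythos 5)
Your proposal is correct and is essentially the paper's (implicit) argument: the paper states the corollary with only a closing square, treating it as an immediate consequence of Proposition~\ref{prop:comm} once one quotes the even-order Levine Conjecture from \cite{CST3} ($\eta'_{2k}=\eta_{2k}\colon\cT_{2k}\to\sD'_{2k}$ is an isomorphism) and the inclusion $\sD'_{2k}<\sD_{2k}$ from \cite{L3}. Your diagram chase — surjections that factor an isomorphism must all be isomorphisms, and the map ${\sf Y}_{2k}\to{\sf D}_{2k}$ has image exactly $\sD'_{2k}$ because it is covered by the surjection from $\cT_{2k}$ — is precisely the argument the authors are gesturing at, and you correctly flag the only nontrivial point (retargeting to $\sD'_{2k}$).
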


In odd orders the situation depends on the order's parity modulo $4$: Recall from \cite[Sec.6]{CST1} that 
$\widetilde{\sD}_{2k-1}$ is defined to be the quotient of 
$\sD'_{2k-1}$ by the image under $\eta_{2k-1}':\cT_{2k-1}\to\sD'_{2k-1}$ of the framing relations (Definition~\ref{def:reduced-tree-group} above) in $\widetilde{\cT}_{2k-1}$, and the induced map $\widetilde{\eta}_{2k-1}:\widetilde{\cT}_{2k-1}\to\widetilde{\sD}_{2k-1}$ is an isomorphism for all $k\geq 1$. 

In odd orders congruent to $3$ mod $4$, we get an analogous diagram of abelian groups and isomorphisms.

\begin{cor}
For all $k\geq 1$, the following diagram is commutative and all maps are $\Z$-module isomorphisms:

\centerline{$\xymatrix{
&&&\\
&&{\sf W}_{4k-1}\ar@{>->>}[dr]&\\
\widetilde{\cT}_{4k-1}\ar@/^5pc/[rrr]^{\widetilde{\eta}_{4k-1}}\ar@{>->>}[rru]\ar@{>->>}[r]\ar@{>->>}[rrd]&\sSY_{4k-1}\ar@{>->>}[ur]\ar@{>->>}[dr]^{H_{4k-1}}&&\widetilde{\sf D}_{4k-1}\\
&&{\sf Y}_{4k-1}\ar@{>->>}[ur]&
}$} 

\end{cor}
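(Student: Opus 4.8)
The plan is to assemble this diagram exactly as the even-order and $4k-1$-specific ingredients dictate, so almost nothing new needs to be proved: it is a matter of citing the right isomorphisms and checking commutativity of a handful of triangles. First I would recall the three already-established facts. (i) By Proposition~\ref{prop:twistedtheta}, Habiro's realization map $\theta_{4k-1}\colon\cT_{4k-1}\twoheadrightarrow{\sf Y}_{4k-1}$ descends to a surjection $\widetilde{\theta}_{4k-1}\colon\widetilde{\cT}_{4k-1}\twoheadrightarrow{\sf Y}_{4k-1}$; the same argument (realizing simple claspers in the string link complement, using that framing relations are geometrically realized there by Lemma~\ref{lem:rightorder}) gives a surjection $\widetilde{\cT}_{4k-1}\twoheadrightarrow\sSY_{4k-1}$. (ii) By the discussion immediately preceding the corollary (from \cite[Sec.~6]{CST1} together with the Levine Conjecture proved in \cite{CST3}), the induced map $\widetilde{\eta}_{4k-1}\colon\widetilde{\cT}_{4k-1}\to\widetilde{\sD}_{4k-1}$ is an isomorphism, and one has $\widetilde{\sD}_{4k-1}\cong{\sf D}_{4k-1}$ in this residue class (the framing-relation quotient of $\sD'$ reproduces $\sD$ when the order is $\equiv 3\bmod 4$). (iii) By \cite[Cor.~5.18 (or the relevant statement)]{CST1}, the realization map $R_{4k-1}\colon\cT_{4k-1}\to{\sf W}_{4k-1}$ also factors through $\widetilde{\cT}_{4k-1}$ and the induced map $\widetilde{\cT}_{4k-1}\to{\sf W}_{4k-1}$ is an isomorphism, with the composite ${\sf W}_{4k-1}\to\widetilde{\sD}_{4k-1}$ (order $4k-1$ Milnor invariants) an isomorphism.

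Next I would reduce the present corollary to Proposition~\ref{prop:comm}. Every map in the hexagon of Proposition~\ref{prop:comm} is defined by embedding claspers (or by the clasper-to-grope-to-Whitney-tower conversion, which preserves tree type), so each one kills the framing relations when the order is $4k-1$: this is exactly what Proposition~\ref{prop:twistedtheta} and its string link / Whitney tower analogues assert. Hence the entire diagram of Proposition~\ref{prop:comm} in order $n=4k-1$ descends to the quotient $\widetilde{\cT}_{4k-1}$, yielding precisely the hexagon in the statement with $\cT_{4k-1}$ replaced by $\widetilde{\cT}_{4k-1}$ and $\eta_{4k-1},\,{\sf D}_{4k-1}$ replaced by $\widetilde{\eta}_{4k-1},\,\widetilde{\sD}_{4k-1}$. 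Commutativity is inherited from Proposition~\ref{prop:comm} together with the fact that $\widetilde{\cT}_{4k-1}\twoheadrightarrow\sSY_{4k-1}$ is surjective, so a diagram chase propagates commutativity of the remaining triangles, exactly as in the proof of Proposition~\ref{prop:comm}.

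Finally I would upgrade ``rational isomorphism'' to ``$\Z$-module isomorphism.'' Here is the decisive input: by (iii) the outer composite $\widetilde{\cT}_{4k-1}\to{\sf W}_{4k-1}\to\widetilde{\sD}_{4k-1}$ is an isomorphism, and by (ii) the long curved arrow $\widetilde{\eta}_{4k-1}\colon\widetilde{\cT}_{4k-1}\to\widetilde{\sD}_{4k-1}$ is an isomorphism; but these two composites coincide by commutativity. Since $\widetilde{\cT}_{4k-1}\to{\sf W}_{4k-1}$ is already known to be surjective and its composition with ${\sf W}_{4k-1}\to\widetilde{\sD}_{4k-1}$ is injective, it is injective as well, hence an isomorphism, and then ${\sf W}_{4k-1}\to\widetilde{\sD}_{4k-1}$ is an isomorphism. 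The same two-out-of-three argument run along the bottom edge, using that $\widetilde{\cT}_{4k-1}\twoheadrightarrow\sSY_{4k-1}$ and $\sSY_{4k-1}\twoheadrightarrow{\sf Y}_{4k-1}$ and ${\sf Y}_{4k-1}\to\widetilde{\sD}_{4k-1}$ are all surjective with injective total composite, forces each of them to be an isomorphism; in particular ${\sf H}_{4k-1}\colon\sSY_{4k-1}\to{\sf Y}_{4k-1}$ is an isomorphism. The only genuine subtlety — the step I expect to be the real obstacle — is verifying cleanly that $\widetilde{\sD}_{4k-1}\cong{\sf D}_{4k-1}$ and that $\widetilde{\eta}_{4k-1}$ is an isomorphism over $\Z$ in this residue class; this rests squarely on the computation of $\widetilde{\cT}_{2k-1}$ from \cite{CST3} and the exact-sequence bookkeeping in \cite[Sec.~6]{CST1}, and I would simply invoke those results rather than reprove them. $\hfill\square$
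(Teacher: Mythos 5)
Your overall framing — descend Proposition~\ref{prop:comm} to $\widetilde{\cT}_{4k-1}$ using Proposition~\ref{prop:twistedtheta} and its string-link analogue, then force isomorphisms via surjectivity together with the isomorphism $\widetilde{\eta}_{4k-1}\colon\widetilde{\cT}_{4k-1}\to\widetilde{\sD}_{4k-1}$ — matches the paper's strategy. However, there are two genuine problems.

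First, your claim in (ii) that $\widetilde{\sD}_{4k-1}\cong{\sf D}_{4k-1}$ is false. The exact sequence $0\to\z\otimes\sL_{2k+1}\to\widetilde{\cT}_{4k-1}\to\sD_{4k-1}\to 0$ (from \cite[Thms.~5.1, 6.5(ii)]{CST1}), together with the isomorphism $\widetilde{\cT}_{4k-1}\cong\widetilde{\sD}_{4k-1}$, shows that $\widetilde{\sD}_{4k-1}$ surjects onto $\sD_{4k-1}$ with nontrivial kernel $\z\otimes\sL_{2k+1}$. The framing relations do \emph{not} cancel in order $4k-1$; what \emph{is} special about $4k-1$ (versus $4k+1$) is that no unresolved higher-order Arf ambiguity intervenes, which is why all maps here are isomorphisms rather than merely epimorphisms with $2$-torsion kernel.

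Second, and more seriously, your two-out-of-three argument along the bottom edge is circular: it presupposes a commuting arrow ${\sf Y}_{4k-1}\to\widetilde{\sD}_{4k-1}$. But Proposition~\ref{prop:comm} (via Levine's Theorem~\ref{thm:Levine}) only supplies a map ${\sf Y}_{4k-1}\to\sD_{4k-1}$, not a lift to $\widetilde{\sD}_{4k-1}$. The lift exists exactly because $\widetilde{\cT}_{4k-1}\to{\sf Y}_{4k-1}$ is an isomorphism — so you cannot invoke the lift to prove that isomorphism. The actual engine is Theorem~\ref{thm:improved-filtration}: it identifies $\ker({\sf Y}_{4k-1}\to\sD_{4k-1})\cong\z\otimes\sL_{2k+1}$, and combining this with the surjection $\widetilde{\cT}_{4k-1}\twoheadrightarrow{\sf Y}_{4k-1}$, the exact sequence above, and the five-lemma yields injectivity of $\widetilde{\cT}_{4k-1}\to{\sf Y}_{4k-1}$. (The same structure — \cite[Thm.~6.1]{CST1} for the kernel of $\sW_{4k-1}\to\sD_{4k-1}$ — handles the top edge; your (iii) is plausible but you should not lean on a reference you can't locate for the one step that does the real work.) Only \emph{after} establishing these isomorphisms does one lift the $\sD$-valued maps to $\widetilde{\sD}_{4k-1}$ by composing inverses with $\widetilde{\eta}_{4k-1}$.
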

\begin{proof}

Consider the commutative diagram in Proposition~\ref{prop:comm}. The realization maps emanating from ${\cT}_{4k-1}$ are well-defined on the quotient $\widetilde{\cT}_{4k-1}$. This follows from Theorem~1.5 of \cite{CST1} for the upper map, Proposition~\ref{prop:well-defined} for the bottom map, and by an analogous argument applied to simple claspers on string links modulo concordance for the middle map. We also know, combining Theorems 5.1 and 6.5(ii) of \cite{CST1},  that there is an exact sequence
$$0\to \z\otimes\sL_{2k+1}\to \widetilde{\cT}_{4k-1}\to \sD_{4k-1}\to 0.$$ 
It follows that $\z\otimes\sL_{2k+1}$ surjects onto the kernels of both ${\sf Y}_{4k-1}\to {\sf D}_{4k-1}$ and ${\sf W}_{4k-1}\to {\sf D}_{4k-1}$. Furthermore, by Theorem~\ref{thm:improved-filtration} and  \cite[Thm. 6.1]{CST1}, the kernels of both $\sY_{4k-1}\to\sD_{4k-1}$ and $\sW_{4k-1}\to \sD_{4k-1}$ are isomorphic to $\z\otimes\sL_{2k+1}$. It follows by the $5$-lemma that the epimorphisms $\widetilde{\cT}_{4k-1}\to \sY_{4k-1}$ and $\widetilde{\cT}_{4k-1}\to \sW_{4k-1}$ are actually isomorphisms. Furthermore, the fact that $\eta_{4k-1}$ can be lifted to an isomorphism to $\widetilde{\sD}_{4k-1}$ implies the maps $\sY_{4k-1},\sW_{4k-1}\to {\sD}_{4k-1}$ can be lifted to maps, hence isomorphisms, to $\widetilde{\sD}_{4k-1}$ as well.
\end{proof}

\subsection{Proof of Theorem~\ref{thm:H}}\label{subsec:proof-thm-H}
\begin{proof}
The last two corollaries prove the first sentence of Theorem~\ref{thm:H}. The statement for $4k+1$ follows from Proposition~\ref{prop:comm} and the fact that the kernel of the map $\widetilde{\cT}_{4k+1}\to \sD_{4k+1}$ is the $2$-torsion group $\z\otimes\sL'_{2k+2}$ , which follows from Theorems 1.13 and 5.1 of \cite{CST1}.
\end{proof}

\subsection{Higher-order Arf invariants for homology cylinders}\label{subsec:higher-order-Arf-for-homology-cylinders}
Finally we consider the conjectural set-up in the order $4k+1$ case, where the higher-order Arf invariants live (for framed filtrations). Recall from \cite[Sec. 6.2]{CST1}  that the nontriviality of the higher-order Arf invariants for links is equivalent to the existence of a map $\widetilde{\mu}_{4k+1}\colon {\sf W}_{4k+1}\to\widetilde{\sf D}_{4k+1}$ such that the composition $\widetilde{\cT}_{4k+1}\to {\sf W}_{4k+1}\to\widetilde{\sf D}_{4k+1}$ is $\widetilde{\eta}_{4k+1}$. By a similar argument, Conjecture~\ref{conj:a-maps} is true if and only if there is a map $\widetilde{\mu}^{\sf Y}_{4k+1}\colon{\sf Y}_{4k+1}\to\widetilde{\sf D}_{4k+1}$ so that $\widetilde{\theta}_{4k+1}\circ \widetilde{\mu}^{\sf Y}_{4k+1}=\widetilde{\eta}_{4k+1}$. So we get a commutative diagram of conjectured isomorphisms for $k>0$:

\centerline{$\xymatrix{
&&&\\
&&{\sf W}_{4k+1}\ar@{>-->>}[dr]_{\widetilde{\mu}_{4k+1}}&\\
\widetilde{\cT}_{4k+1}\ar@/^5pc/[rrr]^{\widetilde{\eta}}\ar@{>->>}[rru]\ar@{>->>}[r]\ar@{>->>}[rrd]&\sSY_{4k+1}\ar@{>-->>}[rr]_{\widetilde{\mu}^s_{4k+1}}\ar@{>->>}[ur]\ar@{->>}[dr]^{H_{4k+1}}&&\widetilde{\sf D}_{4k+1}\\
&&{\sf Y}_{4k+1}\ar@{>-->>}_{\widetilde{\mu}^{\sf Y}_{4k+1}}[ur]&
}$}

Assuming our announced isomorphism $\sSY_{n}\cong\sW_n$, the existence of  $\widetilde{\mu}^s_{4k+1}$ is equivalent to the existence of  $\widetilde{\mu}_{4k+1}$. On the other hand, if $\widetilde{\mu}^{\sf Y}_{4k+1}$ exists, then so does $\widetilde{\mu}_{4k+1}$.  So if Conjecture~\ref{conj:a-maps} is true, it would imply the existence of higher-order Arf invariants for links.

When $k=0$, we almost have a diagram of isomorphisms, but there is an extra $\z$ in ${\sf Y}_1$ coming from the Rochlin invariant. That is, we have ${\sf Y}_1\cong {\sf W}_1\oplus\z$ and $\widetilde{\mathcal{T}}_1\cong\widetilde{\sf D}_1\cong {\sf W}_1\cong {\sSY}_1$.



\end{document}